\tikzset{mycolor/.style = {line width=1bp,color=#1}}%
\tikzset{myfillcolor/.style = {draw,fill=#1}}%
\NewDocumentCommand{\highlight}{O{blue!40} m m}{%
\draw[mycolor=#1] (#2.north west)rectangle (#3.south east);
}
\NewDocumentCommand{\fhighlight}{O{blue!40} m m}{%
\draw[myfillcolor=#1] (#2.north west)rectangle (#3.south east);
}
\LetLtxMacro\Oldfootnote\footnote
\crefname{enumi}{part}{parts}
\numberwithin{equation}{section}
\newtheorem{thm}[equation]{Theorem} 
\newtheorem{prop}[equation]{Proposition}
\crefname{prop}{Proposition}{Propositions}
\newtheorem{lemma}[equation]{Lemma} 
\newtheorem{cor}[equation]{Corollary}
\crefname{cor}{Corollary}{Corollaries}
\newtheorem{example}[equation]{Example}
\newtheorem{remark}[equation]{Remark}
\newtheorem{definition}[equation]{Definition}
\DeclareMathOperator{\gr}{gr}
\DeclareMathOperator{\Span}{Span}
\DeclareMathOperator{\lcm}{lcm}
\DeclareMathOperator{\Aut}{Aut}
\DeclareMathOperator{\Stab}{{\rm Stab}}
\newcommand{\uni}{\rm{uni}}
\newcommand{\p}{ \mathcal{B} }
\newcommand{\q}{ {\mathfrak{q} } }
\newcommand{\Mat}{ { \rm{Mat} } }
\newcommand{\GL}{ { \rm{GL} } }
\renewcommand{\k}{{\mathbb K}}
\newcommand{\Q}{{\mathbb Q}}
\newcommand{\Hom}{\mbox{\rm Hom\,}}
\renewcommand{\ker}{\mbox{\rm Ker\,}}
\newcommand{\ot}{\otimes}
\newcommand{\CC}{\mathbb{C}}
\newenvironment{smallpmatrix}
  {\left(\begin{smallmatrix}}
  {\end{smallmatrix}\right)}
\begin{document}


\begin{abstract}
We examine the graded automorphism groups
of quantum affine spaces
and classify these groups
for spaces of dimension $7$ or less.
Using permutation actions on partitions,
we investigate cases 
when the group decomposes as a product of graded automorphism groups 
of smaller dimensional spaces, and we
describe the 
groups
arising from the Kronecker tensor product of  independent quantum parameter matrices.  
\end{abstract}

\title[Stabilizing Automorphisms of Quantum Affine Space]
{Stabilizing Automorphisms of Quantum Affine Space}

\date{July 14, 2026}

\author{Ethan Jensen$^{1}$}
\address{Department of Mathematics, University of North Texas,
Denton, Texas 76203, USA}
\email{ethan.jensen@unt.edu}

\author{Anne V.\ Shepler$^{2}$}
\address{Department of Mathematics, University of North Texas,
Denton, Texas 76203, USA}
\email{ashepler@unt.edu}

\thanks{Key Words: 
automorphism group, quantum affine space, 
skew polynomial algebra, quantum polynomial ring}
\thanks{MSC2010: 16S36, 16W20, 20B25}

\thanks{1.\ Dept.\ of Mathematics,
  University of North Texas, 
  Denton, Texas, ethan.jensen@my.unt.edu}
\thanks{2.\ Corresponding author, Dept.\ of Mathematics,
  University of North Texas, Denton, Texas, ashepler@unt.edu}

\maketitle

\section{Introduction}
Determining the automorphism groups of algebras
remains a challenging task,
with difficulties 
 even in the case of commutative polynomial rings over fields.
Consider, for example,
Nagata's 1972 wildness conjecture
on the automorphism group of
$\CC[x_1, x_2, x_3]$ 
proved in 2004 by
Shestakov and Umirbaev 
\cite{shestakov2004tame},
see also Kraft~\cite{Kraft}.
Recent attention has turned to automorphism groups of noncommutative algebras viewed as coordinate rings.
We investigate the graded automorphism 
group of quantum affine spaces.
These 
are
finitely generated algebras
with each pair
of generators commuting up to 
a nonzero scalar.

The graded automorphisms
of a quantum affine space
$S_{\q}(V)$
give critical information
on the ungraded automorphisms.
Here $V\cong \k^n$ is the vector space
spanned by generators 
over a field $\k$
and 
$\q$ is a matrix 
of quantum scalars
recording the noncommutative multiplication.
Often the group of all
automorphisms
$\Aut(S_{\q}(V))$
coincides
with the group of graded automorphisms 
$\Aut_{\gr}(S_{\q}(V))$,
see Ceken, Palmieri, Wang, and Zhang~\cite[Section~3]{Ceken2016}
and
Yakimov~\cite[Corollary~3.7]{Yakimov2014}.
More generally, 
$\Aut(S_\q(V))$
is the semidirect product
of the unipotent automorphisms 
$\Aut_{\uni}(S_{\q}(V))$ with $\Aut_{\gr}(S_{\q}(V))$
unless $\q$
contains a row of all $1$'s,
see 
\cite[Lemma 3.2]{Ceken2016}.
The  groups 
$\Aut_{\gr}(S_{\q}(V))$
have been classified in
low dimensions, see
see Alev and Dumas~\cite{alev1996rigidite} 
 and Levandovskyy and Shepler~\cite{LevandovskyyShepler}.
 Explicit classifications in higher dimensions
have been lacking.
See also
 Alev and Chamarie~\cite{AlC}
and  Artamonov and  Wisbauer \cite{artamonov2001homological}.

We write the quantum affine space
as
$S_{\q}(V)=\k_\q[v_1, \ldots, v_n]$
generated by $v_1, \ldots, v_n$
with relations 
$v_j v_i = q_{ij}\, v_i v_j$ 
for $q_{ij}$
in $\k^*$ with $q_{ij}q_{ji}=1=q_{ii}$
with matrix of quantum scalars
$\q=\{ q_{ij} \}$,
also known as the skew polynomial ring.
We use the terms {\em quantum affine space} and {\em skew polynomial algebra} interchangeably; they both refer to the
algebra $S_{\q}(V)$.
Various results give the graded automorphism group in
special cases,
for example,
$\Aut_{\gr}(S_{\q}(V))
= \Aut(S_{\q}(V))
\cong (\k^\times)^n$
when the $q_{ij}$ are generic,
see 
\cite{AlC} and 
\cite{Yakimov2014}.
See \cite{AlC}
and
\cite{Ceken2016}
for the cases at the other extreme when
the quantum scalars $q_{ij}$
agree and/or are all $\pm 1$.
Bazlov and Berenstein \cite{bazlov2009noncommutative}
described
\(\Aut_{\gr}(S_{\q}(V))\)
as a product 
of subgroups 
of $\GL(V)$ with nontrivial overlap.

Jin~\cite{Jin} recently gave an elegant theorem describing  \(\Aut_{\gr}(S_{\q}(V))\)
as a semidirect product of general linear groups with a quotient of permutation groups.
We give a short and direct proof of a slightly different
description using the Splitting Lemma.
This characterization aids in a 
classification of these groups up to \(n=\dim V \leq 7\).
We give the semidirect product
structure explicitly
in terms of 
a maximal subgroup whose orbits
refine a particular partition
of 
\([n]^2 = \{(i,j): 1 \leq i,j \leq n\}\). 
Graded
automorphisms must permute
blocks of 
$\{1, \ldots, n\}$ of the same size, 
where each block indexes
identical rows of $\q$.
We leverage the preservation of block size to obtain an efficient algorithm (computationally
inexpensive) used in the classification.

We use the semidirect product structure to decompose 
\(\Aut_{\gr}(S_\q(V))\) 
as a direct product of quantum affine spaces
of potentially lower dimension
(i.e., fewer generators)
in \cref{DirectProductDecomp}
(see also \cref{ConstantOffDiagonalBlocks}).
This decomposition implies that the set of groups
\(\Aut_{\gr}(S_\q(V))\) for all $\q$ and $V$
is closed under direct products, 
see \cref{ClosedUnderDirectProducts}.
In addition, we give the graded automorphisms of 
 \(S_{\q \ot \q'}(V \ot V')\)
formed from the Kronecker product of two quantum parameter
matrices $\q$ and $\q'$
sufficiently independent
in \cref{TensorProductAutos}.

One asks which groups may arise as the graded automorphism group
of a quantum affine space.
This is determined jointly by 
the base field $\k$ and the diagonal action of 
subgroups of the symmetric group \(\mathfrak{S}_n\) on $[n]^2$.
With small assumptions on the cardinality and characteristic of $\k$, we identify several infinite families of groups that appear, namely, all groups of the form
$(\k^\times)^n \rtimes G$
for $G$ the symmetric group $\mathfrak{S}_n$,
the dihedral group $D_{2n}$,
or a cyclic group.
Every graded automorphism group is either monomial
or 
determined by monomial groups acting in lower dimension,
see \cref{EarlierTable}, so such families
are helpful for classification results.
We also explain why the stabilizing permutation group
of a graded automorphism group must appear
infinitely often in any classification,
see \cref{InfiniteFamilyHugeField,BlockPermutationTypeAppearsInfinitelyOften}.

\vspace{1ex}

\subsection*{Outline}
In \cref{SemiDirectSection}, we consider the structure
of the group of graded automorphisms
of skew polynomial rings.
We identify in \cref{InfiniteFamiliesSection} certain countably infinite families that appear as such groups. 
In \cref{DirectProductSection},
we decompose \(\Aut_{\gr}(S_\q(V))\)  
as a direct product of 
graded automorphism groups of subalgebras of $S_{\q}(V)$
generated by fewer variables,
and in 
\cref{TensorProductSection},
we consider skew polynomial rings
arising from the Kronecker product
(tensor product) of quantum parameter
matrices.
Lastly, we classify
in \cref{ClassificationSection}
the groups \(\Aut_{\gr}(S_\q(V))\)  
for \(\dim V \leq 7\).


\vspace{1ex}

\subsection*{Conventions and notation}
We fix a field $\k$ throughout
of arbitrary characteristic.
By a partition, we mean set partition
unless otherwise indicated,
and we write
$[n]$ for $\{ 1,\ldots, n \}$.
For $V\cong \k^n$ with fixed basis \(\{v_1, \ldots, v_n\}\) 
and \(B \subset [n]\), 
we write \(V_B = \Span_\k\{v_i: i \in B\}\).


\section{Quantum affine space and graded automorphisms}
\label{SemiDirectSection}

We give here different descriptions
for the graded automorphism
group of quantum affine space.

\subsection*{Quantum affine space}
 
An $n\times n$ matrix $\q=\{ q_{ij}\}$ with entries
in $\k$ is a {\em system of quantum parameters} or a {\em quantum parameter matrix}
when
    $$
    q_{ii} = q_{ij}\, q_{ji} = 1
    \quad\text{ for } 1\leq i, j\leq n
    \, .
    $$
We fix a $n\times n$ quantum parameter matrix $\q$ and
a finite dimensional vector space $V$ over \(\k\) with basis $v_1, \ldots , v_n $.
The {\em skew polynomial algebra} 
\(S_\q(V)\)
(also called the {\em quantum polynomial ring} or {\em quantum affine space}) 
associated to \(\q\)
is the $\k$-algebra generated by
$v_1, \ldots, v_n$ 
with relations \(v_jv_i = q_{ij}v_iv_j\):
\[
S_\q(V) = \k\langle v_1, \dots, v_n \rangle\ /\ 
(v_jv_i - q_{ij}v_iv_j:\ 1 \leq i,j \leq n )
\, .
\]
We view $S_\q(V)$ as a graded algebra with $\deg v_i=1$
for all $i$, see \cite{GoodearlWarfield},
and write
$\Aut_{\gr}( S_{\q}(V) )$
for the ring of graded automorphisms
of $S_{\q}(V)$, i.e., automorphisms
that preserve degree.
Every graded automorphism
defines a linear transformation
on $V$ by restriction, 
and we view $\Aut_{\gr}( S_{\q}(V) )$
as a subgroup of $\GL(n, \k)$.
Conversely, \cref{IsoConditionOnIndices} below
describes which elements of
$\GL(n, \k)$ extend to graded automorphisms
on $S_{\q}(V)$
with respect to the basis $v_1, \ldots, v_n$,
 see also
\cite{bazlov2009noncommutative}
and \cite{Jin}.
We write $h=(h_{ij})$ in $\GL(n, \k)$
for $h v_j= \sum_j h_{ij} v_i$.

\begin{lemma}
\cite[Lemma 3.2]{LevandovskyyShepler} 
\label{IsoConditionOnIndices}
A matrix
$h\in \GL(n, \k)$
lies in $\Aut_{\gr}(S_\q(V))$ if and only if
    $$
        h_{i\ell}\, h_{jm} = 0 
    \quad\text{ or }\quad 
    q_{ij} = q_{\ell m} 
    \quad\text{ for all }\ \ 1\leq i,j,m,\ell\leq n
    \, .
    $$
\end{lemma}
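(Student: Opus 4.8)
The plan is to verify the claimed equivalence directly from the defining relations of $S_{\q}(V)$. A graded automorphism $h$ is determined by its restriction to $V$, so the question is precisely: for which $h \in \GL(n,\k)$ does the assignment $v_j \mapsto h v_j = \sum_i h_{ij} v_i$ extend to a well-defined algebra endomorphism of $S_{\q}(V)$? Since $S_{\q}(V)$ is presented by the relations $v_j v_i - q_{ij} v_i v_j$, such an extension exists (and is then automatically an automorphism, because $h$ is invertible and the analogous statement holds for $h^{-1}$) if and only if $h$ carries each relator into the relation ideal. So the first step is to compute, for each pair $(j,i)$, the element
\[
(h v_j)(h v_i) - q_{ij}\,(h v_i)(h v_j)
= \sum_{\ell, m} h_{\ell j} h_{m i}\, v_\ell v_m - q_{ij} \sum_{\ell,m} h_{\ell i} h_{m j}\, v_\ell v_m
\]
inside $S_{\q}(V)$ and ask when it vanishes.

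Next I would rewrite everything in the PBW basis $\{v_1^{a_1}\cdots v_n^{a_n}\}$ of $S_{\q}(V)$, using $v_m v_\ell = q_{m\ell}^{-1} v_\ell v_m = q_{\ell m} v_\ell v_m$ to move all factors into increasing index order. Collecting the coefficient of the basis monomial $v_\ell v_m$ for $\ell < m$ (and separately $\ell = m$, which turns out to give no constraint), one finds that the relation holds in $S_{\q}(V)$ if and only if for every $\ell < m$,
\[
h_{\ell j} h_{m i} + q_{\ell m} h_{m j} h_{\ell i} - q_{ij}\,h_{\ell i} h_{m j} - q_{ij} q_{\ell m} h_{m i} h_{\ell j} = 0,
\]
i.e. $(1 - q_{ij} q_{\ell m}) h_{\ell j} h_{m i} = -(q_{\ell m} - q_{ij}) h_{m j} h_{\ell i}$, and symmetrically with the roles of $\ell,m$ swapped. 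Since $q_{ij} q_{\ell m} = 1$ exactly when $q_{\ell m} = q_{ij}^{-1}$, a short case analysis on whether $q_{ij} = q_{\ell m}$, $q_{ij} = q_{\ell m}^{-1}$, or neither, shows this system of equations (ranging over all $\ell,m$) is equivalent to the condition: for all indices, $h_{i\ell} h_{jm} = 0$ or $q_{ij} = q_{\ell m}$. (One must be a little careful matching up index names with the statement's convention $h v_j = \sum_i h_{ij} v_i$, but it is bookkeeping.)

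Finally I would note that this gives the condition for $h$ to define an endomorphism; to get an automorphism one checks that when $h$ satisfies the condition so does $h^{-1}$ — this is immediate since the condition is visibly symmetric in a way compatible with inversion, or alternatively one simply invokes that an injective graded endomorphism of $S_{\q}(V)$ that is surjective in degree $1$ is an automorphism. The main obstacle I anticipate is the case analysis in the middle step: one needs to be sure that the vanishing of the degree-two part really does force the pointwise condition $h_{i\ell}h_{jm} = 0$ or $q_{ij} = q_{\ell m}$ \emph{for every quadruple}, rather than some weaker aggregate condition — the point is that distinct PBW monomials are linearly independent, so the coefficient of each $v_\ell v_m$ must vanish on its own, and then the coupling between the $(\ell,m)$ and $(m,\ell)$ equations (via $q_{\ell m}$) has to be disentangled to isolate the stated dichotomy. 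Since this is exactly \cite[Lemma 3.2]{LevandovskyyShepler}, I would in fact just cite that source and include the above only as a sketch for the reader's convenience.
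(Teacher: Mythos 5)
First, a point of comparison: the paper offers no proof of this lemma at all --- it is imported verbatim as \cite[Lemma 3.2]{LevandovskyyShepler} --- so your stated fallback of simply citing that source is exactly what the paper does. Your supplementary sketch (extend $h$ to an endomorphism by checking that each relator $v_jv_i - q_{ij}v_iv_j$ is preserved, expand in the PBW basis, and extract the dichotomy from the vanishing of the degree-two coefficients) is the natural and presumably intended route. Your coefficient computation is correct. However, as a proof the sketch has a genuine gap in its central step, plus one smaller error.

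The smaller error: the diagonal terms $\ell=m$ do give a constraint. The coefficient of $v_\ell^2$ in $(hv_j)(hv_i)-q_{ij}(hv_i)(hv_j)$ is $h_{\ell i}h_{\ell j}(1-q_{ij})$, whose vanishing is precisely the instance of the dichotomy with repeated target index (via $q_{\ell\ell}=1$); your $n=2$ example with $q_{12}=-1$ and $h$ having a column with two nonzero entries already fails here. The genuine gap is in the claim that ``a short case analysis'' shows the system of coefficient equations is equivalent to the pointwise condition. It is not, equation by equation. Writing $A=h_{\ell j}h_{m i}$, $B=h_{\ell i}h_{m j}$, $q=q_{ij}$, $p=q_{\ell m}$, your equation is $A(1-qp)+B(p-q)=0$, and the equation obtained by swapping $i\leftrightarrow j$ (or $\ell\leftrightarrow m$) is a scalar multiple of it, so each unordered source pair and unordered target pair contributes only this one relation. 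When $q=1$ it degenerates to $(A-B)(1-p)=0$, which is satisfied whenever the $2\times 2$ minor of $h$ on rows $\{\ell,m\}$ and columns $\{i,j\}$ vanishes --- with all four entries nonzero and $p\neq q$ --- so the dichotomy does \emph{not} follow from this relator alone. Indeed, a singular matrix (e.g.\ $hv_3=hv_4=v_1+v_2$, $hv_1=hv_2=0$, with $q_{34}=1\neq q_{12}$) preserves every relator while violating the dichotomy, so any correct proof of the forward direction must use the invertibility of $h$ inside the case analysis, not merely at the end to upgrade ``endomorphism'' to ``automorphism'' as your sketch does. Concretely, one must argue that if $h_{\ell i}h_{m j}\neq 0$ with $q_{\ell m}\neq q_{ij}$, then combining the coefficient equations for \emph{other} relators (chosen using a nonvanishing $2\times 2$ minor on rows $\ell,m$, which exists because $h$ is invertible) with the diagonal constraints produces a contradiction. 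This propagation argument is the actual content of the lemma and is missing from the sketch; you correctly flag the worry but do not resolve it.
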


\vspace{1ex}

\subsection*{Monomial automorphisms}  
    Let \(\mathbb{G}_n\) denote the group of monomial matrices 
    in $\GL(n, \k)$, i.e., invertible matrices 
    with exactly \(n\) nonzero entries. 
    Then \(\mathbb{G}_n\) 
    admits a decomposition
    $(\k^\times)^n \rtimes \mathfrak{S}_n$
    for \((\k^\times)^n\) identified with the diagonal matrices in \(\GL(n, \k)\)
    and the symmetric group \(\mathfrak{S}_n\) identified with the permutation matrices
    acting on \(V\) by permutation of basis
    elements $v_i$.
    The group \(\mathfrak{S}_n\) 
    acts on the set of \(n \times n\) 
    quantum parameter matrices by $\sigma: \q  \mapsto \q'$ 
    where \( q'_{ij} = q_{\sigma(i)\,\sigma(j)}\).
    A monomial matrix
    $h = d\, \sigma$
    for
    $d$ in $\GL(n, \k)$ diagonal and $\sigma$ in $\mathfrak{S}_n$
    lies in $\Aut_{\gr}(S_\q(V))$ exactly when 
    $\q$ is invariant under this action of $\sigma$,
     in which
    case we call $h$ a {\em monomial automorphism}.

\vspace{1ex}

\begin{remark}
\label{WhenMonomialGroupsArise}
{\em
Note that    \(\Aut_{\gr}(S_\q(V))\) contains only monomial automorphisms if and only if \(\q\) has no identical rows, see \cite[Lemma 3.5e]{KKZ}.
This implies that
if $q_{ij}\neq 1$ for all $i\neq j$,
then $\Aut_{\gr}(S_\q(V))$
is a monomial group of matrices,
see \cite[Lemma 3.4]{SheplerUhl}, \cite[Lemma 3.2]{Ceken2016}, and \cite[Prop 3.9]{Ceken2016}.
    }
\end{remark}

\vspace{1ex}

\begin{remark}\label{SemiDirectProducts}
{\em 
We will use the semi-direct product structure
arising from the Splitting Lemma:
If $\pi: G\rightarrow G'$ and
$\iota:G'\rightarrow G$ are group homomorphisms
with $\pi\iota=\text{Id}$,
then the exact sequence
$1\rightarrow \ker\pi \hookrightarrow
G \xrightarrow{ \pi } G' \rightarrow 1$
of groups splits and 
$$G\ \cong\ \ker \pi \rtimes G'$$
under the map $g \mapsto (g \, \iota \pi (g^{-1}), \pi(g))$
with multiplication
$
    (g, \sigma)(h, \tau) 
    =
    \big(g\, \iota(\sigma)\, h\, \iota(\sigma)^{-1},\ \sigma\tau\big)
$.
}
\end{remark}

\vspace{1ex}

\subsection*{Structure of Graded Automorphism Group}
 Jin~\cite{Jin} elegantly presented 
 the graded automorphism group
 $\Aut_{\gr}(S_\q(V))$
as a
semidirect product.
We 
 give a short proof
of an equivalent structure
using the Splitting Lemma for groups
(see~\cref{SemiDirectProducts}).
We use this approach to
decompose the group of graded automorphisms in
terms of potentially smaller graded automorphism groups,
see
\cref{ConstantOffDiagonalBlocks}, \cref{DirectProductDecomp},
and \cref{TensorProductAutos}.
Our formulation allows us to
directly
leverage the fact that
the acting permutation group
permutes blocks of the same size,
making it easy to express
as a subgroup of a product of
symmetric groups 
read off immediately from the matrix $\q$.
Using this fact, we recharacterize 
the graded automorphism group in terms of maximal subgroups of a permutation
group with an orbit condition.

The graded automorphism group is a semidirect product of two types
of automorphisms:
The first preserves the subspaces of $V$ 
spanned by basis elements 
corresponding to identical rows in $\q$
and the second permutes these subspaces. 
We fix an \(n \times n\) quantum parameter matrix \(\q\)
defining $S_\q(V)$
and make heavy use of
a partition from \cite[Definition 3.1]{KKZ}:

\vspace{1ex}

\begin{definition}
{\em
    Let \(\p_\q\) be the partition of \([n] = \{1, \ldots ,n\}\) with \(i \sim j\) if
    the $i$-th and $j$-th rows of \(\q\) are identical, i.e., \(q_{im} = q_{jm}\) for all \(m\),
    in which case
    we say $i$ and $j$ {\em lie in the same block
    of $\q$}.
    }
\end{definition}

\vspace{1ex}

We decompose \(V\) as a direct sum $V = \bigoplus_{B \in \p_\q}V_B$ for
\(V_B = \Span\{v_i: i \in B\}\).
We identify $\GL(V_B)$
with the subgroup of $\GL(V)$
that acts via $\GL(V_B)$ on $V_B$ 
and fixes basis elements
$v_i$ with $i$ not in $B$.
Kirkman, Kuzmanovich, and Zhang \cite[Lemma 3.2]{KKZ} show that 
the groups \(\GL(V_B)\) are subgroups of \(\Aut_{\gr}(S_\q(V))\)
for all blocks \(B \in \p_\q\).
%
For \(B,C \subset [n]\), we
     consider the  minor matrix of size $|B|\times |C|$
     \begin{equation}\label{BlockMinorDefn}
              \q_{_{BC}} = (q_{ij})_{i \in B,\, j \in C}\, . 
 \end{equation}
Note that this minor has all entries identical 
whenever $B$ and $C$ are blocks of $\q$
and
 that $\q_{_{BB}}$ is a quantum parameter matrix for any \(B \subset [n]\). 
For $r=|\p_\q|$,
we identify the symmetric group \(\mathfrak{S}_r\)
with the group of permutations of
 the blocks of $\q$ so that $\mathfrak{S}_r$
acts on $\p_\q$.

For $\sigma$ in $ \mathfrak{S}_r$ preserving block size,
i.e., with $|\sigma(B)| = |B|$
for all blocks $B$ of $\q$,
let $\sigma \q$ be the $n\times n$
matrix whose minor matrices are given by
(compare with \cite{bazlov2009noncommutative})
\[
    (\sigma \q)_{_{BC}} = \q_{_{\sigma(B)\, \sigma(C)}}\ \quad\text{ for all } B,C \in \mathcal{\p_\q}
    \, .
\]
As $\q$ is a quantum parameter matrix, 
so is $\sigma\q$.
We consider the subgroup of permutations
that fix $\q$:
\begin{definition}
\label{BlockPermutationType}{\em 
The {\em stabilizing permutation group}
of $\Aut_{\gr}(S_\q(V))$
   is the group of permutations
    $$\begin{aligned}
    \Stab (\q)  
     &=
    \{\sigma \in \mathfrak{S}_r:\ 
    \sigma\text{ preserves block size and } 
    \sigma \q = \q \}\\
    & = 
    \{\sigma \in \mathfrak{S}_r:\ 
    \q_{_{B C}} 
    = 
    \q_{_{\sigma(B)\, \sigma(C)}}
    \text{ for all blocks $B,C$ of }
    \q \} \, .
       \end{aligned}
$$
}
\end{definition}

We view $\Stab(\q)$
as the {\em stabilizing automorphisms}
of quantum affine space under an injection 
recorded with the
next lemma.

\begin{lemma}\label{BlockAutoInjection}
    There exists an injective group homomorphism
    $\iota: \Stab (\q) \rightarrow \Aut_{\gr}(S_\q(V))
    $
    with image
   \(\prod_{B \in \p_\q}\Hom(V_B, V_{\sigma(B)})\).
 \end{lemma}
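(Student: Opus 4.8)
The plan is to lift each block permutation $\sigma \in \Stab(\q)$ to an honest permutation of the basis $v_1, \dots, v_n$, using a \emph{canonical} (order-preserving) identification of each block with its image. Since every $\sigma \in \Stab(\q)$ preserves block size, each block $B \in \p_\q$ admits a unique order-preserving bijection onto $\sigma(B)$; declaring $\hat\sigma(j)$ to be the $k$-th smallest element of $\sigma(B)$ whenever $j$ is the $k$-th smallest element of $B$ assembles these into a permutation $\hat\sigma$ of $[n]$. I would then set $\iota(\sigma)$ to be the permutation matrix of $\hat\sigma$, acting by $v_j \mapsto v_{\hat\sigma(j)}$. By construction $\iota(\sigma)$ carries $V_B$ isomorphically onto $V_{\sigma(B)}$ for each $B$, so it is the distinguished element of $\prod_{B \in \p_\q}\Hom(V_B, V_{\sigma(B)})$, and letting $\sigma$ range over $\Stab(\q)$ recovers the image described in the statement.

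Next I would verify that $\sigma \mapsto \iota(\sigma)$ is a group homomorphism. This is exactly the step where the order-preserving choice is needed: tracking the $k$-th smallest element of a block $B$ first through $\tau$ and then through $\sigma$ gives $\widehat{\sigma\tau} = \hat\sigma \circ \hat\tau$, hence $\iota(\sigma\tau) = \iota(\sigma)\,\iota(\tau)$ as permutation matrices; a less canonical system of block bijections would in general fail to compose. Injectivity is then immediate, since $\hat\sigma = \id$ forces $B$ and $\sigma(B)$ to share their $k$-th smallest element for every $k$, whence $\sigma(B) = B$ for all blocks and $\sigma = \id$ in $\mathfrak{S}_r$.

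The remaining point is that $\iota(\sigma)$ actually lands in $\Aut_{\gr}(S_\q(V))$, which I would check against the criterion of \cref{IsoConditionOnIndices}. Writing $h = \iota(\sigma)$, we have $h_{i\ell} = \delta_{i,\,\hat\sigma(\ell)}$, so a product $h_{i\ell}h_{jm}$ is nonzero only if $i = \hat\sigma(\ell)$ and $j = \hat\sigma(m)$. If $\ell$ lies in the block $B$ and $m$ in the block $C$, then $\hat\sigma(\ell)\in\sigma(B)$ and $\hat\sigma(m)\in\sigma(C)$, so $q_{ij}$ is an entry of the minor $\q_{\sigma(B)\,\sigma(C)}$; since $B$ and $C$ are blocks of $\q$ this minor is constant and equals, because $\sigma\q = \q$, the constant minor $\q_{BC}$, whose value is $q_{\ell m}$. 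Hence $q_{ij}=q_{\ell m}$ and the condition of \cref{IsoConditionOnIndices} is satisfied, so $\iota(\sigma) \in \Aut_{\gr}(S_\q(V))$.

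I do not expect a genuine obstacle here: the substance is the bookkeeping that makes $\iota$ well defined and multiplicative — namely the functorial, order-preserving choice of block bijections — together with the one-line application of \cref{IsoConditionOnIndices}. The only real pitfalls are keeping the convention for $\mathfrak{S}_r$ acting on blocks consistent with that for $\mathfrak{S}_n$ acting on basis vectors, and not forgetting that the minors $\q_{BC}$ indexed by blocks $B$ and $C$ of $\q$ have all entries equal.
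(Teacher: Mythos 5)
Your proposal is correct and follows essentially the same route as the paper: the order-preserving lift of $\sigma$ to a permutation of the basis, the composition/uniqueness argument for multiplicativity, and the one-line check against \cref{IsoConditionOnIndices} using the constancy of the block minors and $\sigma\q=\q$. The only (harmless) additions are your explicit injectivity check and the index-level bookkeeping $\widehat{\sigma\tau}=\hat\sigma\circ\hat\tau$, which the paper instead gets from the uniqueness of the order-preserving map.
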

\begin{proof}
    For $\sigma$ in $\text{Stab}(\q)$,
   let
    \( \iota(\sigma)\) be the unique invertible linear map on $V$ which
    permutes the basis elements $v_i$ of $V$, sends $V_B$ to $V_{\sigma(B)}$ for each block \(B\) of \(\q\), and preserves the order 
    $v_1< v_2< \cdots < v_n$ 
    within each block, using 
    \(|B| = |\sigma(B)|\), so that
    \(\iota(\sigma)\) lies in \(\prod_{B \in \p_\q}\Hom(V_B, V_{\sigma(B)})\). 
    \cref{IsoConditionOnIndices} implies that \(\iota(\sigma)\) lies in \(\Aut_{\gr}(S_\q(V))\)
    since if
    \(\iota(\sigma)(v_\ell) = v_i\)
and \(\iota(\sigma)(v_m) = v_j\),
then
    \(q_{\ell m} = q_{i j}\)   
    as $ q_{_{BC}} = q_{_{\sigma(B)\sigma(C)}}
    $
    for  $\ell,m$ in respective blocks $B,C$ of $\q $.
    Finally, to see that \(\iota \) is a homomorphism,
    consider $\sigma'$ in $\text{Stab}(\q)$.
    The composition $\iota(\sigma)\iota(\sigma')$ 
    also permutes basis vectors of $V$ and preserves the order in each block,
    so by uniqueness, it agrees with
    $\iota(\sigma\sigma')$.
     \end{proof}

\begin{prop}\label{BlockAutoProjection}
    There exists a surjective group homomorphism \(\pi: \Aut_{\gr}(S_\q(V)) \rightarrow \Stab (\q)\)
    with 
    $\ker\pi = \prod_{B\in \p_\q}\GL(V_B) \)
    and $\pi\, \iota = \text{Id}$
    for $\iota$ the map of \cref{BlockAutoInjection}.
\end{prop}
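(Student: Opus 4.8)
The goal is to build a surjection $\pi$ from $\Aut_{\gr}(S_\q(V))$ onto $\Stab(\q)$ with kernel $\prod_{B\in\p_\q}\GL(V_B)$ that retracts the injection $\iota$ of \cref{BlockAutoInjection}. The natural approach is to show that \emph{every} graded automorphism $h$ permutes the subspaces $V_B$ for $B\in\p_\q$: that is, $h(V_B)=V_{\sigma(B)}$ for some permutation $\sigma=\sigma_h$ of the blocks, and then set $\pi(h)=\sigma_h$. The bulk of the work is establishing this permutation property and checking it is compatible with the action on $\q$ (so that $\sigma_h$ actually lands in $\Stab(\q)$), after which surjectivity and the splitting identity $\pi\iota=\mathrm{Id}$ are immediate.

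\textbf{Step 1: $h$ permutes the spaces $V_B$.} Fix $h=(h_{ij})\in\Aut_{\gr}(S_\q(V))$ and fix a block $B\in\p_\q$. I would show $h(v_j)\in V_{B'}$ for a single block $B'$, for each $j\in B$. Pick $j\in B$ with $h_{ij}\neq 0$, so $i$ lies in some block $B'$. For any other nonzero entry $h_{\ell j}$ in column $j$, \cref{IsoConditionOnIndices} applied with $m=j$ forces $q_{i j}=q_{\ell j}$ — and more strongly, applying it with $m$ ranging over all of $[n]$ (using that for each $m$ there is some row $k$ with $h_{km}\neq 0$, since $h$ is invertible, hence a relation between $q_{im}$ and $q_{\ell m}$ via a common third index), one deduces that rows $i$ and $\ell$ of $\q$ are identical, i.e.\ $\ell\in B'$. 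Thus $h(v_j)\in V_{B'}$. Running this over $j\in B$ and using that the blocks are an equivalence and $h$ is bijective, one gets a well-defined bijection $\sigma_h$ on $\p_\q$ with $h(V_B)=V_{\sigma_h(B)}$; comparing dimensions forces $|\sigma_h(B)|=|B|$, so $\sigma_h$ preserves block size. The clean way to package ``rows $i$ and $\ell$ agree'' may instead be to invoke \cref{IsoConditionOnIndices} in the symmetric form on $h$ together with $h^{-1}$ (also a graded automorphism), extracting that the nonzero pattern of $h$ is block-diagonal-up-to-permutation with respect to $\p_\q$; I would pick whichever phrasing is shortest.

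\textbf{Step 2: $\sigma_h\in\Stab(\q)$ and $\pi$ is a homomorphism.} Given $h(V_B)=V_{\sigma_h(B)}$ and $h(V_C)=V_{\sigma_h(C)}$, take $\ell\in B$, $m\in C$ with $h_{i\ell}\neq 0$, $h_{jm}\neq 0$; then $i\in\sigma_h(B)$, $j\in\sigma_h(C)$, and \cref{IsoConditionOnIndices} gives $q_{\ell m}=q_{ij}$. Since $\q_{_{BC}}$ and $\q_{_{\sigma_h(B)\sigma_h(C)}}$ are constant minors (as $B,C,\sigma_h(B),\sigma_h(C)$ are blocks), this says $\q_{_{BC}}=\q_{_{\sigma_h(B)\sigma_h(C)}}$ for all blocks, i.e.\ $\sigma_h\in\Stab(\q)$. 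That $h\mapsto\sigma_h$ respects composition is automatic from $h_2h_1(V_B)=h_2(V_{\sigma_{h_1}(B)})=V_{\sigma_{h_2}\sigma_{h_1}(B)}$ and uniqueness of the permutation, so $\pi$ is a group homomorphism.

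\textbf{Step 3: kernel, surjectivity, and the splitting.} The kernel of $\pi$ consists of those $h$ with $\sigma_h=\mathrm{id}$, i.e.\ $h(V_B)=V_B$ for all $B$; by Step 1 such $h$ is block-diagonal, hence lies in $\prod_{B\in\p_\q}\GL(V_B)$, and conversely every such block-diagonal invertible matrix is a graded automorphism by \cite[Lemma 3.2]{KKZ} (cited in the excerpt) and visibly has $\sigma_h=\mathrm{id}$; so $\ker\pi=\prod_{B\in\p_\q}\GL(V_B)$. For surjectivity and the retraction: given $\sigma\in\Stab(\q)$, \cref{BlockAutoInjection} produces $\iota(\sigma)\in\Aut_{\gr}(S_\q(V))$ with $\iota(\sigma)(V_B)=V_{\sigma(B)}$, so by uniqueness $\sigma_{\iota(\sigma)}=\sigma$, giving both $\pi\iota=\mathrm{Id}$ and surjectivity of $\pi$.

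\textbf{Main obstacle.} The only real content is Step 1 — upgrading the pairwise vanishing/equality dichotomy of \cref{IsoConditionOnIndices} into the statement that the support of $h$ is ``block-permutation-shaped.'' The subtlety is that \cref{IsoConditionOnIndices} only compares two columns at a time via a shared index $m$, so one must feed in invertibility of $h$ (every column, and every row, has a nonzero entry) to chain these comparisons across all $m$ and conclude entire rows of $\q$ coincide. I expect the cleanest route is to simultaneously use that $h^{-1}$ is also a graded automorphism, which symmetrizes the argument and avoids an ad hoc chaining lemma; everything after Step 1 is formal.
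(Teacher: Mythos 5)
Your proposal follows essentially the same route as the paper: define $\pi(h)$ by tracking which block contains the support of the columns of $h$ indexed by a given block, use \cref{IsoConditionOnIndices} together with invertibility of $h$ (every row and every column has a nonzero entry) to chain the pairwise comparisons into the statement that whole rows of $\q$ coincide, and then read off the homomorphism property, the kernel, and $\pi\iota=\mathrm{Id}$ formally. One step is under-justified as written. In Step 1 you prove that the support of a \emph{single} column $j$ lies in one block $B'$, but for $\sigma_h$ to be well defined on $\p_\q$ you also need that two columns $j,j'$ lying in the \emph{same} block $B$ have supports in the same block; ``the blocks are an equivalence and $h$ is bijective'' does not give this (bijectivity is consistent with the two columns landing in two different blocks of the same total size). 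The fix is one more pass of your own chaining argument: for $h_{ij}\neq 0$, $h_{\ell j'}\neq 0$ and any $k$, pick $m$ with $h_{km}\neq 0$ to get $q_{ik}=q_{jm}$ and $q_{\ell k}=q_{j'm}$, and then use $q_{jm}=q_{j'm}$ (rows $j$ and $j'$ of $\q$ are identical since $j\sim j'$) to conclude $q_{ik}=q_{\ell k}$ for all $k$; this is exactly what the paper's proof does. Aside from that (and a harmless index slip in your parenthetical, where the entries of $\q$ being compared sit in the column indexed by the auxiliary \emph{row} index, not by $m$), the argument is sound, and your alternative of invoking $h^{-1}$ to get bijectivity of $\sigma_h$ on blocks is a legitimate substitute for the paper's determinant count.
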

\begin{proof}
For \(h =( h_{ij} ) \in \Aut_{\gr}(S_\q(V))\), 
define a function 
    $\pi(h): \p_\q \rightarrow \p_\q$ 
    which permutes blocks by
    $$ \pi(h)(B) = \{i:\, h_{ij} \neq 0 \textup{ for some } j \in B\}
    \, .
    $$
We argue $\pi(h)$ has the advertised codomain.
First note that $\pi(h)(B)$ is a subset
of a block for each block $B$.
Indeed, for $i$ and $\ell$ in $\pi(h)(B)$,
$h_{ij}\neq 0 \neq h_{\ell m}$
for some $j,m\in B$.
To see that $i$ and $\ell$ lie in the same block, 
fix an index $s$ and find $t$ with $h_{st}\neq 0$.
By \cref{IsoConditionOnIndices},
$q_{is}= q_{jt}$ 
as $h_{ij} h_{st}\neq 0$,
$q_{jt} = q_{mt}$ as $j$ and $m$ lie in the same block,
and $q_{mt} = q_{\ell s}$
as $h_{\ell m} h_{st} \neq 0$.
A similar argument shows that
$\pi(h)(B)$ is an entire block.
Indeed, if $\pi(h)(B)\subset C$ for $C$ a block
and $\ell\in C$, take
$m$ with $h_{\ell m}\neq 0$ and
$i\in \pi(h)(B)$ so that $h_{ij}\neq 0$
for some $j\in B$. Then for all $t$,
and $h_{st}\neq 0$,
$q_{jt}=q_{is}=q_{\ell s}=q_{mt}$
(using $h_{ij}h_{st}
\neq 0 \neq 
h_{\ell m} h_{st}$)
and hence rows $j$ and $m$ of $\q$ are identical.
Thus $m\in B$ and $\ell \in \pi(h)(B)$.

Note that $\pi(h)$ takes each block of $\q$
 to a block of the same size.
Otherwise, some block would be sent to a smaller block, and, after reindexing,
$h$ would be a block matrix with all entries $0$ above and below a
non-square block
(minor submatrix)
 forcing $\det h=0$.
Next notice that for $i \in B'=\pi(h)(B)$
and $\ell \in C'=\pi(h)(C)$,
there exist $j \in B$ and $m \in C$ with $h_{ij} h_{\ell m}\neq 0$. Then $q_{i\ell} = q_{j m}$ by \cref{IsoConditionOnIndices}, so $\q_{_{BC}} = \q_{_{B' C'}}$.

    To see that 
$\pi: \Aut_{\gr}(S_\q(V))\rightarrow \Stab(\q)$, 
$h\mapsto \pi(h)$,
is a group homomorphism, fix
    $g, h$ in $\Aut_{\gr}(S_\q(V))$
    and 
    \(i \in \pi(gh)(B)\). 
    Then \((gh)_{ij} \neq 0\) for some \(j \in B\) and so 
    $g_{i\ell} \neq 0 \neq h_{\ell j}$
    for some $\ell$.
    Then $\ell \in \pi(h)(B)$ and so 
    $i \in \pi(g)(\pi(h)(B))$. Thus, \(\pi(gh)(B) = \pi(g)\pi(h)(B)\).
A quick check confirms that
$\pi \, \iota = \text{Id}$
and that $\pi$ has the kernel claimed.
\end{proof}

\vspace{1ex}

\subsection*{Semidirect product structure}
We obtain a semidirect product 
    by letting $\Stab (\q)$ act
    on $\prod_{B \in \p_\q}\GL(V_B)$
    by conjugation via the map of
    \cref{BlockAutoInjection}:
    For $g, h$ in $
    \prod_{B \in \p_\q}\GL(V_B)
    \) and $\sigma, \tau$ in $ \Stab (\q)$,   
    \[
    (g, \sigma)(h, \tau) 
    \ =\
    \big(g\, \iota(\sigma)\, h\, \iota(\sigma)^{-1},\ \sigma\tau\big)
    \qquad\text{ in }
    \qquad
    \Big(\prod_{B \in \p_\q}
    \GL(V_B)\Big) \rtimes \Stab (\q)
    \, .
    \]
The composition in the first coordinate indeed lies in \(\prod_B \GL(V_B)\): 
    For a fixed block \(B\) of $\q$, 
    \[V_B \xrightarrow{\, \iota(\sigma)^{-1}\, }
    V_{\sigma^{-1}(B)} \xrightarrow{\ h\ }
    V_{\sigma^{-1}(B)} 
    \xrightarrow{\ \iota(\sigma) \ } V_{B} 
    \xrightarrow{\ g\hphantom{x} } 
    V_B
    \, . 
    \]
We obtain
a short proof of an alternate formulation of
the main result of Jin~\cite{Jin}
using the Splitting Lemma.
We use this
approach 
to establish
\cref{ConstantOffDiagonalBlocks},
\cref{DirectProductDecomp},
and the classification in \cref{ClassificationSection}.

\begin{cor}\label{BlockMatrixDecomposition}
    For a skew polynomial algebra \(S_\q(V)\),
       $$
     \Aut_{\gr}(S_\q(V)) 
    \ \cong\ 
    \Big(\prod_{B \in \p_\q}
    \GL(V_B)\Big)\, \rtimes\, \Stab (\q)
    \, .
    $$
\end{cor}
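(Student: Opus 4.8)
The plan is to invoke the Splitting Lemma (\cref{SemiDirectProducts}) with the homomorphisms already constructed. We have $\pi: \Aut_{\gr}(S_\q(V)) \to \Stab(\q)$ from \cref{BlockAutoProjection} and $\iota: \Stab(\q) \to \Aut_{\gr}(S_\q(V))$ from \cref{BlockAutoInjection}, and \cref{BlockAutoProjection} records that $\pi\iota = \mathrm{Id}$ and that $\ker\pi = \prod_{B \in \p_\q}\GL(V_B)$. So almost all the work is done: applying \cref{SemiDirectProducts} with $G = \Aut_{\gr}(S_\q(V))$, $G' = \Stab(\q)$, the map $\pi$, and the splitting $\iota$ immediately yields the exact sequence $1 \to \prod_{B}\GL(V_B) \hookrightarrow \Aut_{\gr}(S_\q(V)) \xrightarrow{\pi} \Stab(\q) \to 1$ splits, hence
\[
\Aut_{\gr}(S_\q(V)) \ \cong\ \Big(\prod_{B \in \p_\q}\GL(V_B)\Big) \rtimes \Stab(\q),
\]
with the semidirect product action being conjugation through $\iota$, exactly the action spelled out in the paragraph preceding the corollary.

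The one point I would make sure to address explicitly is that $\prod_{B \in \p_\q}\GL(V_B)$ is genuinely a subgroup of $\Aut_{\gr}(S_\q(V))$ and that it is normal — but both are handled by earlier material: membership is the cited result of Kirkman--Kuzmanovich--Zhang \cite[Lemma 3.2]{KKZ} (stated in the excerpt), and normality is automatic since it equals $\ker\pi$. I would also note that the multiplication rule in the semidirect product matches the formula displayed earlier, since the action of $\sigma \in \Stab(\q)$ on $h \in \prod_B \GL(V_B)$ is conjugation by $\iota(\sigma)$, and the composite $V_B \xrightarrow{\iota(\sigma)^{-1}} V_{\sigma^{-1}(B)} \xrightarrow{h} V_{\sigma^{-1}(B)} \xrightarrow{\iota(\sigma)} V_B$ lands back in $\prod_B \GL(V_B)$, as verified just above the statement.

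There is essentially no obstacle here: the corollary is a bookkeeping consequence of the two preceding results together with the Splitting Lemma, so the proof is a single short paragraph. If anything merited a sentence of care, it would be confirming that the conjugation action used in the abstract semidirect product of \cref{SemiDirectProducts} (namely $g \mapsto \iota(\sigma) g \iota(\sigma)^{-1}$) coincides with the action we want to display, but this is immediate from the form of the multiplication given by the Splitting Lemma. Thus the proof reads: apply \cref{SemiDirectProducts} to $\pi$ and $\iota$; the kernel is $\prod_{B \in \p_\q}\GL(V_B)$ by \cref{BlockAutoProjection}; done.
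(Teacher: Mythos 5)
Your proposal is correct and is essentially the paper's own proof: the authors likewise just apply the Splitting Lemma of \cref{SemiDirectProducts} to the maps $\pi$ and $\iota$ from \cref{BlockAutoProjection} and \cref{BlockAutoInjection}, with the kernel identification $\ker\pi = \prod_{B\in\p_\q}\GL(V_B)$ giving the stated semidirect product via $g \mapsto \big(g\,\iota\pi(g^{-1}),\ \pi(g)\big)$. Your extra remarks about normality and the conjugation action are accurate but already covered by the preceding material, exactly as you note.
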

\begin{proof}
\cref{SemiDirectProducts} 
with the maps \(\iota: \Stab (\q) \rightarrow \Aut_{\gr}(S_\q(V)\)  
    and 
    \(\pi: \Aut_{\gr}(S_\q(V)) \rightarrow \Stab (\q)\) 
    of \cref{BlockAutoInjection} and \cref{BlockAutoProjection}
   give the isomorphism
    $
    g \mapsto \big(g \cdot \iota\pi (g^{-1}),\ \pi(g) \big) $.
\end{proof}

\vspace{1ex}

\subsection*{Orbit Condition}
We now
characterize the graded automorphism group
of a skew polynomial algebra
in terms of subgroups
of the symmetric group maximal
with respect to an orbit condition.
The maximality condition explains why so few permutation groups
appear as factors of the semidirect product 
in \cref{BlockMatrixDecomposition}.
It also allows one to rule out groups
in a classification of
graded automorphism groups, see
\cref{ClassificationSection}.

Recall that $\p_\q$ is 
the partition of \([n]\) 
given by grouping identical rows of $\q$
and we fix $r=|\p_\q|$
as the number of distinct rows,
with $\mathfrak{S}_r$ acting on $\p_\q$
by permutations.
We use the diagonal action of
\( \mathfrak{S}_r \) on \([r]^2=[r]\times [r]\)
and show that the stabilizing permutation group \(\Stab (\q)\) 
is maximal among all subgroups $G$
of $\mathfrak{S}_r$ 
with the same orbits, i.e., with $[r]^2/G
= [r]^2/ \Stab (\q)$.

\begin{lemma}
\label{lattice}
    For any partition $\mathcal{P}$ of $[r]^2$, 
    the set of subgroups 
    $\{ G \subset \mathfrak{S}_r:     
[r]^2/G \text{ refines } \mathcal{P}\}$
forms a lattice under subgroup inclusion.
\end{lemma}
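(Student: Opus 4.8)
The plan is to show the set $\mathcal{L} = \{ G \subset \mathfrak{S}_r : [r]^2/G \text{ refines } \mathcal{P}\}$ is closed under the two lattice operations inherited from the subgroup lattice of $\mathfrak{S}_r$: intersection (the meet) and the subgroup generated by a union (the join). Closure under arbitrary intersections is the easy direction: if $\{G_\alpha\}$ is a family in $\mathcal{L}$, then the orbits of $G = \bigcap_\alpha G_\alpha$ refine the orbits of each $G_\alpha$ (a smaller group has finer orbits), hence refine $\mathcal{P}$ by transitivity of refinement. So $\mathcal{L}$ is closed under meets, and in particular it contains a least element, the trivial group, whose orbits are singletons and certainly refine any partition.

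The substantive point is closure under joins. Given $G_1, G_2 \in \mathcal{L}$, I want $\langle G_1, G_2\rangle \in \mathcal{L}$, i.e., the orbits of $\langle G_1, G_2\rangle$ on $[r]^2$ still refine $\mathcal{P}$. The key observation is that ``$[r]^2/G$ refines $\mathcal{P}$'' is equivalent to saying each block of $\mathcal{P}$ is $G$-stable (a union of $G$-orbits): indeed, refinement means every $G$-orbit lies inside a single block of $\mathcal{P}$, which says exactly that $G$ fixes the partition $\mathcal{P}$ setwise under the induced action on subsets of $[r]^2$. Thus $\mathcal{L}$ is precisely the set of subgroups of $\mathfrak{S}_r$ preserving the partition $\mathcal{P}$ of $[r]^2$ (equivalently, contained in the full stabilizer of $\mathcal{P}$ in its action on $[r]^2$). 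Being a stabilizer, this set is visibly closed under products and inverses, hence is itself a subgroup; so if $G_1$ and $G_2$ both preserve $\mathcal{P}$, so does every element of $\langle G_1, G_2\rangle$, giving $\langle G_1, G_2\rangle \in \mathcal{L}$.

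Putting these together: $\mathcal{L}$ is a nonempty (it contains $\mathbf{1}$) sub-poset of the subgroup lattice of $\mathfrak{S}_r$ closed under both meets and joins of that lattice, hence is a lattice in its own right, with $G_1 \wedge G_2 = G_1 \cap G_2$ and $G_1 \vee G_2 = \langle G_1, G_2\rangle$. (Since $\mathfrak{S}_r$ itself need not lie in $\mathcal{L}$, the join inside $\mathcal{L}$ need not be a top element; but a lattice does not require a top, and closure under the two binary operations suffices.) The only place requiring any care is the reformulation ``refines $\mathcal{P}$ $\iff$ every block of $\mathcal{P}$ is $G$-stable,'' which is where I expect a reader might want a sentence of justification: one direction is immediate, and for the other, if each block is a union of $G$-orbits then each $G$-orbit is contained in exactly one block, which is the definition of refinement. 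No deeper obstacle arises; the result is essentially the statement that stabilizers of a fixed structure form a subgroup, repackaged in lattice language, and the main value is notational — it lets the subsequent maximality result for $\Stab(\q)$ speak of ``the maximal subgroup'' with a well-defined meaning.
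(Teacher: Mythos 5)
Your proof is correct and follows essentially the same route as the paper's (which simply asserts nonemptiness and closure of the family under $\cap$ and $\langle\,\cdot\,,\cdot\,\rangle$); your reformulation of ``$[r]^2/G$ refines $\mathcal{P}$'' as ``every block of $\mathcal{P}$ is $G$-stable'' is exactly the justification the paper leaves implicit for closure under joins. One small wording caution: say ``$G$ fixes each block of $\mathcal{P}$ setwise'' rather than ``$G$ fixes the partition $\mathcal{P}$ setwise,'' since the latter would ordinarily allow $G$ to permute blocks, which is not equivalent to the refinement condition.
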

\begin{proof}
The orbits of the trivial group  on \([r]^2\) are the sets \(\{(i,j)\}\) giving a refinement of any partition of \([r]^2\), so the set of subgroups is nonempty.
If \(G\) and \(H\) are subgroups 
with $[r]^2/G$ and \([r]^2/H\) refining $\mathcal{P}$, then \([r]^2/\langle G, H \rangle\) 
and \([r]^2/ (G\cap H)$
also refine \(\mathcal{P}\). 
\end{proof}

We now give a description of the 
graded automorphism groups
useful for the
classification
in \cref{ClassificationSection}.
We write any partition $\lambda$ of $n$
as $(\lambda_1^{m_1} \ldots \, \lambda_s^{m_s})$ 
with $\lambda_1, \ldots, \lambda_s$ the distinct 
parts and $m_i$ the multiplicity of $\lambda_i$, i.e., number of parts 
 equal to $\lambda_i$.
 Recall the minor matrices defined by blocks of $\q$, see  \cref{BlockMinorDefn}.
\begin{thm}
\label{BlockAutosSubsetMaximal}
For a quantum parameter matrix $\q$ with
blocks $B_1, \ldots, B_r$,
let \(\mathcal{P}\) be the partition of \([r]^2\) given by 
\((i,j) \sim (\ell,m)\) 
if \( \q_{_{B_i\, B_j}} = \q_{_{B_{\ell}\, B_{m}}}\).
Then
    \[\Aut_{\gr}(S_\q(V)) 
    \ \cong \ 
    \Big(\prod_{i=1}^{s}\GL(\lambda_i, \k)^{m_i}\Big)
    \, \rtimes\, G
    \]
    for 
   some integer partition 
    $\lambda=(\lambda_1^{m_1} \ldots \, \lambda_s^{m_s})$ 
    of $\dim V$
    with 
    $\sum_i m_i = r$
    and 
    \(G\) the maximal subgroup of \(\mathfrak{S}_r\) such that 
the orbit space \( [r]^2 /G\) refines \(\mathcal{P}\). 
    Here, $G=\Stab(\q)$, and $G$ acts by a subgroup of
    $\prod_{i=1}^s \mathfrak{S}_{m_i}$
    where each  $\mathfrak{S}_{m_i}$ permutes
    the $m_i$ copies of $\GL(\lambda_i, \k)$.
    \end{thm}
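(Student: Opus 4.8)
The plan is to assemble this statement from the pieces already in place: \cref{BlockMatrixDecomposition} gives the semidirect-product decomposition, \cref{BlockAutoInjection} and \cref{BlockAutoProjection} identify the acting group as $\Stab(\q)$, and \cref{lattice} supplies the lattice structure needed to make sense of ``the maximal subgroup.'' So the work is really bookkeeping plus one genuine identification (that $\Stab(\q)$ is exactly that maximal subgroup).

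First I would unpack the left factor. Since each block $B$ of $\q$ contributes $\GL(V_B)\cong\GL(|B|,\k)$, and the blocks have sizes given by some integer partition $\lambda$ of $n=\dim V$, grouping blocks by common size rewrites $\prod_{B\in\p_\q}\GL(V_B)$ as $\prod_{i=1}^s \GL(\lambda_i,\k)^{m_i}$, where $\lambda_i$ are the distinct block sizes and $m_i$ the number of blocks of size $\lambda_i$. The identity $\sum_i m_i = r$ is immediate since the blocks number $r=|\p_\q|$. Next I would record that $\Stab(\q)$, being a group of block permutations that preserve block size, acts on the set of blocks by permuting blocks of equal size; in the regrouped indexing this is exactly an action by a subgroup of $\prod_{i=1}^s\mathfrak{S}_{m_i}$, with $\mathfrak{S}_{m_i}$ permuting the $m_i$ copies of $\GL(\lambda_i,\k)$. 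Combined with \cref{BlockMatrixDecomposition} this already gives the displayed isomorphism with $G=\Stab(\q)$.

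The one step with content is the claim that $G=\Stab(\q)$ is the \emph{maximal} subgroup of $\mathfrak{S}_r$ whose orbit space $[r]^2/G$ refines $\mathcal P$. That $[r]^2/\Stab(\q)$ refines $\mathcal P$ is by definition: if $(i,j)$ and $(\ell,m)$ lie in the same $\Stab(\q)$-orbit, say $\sigma$ sends $B_i\mapsto B_\ell$ and $B_j\mapsto B_m$, then $\q_{B_iB_j}=\q_{B_\ell B_m}$ because $\sigma$ fixes $\q$, so they lie in the same $\mathcal P$-class. Conversely, suppose $G'\subset\mathfrak{S}_r$ has $[r]^2/G'$ refining $\mathcal P$; I must show $G'\subset\Stab(\q)$, which amounts to checking each $\sigma\in G'$ preserves block size and satisfies $\q_{B_iB_j}=\q_{B_{\sigma(i)}B_{\sigma(j)}}$. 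The latter is exactly the statement that $(i,j)$ and $(\sigma(i),\sigma(j))$ lie in the same $\mathcal P$-class, which holds since they lie in one $G'$-orbit and $[r]^2/G'$ refines $\mathcal P$ by hypothesis; the block-size condition follows by looking at the diagonal, since $\q_{B_iB_i}$ is a $|B_i|\times|B_i|$ quantum parameter matrix and $(i,i)\sim(\sigma(i),\sigma(i))$ forces these to have the same size. Thus every such $G'$ lies inside $\Stab(\q)$, so $\Stab(\q)$ is the largest such subgroup; \cref{lattice} guarantees this maximum is well-defined (the relevant set of subgroups is a lattice, so it has a top element, and we have just shown $\Stab(\q)$ is it).

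I expect the main obstacle to be purely notational: carefully threading the relabeling that turns $\prod_{B}\GL(V_B)$ into $\prod_i\GL(\lambda_i,\k)^{m_i}$ and simultaneously recasts the $\Stab(\q)$-action as an action inside $\prod_i\mathfrak{S}_{m_i}$, so that the semidirect product of \cref{BlockMatrixDecomposition} matches the one asserted here on the nose. There is no deep difficulty — the block-size-preservation already built into $\Stab(\q)$ is exactly what makes the action land in $\prod_i\mathfrak{S}_{m_i}$ — but one should state the bijection between blocks and the index set $\coprod_i[m_i]$ explicitly enough that the reader sees the two descriptions agree.
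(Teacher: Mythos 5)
Your proposal is correct and follows essentially the same route as the paper: combine \cref{BlockMatrixDecomposition} with the observation that $\sigma\in\Stab(\q)$ exactly when $[r]^2/\langle\sigma\rangle$ refines $\mathcal{P}$ (so $\Stab(\q)$ is the maximal such subgroup, via \cref{lattice}), then regroup the blocks by size to get $\prod_i\GL(\lambda_i,\k)^{m_i}$ with $\Stab(\q)$ acting through $\prod_i\mathfrak{S}_{m_i}$. Your explicit remark that equality of the minors $\q_{_{B_iB_j}}=\q_{_{B_{\sigma(i)}B_{\sigma(j)}}}$ already forces block-size preservation is a point the paper leaves implicit, but it is the same argument.
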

\begin{proof}
We use \cref{lattice} and \cref{BlockMatrixDecomposition}. 
For \(\sigma \in \mathfrak{S}_r\), 
the partition 
given by the orbits \([r]^2/\langle \sigma \rangle\) refines \(\mathcal{P}\) exactly when \(\q_{_{BC}} = \q_{_{\sigma(B)\, \sigma(C)}}\) for all blocks \(B,C\) of \(\p_\q\). Thus,
    \[
    G=
    \Stab(\q)=\{ \sigma\in \mathfrak{S}_r:
\sigma \text{ preserves block size and } \sigma \q = \q    
\}
    \ = \ 
    \{\sigma \in \mathfrak{S}_r: [r]^2/\langle \sigma \rangle \textup{ refines } 
    \mathcal{P}\}\]
    is maximal among the groups with orbits
    refining \(\mathcal{P}\),
    giving the first statement.

     For each block \(B \in \p_q\), 
     we identify \(\GL(V_B)\) with \(\GL(|B|, \k)\)
    and define the partition $\lambda$ of $n$
    corresponding to the block sizes:
    \(\lambda = ({\lambda_1}^{m_1}\ \ldots\ 
    \lambda_s^{m_s})\),
    where the distinct parts \(\lambda_i\) give the 
    distinct sizes of blocks of $\q$ 
    and \(m_i\) is the number of blocks of size \(\lambda_i\).
    Since each \(\sigma\) in \(G\)
    preserves block size,  
      \( G\) must be a subgroup of
    $\prod_{i=1}^s\mathfrak{S}_{m_i}$
    up to conjugation in $\mathfrak{S}_r$.
      \end{proof}

\vspace{1ex}

\begin{remark}{\em 
    Two extreme cases of \cref{BlockAutosSubsetMaximal}
    arise.
When  $\q$ has no duplicate rows,
       all the blocks have size $1$
       giving a 
generalization of \cite[Proposition~3.9]{Ceken2016}:
         $$\Aut_{\gr}(S_\q(V)) \ \cong\  
     (\k^{\times})^n \, \rtimes\, \Stab(\q)
     \, . $$
In contrast, when the block sizes of \(\q\) are all distinct
     recorded by a partition \(\lambda\),
     $\Stab(\q)$ is trivial:
    $$ 
    \Aut_{\gr}(S_\q(V)) \ \cong\  \prod_{\lambda_i}\GL(\lambda_i, \k)
    \, .
    $$  
} 
\end{remark}

 \vspace{1ex}

We now consider some examples.

\begin{example}{\em
    Say \(|\k| > 3\) and let $ \q=\left(\begin{smallmatrix}
       A & B & C \\
       C & A & B \\
       B & C & A
       \end{smallmatrix}\right)
       \rule[-2ex]{0ex}{3ex}
$
for
   $
   A=\left(\begin{smallmatrix}
       1 & 1 \\
       1 & 1
       \end{smallmatrix}\right)$,
$   B=\left(\begin{smallmatrix}
       a & a \\
       a & a
       \end{smallmatrix}\right)$,
       and
 $   C=\left(\begin{smallmatrix}
       a^{-1} & a^{-1} \\
       a^{-1} & a^{-1}
       \end{smallmatrix}\right)$ 
for \(a \in \k\) with \(a \not\in \{0,1,-1\}\). 
Then \(\p_\q = \{\{1,2\}, \{3,4\}, \{5,6\}\}\)
and
\cref{BlockAutosSubsetMaximal} implies that \(\Aut_{\gr}(S_\q(V)) \cong D \rtimes\Stab (\q)\) where \(\Stab (\q) \subset \mathfrak{S}_3\) and 
    \[
    D = \Big\{\begin{smallpmatrix}
        M_1 & 0 & 0 \\
        0 & M_2 & 0 \\
        0 & 0 & M_3
    \end{smallpmatrix}:\ M_i \in \GL(2, \k)\Big\} 
    \ \cong\ 
    (\GL(2, \k))^3\, .
    \]
    A straightforward calculation shows that \(\Stab (\q) = \langle (1\, 2\, 3) \rangle\) and thus
    \[\Aut_{\gr}(S_\q(V)) = 
    \Big\{
    \begin{smallpmatrix}
        M_1 & 0 & 0 \\
        0 & M_2 & 0 \\
        0 & 0 & M_3
    \end{smallpmatrix},\ \begin{smallpmatrix}
        0 & 0 & M_3 \\
        M_1 & 0 & 0 \\
        0 & M_2 & 0
    \end{smallpmatrix},\ \begin{smallpmatrix}
        0 & M_2 & 0 \\
        0 & 0 & M_3 \\
        M_1 & 0 & 0
    \end{smallpmatrix}: 
    M_i \in \GL(2, \k)
    \Big\}\, .\]
}
\end{example}

\vspace{1ex}

\begin{example}{\em 
    Let \(V \cong \k^8\) with \(\Q \subset \k\) and let
    \[\q = \begin{smallpmatrix}
        1 & \frac{1}{2} & \frac{1}{2} & -1 & -1 & 1 & 3 & \frac{1}{2} \\
        2 & 1 & 1 & 2 & 2 & 2 & 4 & 1 \\
        2 & 1 & 1 & 2 & 2 & 2 & 4 & 1 \\
        -1 & \frac{1}{2} & \frac{1}{2} & 1 & 1 & -1 & 3 & \frac{1}{2} \\
        -1 & \frac{1}{2} & \frac{1}{2} & 1 & 1 & -1 & 3 & \frac{1}{2} \\
        1 & \frac{1}{2} & \frac{1}{2} & -1 & -1 & 1 & 3 & \frac{1}{2} \\
        \frac{1}{3} & \frac{1}{4} & \frac{1}{4} & \frac{1}{3} & \frac{1}{3} & \frac{1}{3} & 1 & \frac{1}{4} \\
        2 & 1 & 1 & 2 & 2 & 2 & 4 & 1 \\
        \end{smallpmatrix}\, .\]
Here,
    \(\p_\q = \{\{1,6\}, \{2,3,8\}, \{4,5\}, \{7\}\}\) and
     \cref{BlockMatrixDecomposition} implies that
    \[
    \Aut_{\gr}(S_{\q}(V)) = \left(\GL(V_{\{1,6\}}) \times \GL(V_{\{2,3,8\}})\times \GL(V_{\{4,5\}}) \times \GL(V_{\{7\}})\right)
    \, \rtimes\, \Stab (\q)
    \, .
    \]
    A quick computation verifies
    that \(\Stab (\q) = \langle (1\ 3) \rangle\), 
    which swaps rows $1$ and $6$ with $4$ and $5$. 
    Thus
    \[\Aut_{\gr}(S_{\q}(V)) = \GL(8, \k) \cap \left\{ \begin{smallpmatrix}
        * & & & & & * & & \\
         & * & * & & & & & * \\
         & * & * & & & & & * \\
         & & & * & * & & & \\
         & & & * & * & & & \\
         * & & & & & * & & \\
         & & & & & & * & \\
         & * & * & & & & & *
    \end{smallpmatrix},\ 
    \begin{smallpmatrix}
         & & & * & * & & & \\
         & * & * & & & & & * \\
         & * & * & & & & & * \\
        * & & & & & * & & \\
        * & & & & & * & & \\
         & & & * & * & & & \\
         & & & & & & * & \\
         & * & * & & & & & *
    \end{smallpmatrix}\right\}
    \, .
    \]
}
\end{example}

\vspace{1ex}

\begin{example}\label{TwoExamplesWithBlocks}
{\em
   For \(\k = \mathbb{F}_5$,
   $\dim V = 9$, 
   $
   A=\left(\begin{smallmatrix}
       1 & 2 & 3 \\
       3 & 1 & 2 \\
       2 & 3 & 1
       \end{smallmatrix}\right)$,
$   B=\left(\begin{smallmatrix}
       2 & 2 & 2 \\
       2 & 2 & 2 \\
       2 & 2 & 2
       \end{smallmatrix}\right)$,
 $   C=\left(\begin{smallmatrix}
       3 & 3 & 3 \\
       3 & 3 & 3 \\
       3 & 3 & 3
       \end{smallmatrix}\right)$,   
and
$ \q=\left(\begin{smallmatrix}
       A & B & C \\
       C & A & B \\
       B & C & A
       \end{smallmatrix}\right)
$,
$$
\Aut_{\gr}(S_\q(V)) 
\ \cong\ (\k^\times)^9 
\,\rtimes\, 
\big[\langle (1\, 2\, 3) \rangle ^3 
        \, \rtimes\, \langle (1\, 2\, 3) \rangle\big] 
        \ \cong\
        \big[(\k^\times)^3 
        \,\rtimes\, 
        \langle (1\, 2\, 3) \rangle \big]^3 \,\rtimes\, 
        \langle (1\, 2\, 3) \rangle
        \, .
        $$
}
\end{example}

\vspace{1ex}

\subsection*{Constant off-diagonal minors}
The last examples leads us to consider quantum parameter matrices with constant matrix minors.
Given a quantum parameter matrix $\q$
and a partition $\mathcal{P}$ of \([n]\)
with blocks $D$ and $E$,
we call the submatrix $\q_{_{DE}}$ a
{\em diagonal $\mathcal{P}$-minor} when $D= E$
and an
{\em off-diagonal $\mathcal{P}$-minor} when $D\neq E$.
Note that $\q$ is constant on
the off-diagonal minors
for the block-circle decomposition
partition, see \cite[Proposition~4.3b]{KKZ}.

In the next proposition,
we consider the induced action
of $\Stab(\q)$
on the set of partitions $\mathcal{P}$
of $[n]$ refined by $\p_\q$:
For $\sigma$ in $\Stab(\q)$,
set 
$\sigma\mathcal{P}$ to be the partition
of $[n]$ whose blocks are
$\sigma(B_1)\cup\cdots\cup\sigma(B_m)$ 
for $B_1\cup\cdots\cup B_m$ a block of 
 $\mathcal{P}$,
where the $B_i$ are blocks of $\p_\q$.

\begin{prop}
\label{ConstantOffDiagonalBlocks}
Suppose \(\q\) is an $n\times n$ 
    quantum parameter matrix 
    constant on off-diagonal
$\mathcal{P}$-minors for a partition 
    \(   \mathcal{P} 
    \)  of \([n]\) 
    refined by \(\p_\q\).
    If  $\Stab(\q)$ fixes $\mathcal{P}$,
    then 
    \[\Aut_{\gr}(S_\q(V)) 
    \ \cong\  
    \prod_{D \in \mathcal{P}}\Aut_{\gr}(S_{\q_{_{DD}}}(V_D))\, \rtimes\, \Stab_{\, \mathcal{P}}(\q)
    \, 
    \]
for $\Stab_{\, \mathcal{P}}(\q)=\{ \sigma \in \mathfrak{S}_{|\mathcal{P}|}:
|D| = |\sigma(D)| \text{ and }\q_{_{\sigma(D)\, \sigma(E)}} = \q_{_{DE}} 
\text{ for all blocks 
$D,E$
of $\mathcal{P}$}
\}$.
\end{prop}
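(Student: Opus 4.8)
The plan is to build on the semidirect product decomposition of \cref{BlockMatrixDecomposition} and show that the hypotheses let us ``coarsen'' the partition $\p_\q$ to the partition $\mathcal{P}$. First I would observe that since $\p_\q$ refines $\mathcal{P}$, each block $D$ of $\mathcal{P}$ is a disjoint union of blocks of $\p_\q$, and accordingly $V = \bigoplus_{D \in \mathcal{P}} V_D$ with $V_D = \bigoplus_{B \subset D} V_B$. The key local claim is that for each fixed $D \in \mathcal{P}$, the subgroup $\prod_{B \subset D}\GL(V_B) \rtimes \Stab(\q|_D)$ appearing inside the big semidirect product is exactly $\Aut_{\gr}(S_{\q_{_{DD}}}(V_D))$ — this is just \cref{BlockMatrixDecomposition} applied to the quantum parameter matrix $\q_{_{DD}}$, whose blocks (in the sense of identical rows) are precisely the blocks of $\p_\q$ contained in $D$. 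One must check that the partition of $D$ by identical rows of $\q_{_{DD}}$ really is $\{B \in \p_\q : B \subset D\}$: two rows of $\q_{_{DD}}$ indexed by $i,j \in D$ agree iff $q_{im}=q_{jm}$ for all $m \in D$, and since $\q$ is constant on off-diagonal minors for $\mathcal{P}$, the entries $q_{im}$ for $m \notin D$ depend only on the $\mathcal{P}$-block of $i$, which is $D$ for both $i$ and $j$; hence agreement on $D$ upgrades to agreement on all of $[n]$, so these are genuinely $\p_\q$-blocks. That is the crux of why ``constant on off-diagonal minors'' is the right hypothesis.

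Next I would set up the two homomorphisms for the Splitting Lemma of \cref{SemiDirectProducts}, mirroring the structure of \cref{BlockAutoInjection} and \cref{BlockAutoProjection} but one level up. For the projection, given $h \in \Aut_{\gr}(S_\q(V))$ I would compose $\pi(h) \in \Stab(\q)$ from \cref{BlockAutoProjection} with the map $\Stab(\q) \to \mathfrak{S}_{|\mathcal{P}|}$ that records how a block permutation of $\p_\q$ moves the coarser blocks of $\mathcal{P}$; this is well-defined because $\Stab(\q)$ fixes $\mathcal{P}$ by hypothesis, so every $\sigma \in \Stab(\q)$ carries a union-of-$\p_\q$-blocks forming a $\mathcal{P}$-block to another such union, i.e.\ permutes $\mathcal{P}$. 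Call the composite $\Pi \colon \Aut_{\gr}(S_\q(V)) \to G$. One checks $\Pi$ lands in $G$ as defined in the statement: the size condition $|D| = |\Pi(h)(D)|$ follows from block-size preservation of $\pi(h)$ summed over the $\p_\q$-blocks inside $D$, and the condition $\q_{_{\Pi(h)(D)\,\Pi(h)(E)}} = \q_{_{DE}}$ follows from the corresponding identity $\q_{_{B C}} = \q_{_{\pi(h)(B)\,\pi(h)(C)}}$ for $\p_\q$-blocks together with constancy of $\q$ on off-diagonal minors (for $D \neq E$ every entry of $\q_{_{DE}}$ is the same scalar, determined by the pair $(D,E)$, and this scalar is preserved; for $D = E$ it follows from $\sigma\q = \q$ restricted to $D$). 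For the injection $I \colon G \to \Aut_{\gr}(S_\q(V))$ I would, exactly as in \cref{BlockAutoInjection}, send $\sigma \in G$ to the unique monomial-on-basis linear map carrying $V_D$ to $V_{\sigma(D)}$ order-preservingly within $D$; that $I(\sigma)$ is a graded automorphism follows from \cref{IsoConditionOnIndices} using $\q_{_{DE}} = \q_{_{\sigma(D)\,\sigma(E)}}$, and functoriality follows from uniqueness as before.

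Then I would verify $\Pi\, I = \mathrm{Id}_G$ and identify $\ker \Pi$. The composite $\Pi\,I = \mathrm{Id}$ is immediate from how $I(\sigma)$ moves basis vectors. For the kernel: $h \in \ker\Pi$ means $\pi(h)$, while possibly permuting $\p_\q$-blocks, fixes every $\mathcal{P}$-block setwise; I claim such $h$ lies in $\prod_{D \in \mathcal{P}} \Aut_{\gr}(S_{\q_{_{DD}}}(V_D))$. Indeed $\pi(h)(B) \subset D$ for every $\p_\q$-block $B \subset D$, which by the definition of $\pi(h)$ in \cref{BlockAutoProjection} forces $h_{ij} = 0$ whenever $j \in D$ and $i \notin D$; so $h$ is block-diagonal with respect to the decomposition $V = \bigoplus_D V_D$, and each diagonal block $h|_{V_D}$ is a graded automorphism of $S_{\q_{_{DD}}}(V_D)$ by the indexwise condition of \cref{IsoConditionOnIndices} restricted to indices in $D$ (using that $\q_{_{DD}}$ is a quantum parameter matrix). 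Conversely any element of $\prod_D \Aut_{\gr}(S_{\q_{_{DD}}}(V_D))$ satisfies \cref{IsoConditionOnIndices} for $\q$ on all of $[n]^2$: pairs of indices in the same $\mathcal{P}$-block are handled by the local automorphism property, and pairs straddling two $\mathcal{P}$-blocks are handled because the relevant $q$-values are the off-diagonal constants, which are equal after any permutation fixing each block. Hence $\ker\Pi = \prod_{D \in \mathcal{P}} \Aut_{\gr}(S_{\q_{_{DD}}}(V_D))$, and \cref{SemiDirectProducts} yields the claimed isomorphism with conjugation action. The main obstacle I anticipate is the bookkeeping in this last kernel identification — specifically, cleanly arguing both inclusions of $\ker\Pi$ while keeping straight the three-way interaction between the fine partition $\p_\q$, the coarse partition $\mathcal{P}$, and the off-diagonal constancy; everything else is a routine transcription of the one-level-down arguments already in the excerpt.
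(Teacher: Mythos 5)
Your proposal is correct and follows essentially the same route as the paper: compose the projection $\pi$ of \cref{BlockAutoProjection} with the induced permutation action on $\mathcal{P}$ (well-defined because $\Stab(\q)$ fixes $\mathcal{P}$), build the section exactly as in \cref{BlockAutoInjection}, and invoke \cref{SemiDirectProducts}. Your kernel identification is in fact spelled out more carefully than in the paper's proof — in particular your observation that off-diagonal constancy upgrades row-agreement on $D$ to row-agreement on $[n]$, and your two-sided verification that $\ker\Pi = \prod_{D}\Aut_{\gr}(S_{\q_{_{DD}}}(V_D))$ via \cref{IsoConditionOnIndices} — but the underlying argument is the same.
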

\begin{proof}
We first define a group homomorphism 
$\psi: \Stab(\q)\rightarrow 
\Stab_{\, \mathcal{P}}(\q)$.
Fix $\sigma$ in $\Stab(\q)$.
As $\sigma \mathcal{P}=\mathcal{P}$,
for any block
$D = B_1 \cup \cdots \cup B_m$ of $\mathcal{P}$
with \(B_i \in \p_\q\), 
 \[
\sigma(D) 
= \sigma(B_1) \cup \cdots \cup \sigma(B_m)
 \, 
\]
is again a block of $\mathcal{P}$ 
with \(|\sigma(D)| = |D|\) 
as $\sigma$ permutes the blocks of $\q$ of the same size.
We may thus identify $\sigma$ with an element of
$\mathfrak{S}_{|\mathcal{P}|}$.
Since \(\q_{_{BC}} = \q_{_{\sigma(B)\sigma(C)}}\)
for all blocks $B$, $C$ of $\q$,
 it follows that \(\q_{_{DE}} = \q_{\sigma(D)\sigma(E)}\)
for all blocks $D$, $E$ of $\mathcal{P}$
and $\sigma$ lies in 
$\Stab_{\, \mathcal{P}}(\q)$.
We obtain the advertised map $\psi$
and compose with
the projection $\pi$ from \cref{BlockAutoProjection}:
Define
$$\pi'= \psi\circ \pi: 
\ \ \Aut_{\gr}(S_\q(V)) 
\xrightarrow{\ \ \pi\ \ } 
\Stab(\q)
\xrightarrow{\ \ \psi\ \ } 
\Stab_{\, \mathcal{P}}(\q).
$$
Note that 
$\ker \pi' \subset
\prod_{D\in \mathcal{P}}
\GL(V_D)$
since
$\pi'(h)(D) = \{i: h_{ij}\neq 0 \text{ for } j \in D\}$
by construction.

Using \cref{IsoConditionOnIndices}, as $\q$ is constant on off-diagonal $\mathcal{P}$-minors, one may verify that
$$\ker \pi' = \Aut_{\gr}(S_\q(V)) \cap \prod_{D\in \mathcal{P}}
\GL(V_D) = \prod_{D \in \mathcal{P}}\Aut_{\gr}(S_{\q_{_{DD}}}(V_D))\, .$$

    We define an injective group homomorphism \(\iota':\Stab_{\, \mathcal{P}}(\q) \rightarrow \Aut_{\gr}(S_\q(V))\).
    For each $\sigma \in \Stab_{\, \mathcal{P}}(\q)$, set \(\iota'(\sigma)\) to be the unique invertible linear map on \(V\) which permutes the basis elements of \(V\), sends \(V_D\) to \(V_{\sigma(D)}\), and preserves the order \(v_1 < v_2 < \dots < v_n\) of basis elements within each block of $\mathcal{P}$, using the fact that \(|D| = |\sigma(D)|\).
    We verify that 
    $\iota'(\sigma)$ lies in $\Aut_{\gr}(S_\q(V))$
    using the argument in the proof of \cref{BlockAutoInjection}.

    A straightforward check confirms that
    \(\pi'\iota'\) is the identity,
    and the result follows from the Splitting Lemma, see
\cref{SemiDirectProducts}.
\end{proof}

\vspace{1ex}

\begin{remark}\em{
    For any parameter matrix \(\q\), there are two partitions which satisfy the condition in
    \cref{ConstantOffDiagonalBlocks} trivially, namely \(\p_\q\) itself and \([n]\). Using the partition \(\mathcal{P} = \p_\q\), we recover the result in \cref{BlockMatrixDecomposition}. 
    At the other extreme, using \(\mathcal{P} = [n]\)
    gives the trivial statement \(\Aut_{\gr}(S_\q(V)) \cong \Aut_{\gr}(S_\q(V)) \rtimes \mathfrak{S}_1\). 
    Thus, we seek a partition coarser than \(\p_\q\) and finer than \([n]\) satisfying the hypothesis of 
    \cref{ConstantOffDiagonalBlocks}.
    In \cref{TwoExamplesWithBlocks}, 
    one such example is
    the partition \(\mathcal{P} = \{\{1,2,3\}, \{4,5,6\}, \{7,8,9\}\}\)
    with \(\Aut_{\gr}(S_\q(V))
    \cong
        \Aut_{\gr}(S_{\q'}(V))^3 
        \rtimes
        \langle (1\, 2\, 3) \rangle\)
    for \(\q' = \begin{smallpmatrix}
        1 & 2 & 3 \\
        3 & 1 & 2 \\
        2 & 3 & 1
    \end{smallpmatrix}\).
}
\end{remark}

\vspace{1ex}

\begin{cor}
    Say $\mathcal{P}$ is a partition
    of \([n]\) refining \(\p_\q\)
and
each off-diagonal $\mathcal{P}$-minor of $\q$
is constant 
and shares no entries with 
any diagonal $\mathcal{P}$-minor. 
    Then
    $$\Aut_{\gr}(S_\q(V)) \ \cong\ \prod_{D \in \mathcal{P}}\Aut_{\gr}(S_{\q^{}_{_{DD}}}(V_D))\ \rtimes \ \Stab_{\, \mathcal{P}}(\q)$$
    where \(\Stab_{\, \mathcal{P}}(\q) = \{\sigma \in \mathfrak{S}_{|\mathcal{P}|}:
    |\sigma(D)| = |D| \text{ and } \q^{}_{_{\sigma(D)\sigma(E)}} = \q^{}_{_{DE}} \text{ for all } D,E \in \mathcal{P}\}\).
\end{cor}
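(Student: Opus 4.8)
The plan is to derive this corollary directly from \cref{ConstantOffDiagonalBlocks} by checking that the extra hypothesis---that off-diagonal minors share no entries with diagonal minors---forces $\Stab(\q)$ to fix $\mathcal{P}$, so that the stronger conclusion of \cref{ConstantOffDiagonalBlocks} applies verbatim. Thus the only real work is showing $\sigma\mathcal{P} = \mathcal{P}$ for every $\sigma \in \Stab(\q)$.

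\smallskip

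First I would recall that $\Stab(\q)$ acts on the blocks $B_1, \ldots, B_r$ of $\p_\q$, preserving block size, and since $\mathcal{P}$ refines $\p_\q$, each block $D$ of $\mathcal{P}$ is a union of certain $B_i$'s. To see $\sigma$ maps blocks of $\mathcal{P}$ to blocks of $\mathcal{P}$, fix $i, i'$ lying in the same block $D$ of $\mathcal{P}$ (so $B$, the $\p_\q$-block of $i$, and $B'$, that of $i'$, are both contained in $D$), and suppose toward a contradiction that $\sigma(B)$ and $\sigma(B')$ lie in \emph{different} blocks $D_1 \neq D_2$ of $\mathcal{P}$. Then $\q_{_{B B'}}$ is an off-diagonal minor for $\mathcal{P}$ if $B \neq B'$ and a diagonal minor (a sub-block of $\q_{_{DD}}$) if $B = B'$; meanwhile $\q_{_{\sigma(B)\,\sigma(B')}}$ is an off-diagonal minor for $\mathcal{P}$ since $D_1 \neq D_2$. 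The defining property of $\Stab(\q)$ gives $\q_{_{BB'}} = \q_{_{\sigma(B)\,\sigma(B')}}$. If $B = B'$ this equates a diagonal minor with an off-diagonal minor, contradicting the hypothesis that they share no entries (the minors are nonempty). If $B \neq B'$, pick any third block $D_3 = \sigma(B'')$ distinct from $D_1$; comparing $\q_{_{\sigma(B)\,\sigma(B'')}}$, which is off-diagonal, against its $\sigma^{-1}$-preimage and using that $i, i'$ lie in a common $\mathcal{P}$-block forces, via the same constant-entry bookkeeping as in \cref{ConstantOffDiagonalBlocks}, an entry of a diagonal minor to coincide with an entry of an off-diagonal minor --- again impossible. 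Hence $\sigma(B)$ and $\sigma(B')$ lie in the same block of $\mathcal{P}$, so $\sigma(D)$ is contained in a single block of $\mathcal{P}$, and symmetry with $\sigma^{-1}$ gives $\sigma(D)$ equal to a block of $\mathcal{P}$; that is, $\sigma\mathcal{P} = \mathcal{P}$.

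\smallskip

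With $\Stab(\q)$ shown to fix $\mathcal{P}$, the hypotheses of \cref{ConstantOffDiagonalBlocks} are met (constancy on off-diagonal minors is assumed directly), and that proposition yields
\[
\Aut_{\gr}(S_\q(V))
\ \cong\
\prod_{D \in \mathcal{P}}\Aut_{\gr}(S_{\q_{_{DD}}}(V_D))\, \rtimes\, G
\]
with $G = \{\sigma \in \mathfrak{S}_{|\mathcal{P}|}: |\sigma(D)| = |D| \textup{ and } \q_{_{\sigma(D)\sigma(E)}} = \q_{_{DE}} \textup{ for all } D,E \in \mathcal{P}\}$, which is exactly the claimed statement.

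\smallskip

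The main obstacle I anticipate is the second case of the contradiction argument ($B \neq B'$): one must argue carefully that when two $\p_\q$-blocks inside a common $\mathcal{P}$-block get separated by $\sigma$, some comparison of minors is forced to straddle the diagonal/off-diagonal divide. The cleanest route is probably to observe that since $\mathcal{P}$ refines $\p_\q$ and off-diagonal minors are constant, knowing $\q_{_{DD'}}$ for $\mathcal{P}$-blocks determines whether $D = D'$ from the entry values alone (by the "shares no entries" hypothesis), so the block partition $\mathcal{P}$ is \emph{recoverable} from $\q$ and its $\p_\q$-refinement in a way that any element of $\Stab(\q)$ must respect; phrasing it this way sidesteps the case analysis. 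I would double-check the edge case where $\mathcal{P}$ has only one or two blocks, where "off-diagonal minor" may be vacuous, to confirm the statement holds trivially there.
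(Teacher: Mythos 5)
Your overall strategy is exactly the paper's: reduce to \cref{ConstantOffDiagonalBlocks} by showing that every $\sigma \in \Stab(\q)$ fixes $\mathcal{P}$, using the hypothesis that diagonal and off-diagonal minors share no entries. However, the execution of the key step is muddled in two ways. First, your case analysis is off: if $B = B'$ then $\sigma(B) = \sigma(B')$ cannot lie in two different blocks of $\mathcal{P}$, so that case is vacuous; and for $B \neq B'$ with $B, B' \subseteq D$, the submatrix $\q_{_{BB'}}$ is \emph{not} an off-diagonal minor for $\mathcal{P}$ --- its entries all lie in the diagonal minor $\q_{_{DD}}$. Second, the case you flag as the ``main obstacle'' ($B \neq B'$) is left as a gesture toward a ``third block'' and ``constant-entry bookkeeping,'' when in fact it has a one-line proof you already have all the pieces for: $\sigma \in \Stab(\q)$ gives $\q_{_{BB'}} = \q_{_{\sigma(B)\sigma(B')}}$; the left side consists of entries of the diagonal minor $\q_{_{DD}}$, the right side of entries of the off-diagonal minor $\q_{_{D_1D_2}}$, and these minors are nonempty, directly contradicting the no-shared-entries hypothesis. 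This is essentially how the paper argues (phrased as $\q_{_{DD}} = \q_{_{\sigma(D)\sigma(D)}}$ forcing $\sigma(D)$ into a single block). Your finish --- each $\sigma(D)$ lies in a single block, then apply the same to $\sigma^{-1}$ to get equality --- is a valid variant of the paper's counting argument $\sum_D |\sigma(D)| \leq \sum_D |D|$, and your closing observation that $\mathcal{P}$ is recoverable from the entry values of the minors is a clean way to package the whole step. So: right approach, correct skeleton, but the central contradiction needs to be stated directly rather than deferred.
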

\begin{proof}
    The claim follows from
    \cref{ConstantOffDiagonalBlocks} after verifying that 
    $\Stab(\q)$ fixes $\mathcal{P}$.
    Take \(\sigma \in \Stab(\q)\) and
   \(D\in \mathcal{P}\). 
   Since \(D\) is a disjoint union of blocks of \(\q\)
   and \(\sigma\) preserves the sizes of those blocks, \(|\sigma(D)| = |D|\).
   Thus
   \(\q_{_{DD}} = \q_{_{\sigma(D)\sigma(D)}}\)
    as \(\sigma\) stabilizes $\q$.
   Since \(\q_{_{DD}}\) shares no entries with off-diagonal $\mathcal{P}$-minors, it follows that \(\sigma(D)\) is a subset of a single block of $\mathcal{P}$.
   Then \(\sum_{D \in \mathcal{P}}|\sigma(D)| \leq \sum_{D \in \mathcal{P}}|D|\), 
   which implies
   \(\sigma(D)\) is again a block of $\mathcal{P}$
   for each block $D$ of $\mathcal{P}$.
   Thus $\sigma \mathcal{P} = \mathcal{P}$.
\end{proof}

\vspace{1ex}

\begin{remark}
\label{UniponentAutos}
{\em

We write  $\Aut_{\uni}(S_\q(V))$ 
for the set of unipotent automorphisms
of $S_\q(V)$, i.e.,
$\phi$ in $\Aut(S_\q(V))$
such that 
$\phi(v_i)$ is $v_i$ plus terms of
graded degree $>1$
for all $i$,
see
Ceken, Palmieri, Wang, Zhang \cite{Ceken2016}. 
If \(\q\) 
has no rows of all \(1\)'s, then 
by
\cite[Lemma~3.2]{Ceken2016}
$$
\Aut(S_\q(V)) 
\ =\ 
\Aut_{\uni}(S_\q(V))
\,\rtimes\, 
\Aut_{\gr}(S_\q(V)) 
\, ,
$$
and if there are no nontrivial algebraic relations among the parameters \(q_{ij}\), 
then \(\Aut_{\uni}(S_\q(V))\) is trivial and $\Aut(S_\q(V)) = 
\Aut_{\gr}(S_\q(V))$
by \cite[Theorem~3.4]{Ceken2016}. 
See also
Yakimov \cite[Corollary~3.7]{Yakimov2014}. 
} 
\end{remark}

\vspace{1ex}


\section{Infinite Families of 
Graded Automorphism Groups}
\label{InfiniteFamiliesSection}
In this section, we exhibit certain families of 
monomial graded automorphism groups.
We will see
in \cref{EarlierTable} 
that
 nonmonomial graded automorphism groups are 
determined by monomial groups acting in lower dimension,
so these families are helpful for classification results.
These families have the form 
$\{\k^n \rtimes G_n\}$
for $G_n$ a subgroup of $\mathfrak{S}_n$,
with each group arising as 
$\Aut_{\gr}(S_\q(\k^n))$
for some \(n \times n\) 
quantum parameter matrix \(\q\).

\begin{prop}\label{SymmetricGroupFamily}
For $n\geq 1$ and 
 \(\textup{char}(\k) \neq 2\), there is a unique $n \times n$ quantum parameter matrix \(\q\) 
with
    $\Stab(\q)=\mathfrak{S}_n$. This yields the isomorphism
    $$
    \Aut_{\gr}(S_\q(\k^n)) 
   \ \cong\ (\k^\times)^n\, \rtimes\, \mathfrak{S}_n
    \, . 
    $$
    In fact, \(\mathfrak{S}_n\) is the only \(2\)-transitive permutation group 
    acting on $[n]$ which is \( \Stab(\q)\)
    for some quantum parameter matrix \(\q\).
\end{prop}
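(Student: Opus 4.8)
The plan is to determine $\q$ explicitly in each case, so that the group computation becomes a direct application of \cref{BlockMatrixDecomposition}, and to handle the $2$-transitive classification by analyzing the orbit structure of $\Stab(\q)$ on $[r]^2$.

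For the first assertion, I would start from the observation that $\Stab(\q) = \mathfrak{S}_n$ forces $r = |\p_\q| = n$: since $\Stab(\q)$ is a subgroup of $\mathfrak{S}_r$ and $r \le n$, comparing orders gives $r = n$, so every block of $\p_\q$ is a singleton and $\q$ is an $n \times n$ matrix with pairwise distinct rows. The condition $\sigma\q = \q$ for all $\sigma$ then says that $q_{ij}$ depends only on the diagonal $\mathfrak{S}_n$-orbit of $(i,j)$ in $[n]^2$; for $n \ge 2$ there are exactly two such orbits, so $q_{ii} = 1$ and $q_{ij} = q$ for all $i \ne j$, where $q^2 = 1$ follows from $q_{ij}q_{ji} = 1$. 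Since $\characteristic(\k) \ne 2$ we get $q = \pm 1$, and $q = 1$ is impossible because the all-ones matrix has $r = 1 \ne n$; hence $\q$ is forced to be the matrix with $1$ on the diagonal and $-1$ elsewhere. Conversely this matrix has pairwise distinct rows and is visibly invariant under $\mathfrak{S}_n$, so it does realize $\Stab(\q) = \mathfrak{S}_n$ (the case $n = 1$ being immediate). Finally, since all blocks are singletons, $\prod_{B \in \p_\q}\GL(V_B) = (\k^\times)^n$, and \cref{BlockMatrixDecomposition} yields $\Aut_{\gr}(S_\q(\k^n)) \cong (\k^\times)^n \rtimes \mathfrak{S}_n$.

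For the last assertion, suppose $G = \Stab(\q)$ acts $2$-transitively on its $r = |\p_\q|$ blocks. Then $G$ is transitive on the blocks, so they all have a common size $k$, and the diagonal action of $G$ on $[r]^2$ has exactly two orbits, the diagonal and its complement. By \cref{BlockAutosSubsetMaximal} (equivalently, straight from the definition of $\Stab(\q)$), each minor $\q_{B_iB_j}$ is constant on $G$-orbits, so it takes only two values: every diagonal minor $\q_{B_iB_i}$ is the all-ones matrix, because rows lying in the same block of $\p_\q$ coincide and hence every quantum parameter indexed inside a block equals $1$; and every off-diagonal minor $\q_{B_iB_j}$ with $i \ne j$ equals one fixed constant matrix of value $q$, with $q^2 = 1$ since $(i,j)$ and $(j,i)$ lie in the same $G$-orbit and $q_{ab}q_{ba} = 1$. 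As before $q = \pm 1$; the value $q = 1$ forces $\q$ to be all-ones, whence $r = 1$ and $n = 1$, a trivial case. So $q = -1$, meaning $\q$ is the block matrix with all-ones diagonal blocks of size $k$ and $-1$ in every off-block position; its partition $\p_\q$ consists precisely of the blocks $B_i$, and every permutation of the $r$ blocks stabilizes $\q$. Therefore $G = \Stab(\q) = \mathfrak{S}_r = \mathfrak{S}_n$, as claimed.

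I expect the proof to be largely bookkeeping. The one step to carry out carefully is the exclusion of $q = 1$, which collapses the partition $\p_\q$ to a single block and is precisely where the hypothesis $\characteristic(\k) \ne 2$ enters. The only mildly substantive point is recognizing that $2$-transitivity forces the $G$-action on $[r]^2$ down to just two orbits, after which the single relation $q^2 = 1$ determines $\q$ completely.
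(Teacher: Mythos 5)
Your proof is correct. For the first claim your argument is essentially the paper's: $\Stab(\q)=\mathfrak{S}_n$ forces $r=n$ (distinct rows), invariance under all of $\mathfrak{S}_n$ forces a single off-diagonal value $q$ with $q^2=1$, and $q=1$ is excluded because it collapses the rows; one small relocation is that $q=\pm 1$ already follows from $q^2=1$ over any field, and the hypothesis $\characteristic(\k)\neq 2$ is really used to guarantee $-1\neq 1$, so that the resulting matrix genuinely has distinct rows. For the $2$-transitivity claim you take a different, more computational route: you pin down $\q$ explicitly (all-ones diagonal minors, a single off-diagonal constant forced to be $-1$) and then verify directly that the stabilizer of this matrix is the full symmetric group on the blocks. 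The paper instead invokes the maximality characterization from \cref{BlockAutosSubsetMaximal}: a $2$-transitive $G$ has the same orbits on $[n]^2$ as $\mathfrak{S}_n$, so maximality of $\Stab(\q)$ among subgroups with orbits refining $\mathcal{P}$ forces $\mathfrak{S}_n\subseteq\Stab(\q)=G$, with no need to determine $\q$. Your version is self-contained and re-proves uniqueness of $\q$ in the $2$-transitive setting; the paper's is shorter and applies verbatim to any two subgroups with equal orbit partitions. One cosmetic point: your concluding step $\mathfrak{S}_r=\mathfrak{S}_n$ requires $r=n$, which holds exactly because the hypothesis is that $G$ acts $2$-transitively on $[n]$ itself (so the blocks are singletons); since you allow blocks of common size $k$, you should either fix $k=1$ at the outset or state the conclusion as $G=\mathfrak{S}_r$.
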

\begin{proof}
    By the 2-transitivity of $\mathfrak{S}_n$, $\Stab(\q) = \mathfrak{S}_n$ if and only if 
    the rows of \(\q\) are distinct 
    and \(q_{ij} = q_{\ell m}\) 
    for all \(i \neq j\) and \(\ell \neq m\). 
    Thus, $\Stab(\q) = \mathfrak{S}_n$ exactly when \(q_{ij} = -1\) for all \(i \neq j\) since \(q_{ji}=q_{ij} = q_{ji}^{-1}\) implies  \(q_{ij} = -1\) for all \(i \neq j\) when 
    the rows are distinct, and the isomorphism
    follows  from
\cref{BlockMatrixDecomposition}.
    For the second claim, 
    we use \cref{BlockAutosSubsetMaximal} 
    after noting that    
    \([n]^2/G = [n]^2/\mathfrak{S}_n\)
    for any 2-transitive group $G$.
    \end{proof}

\vspace{1ex}

\begin{remark}{\em 
    For \(n > 3\),
    \cref{SymmetricGroupFamily} shows that no quantum parameter matrix \(\q\) exists with
    $\Aut_{\gr}(S_\q(V))  \cong \prod_{B\in \p_\q}\GL(V_B)
    \,\rtimes\, {\rm Alt}_n
    $
    for $V=\k^n$ and
    $\Stab(\q) =  \rm{Alt}_n$, 
    the alternating group on \(n\) elements,
  as  \({\rm Alt}_n\) is 2-transitive on $[n]$.
    }
\end{remark}

\vspace{1ex}

We now consider the dihedral group \(D_{2n}\) 
    of order \(2n\).
\begin{prop}\label{DihedralFamily}
For \(\textup{char}(\k) \neq 2\) 
and \(n \geq 4\), 
there exists a quantum parameter matrix 
\(\q\) such that 
\(\Aut_{\gr}(S_\q(\k^n)) \cong (\k^\times)^n \rtimes D_{2n} \).
    \end{prop}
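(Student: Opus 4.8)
The plan is to reduce everything to a single explicit matrix via \cref{BlockAutosSubsetMaximal}. That theorem says that if an $n\times n$ quantum parameter matrix $\q$ has pairwise distinct rows --- equivalently $\p_\q$ is the partition of $[n]$ into singletons, so the associated integer partition is $(1^n)$ --- then $\Aut_{\gr}(S_\q(\k^n))\cong(\k^\times)^n\rtimes\Stab(\q)$, where $\Stab(\q)$ is the \emph{maximal} subgroup $G$ of $\mathfrak{S}_n$ for which $[n]^2/G$ refines the partition $\mathcal{P}$ of $[n]^2$ defined by $(i,j)\sim(\ell,m)$ iff $q_{ij}=q_{\ell m}$. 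So it suffices to exhibit, for each $n\geq 4$, a quantum parameter matrix $\q$ with distinct rows and $\Stab(\q)=D_{2n}$, where $D_{2n}\subseteq\mathfrak{S}_n$ is taken to be the symmetry group of the regular $n$-gon with vertices $1,2,\dots,n$ in cyclic order (identifying $[n]$ with $\mathbb{Z}/n$).

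The construction I would use: for $n\geq 5$, set $q_{ij}=-1$ when $i-j\equiv\pm 1\pmod n$ and $q_{ij}=1$ otherwise; for $n=4$, set $q_{ij}=-1$ when $i-j\equiv 2\pmod 4$ and $q_{ij}=1$ otherwise. In each case $q_{ii}=1$, and $q_{ij}=q_{ji}\in\{\pm 1\}$ since the defining congruence is invariant under $i-j\mapsto j-i$, so $q_{ij}q_{ji}=1$ and $\q$ is a genuine quantum parameter matrix; the hypothesis $\textup{char}(\k)\neq 2$ enters here, ensuring $-1\neq 1$. Each row of $\q$ is a cyclic shift of the length-$n$ word $w=(h(0),h(1),\dots,h(n-1))$, where $h(t)$ is $-1$ or $1$ according to whether $t$ satisfies the defining congruence; this word is fixed by no nontrivial cyclic rotation --- for $n\geq 5$ its two entries equal to $-1$ sit at positions $1$ and $n-1$, and for $n=4$ there is a single $-1$, at position $2$ --- so the $n$ rows are pairwise distinct and $\p_\q$ consists of singletons.

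It remains to compute $\Stab(\q)$. The inclusion $D_{2n}\subseteq\Stab(\q)$ is immediate: a rotation $t\mapsto t+a$ fixes every difference $i-j$ and a reflection $t\mapsto b-t$ sends $i-j$ to $-(i-j)$, and the condition determining $q_{ij}$ depends only on $i-j$ modulo sign. For the reverse inclusion I would invoke the maximality in \cref{BlockAutosSubsetMaximal}: $\mathcal{P}$ has exactly the two blocks $\{(i,j):q_{ij}=-1\}$ and $\{(i,j):q_{ij}=1\}$ (both nonempty as $\textup{char}(\k)\neq 2$), so every $\sigma\in\Stab(\q)$ must preserve the first one. That block is the edge set of a graph $\Gamma$ on $[n]$: the $n$-cycle $C_n$ when $n\geq 5$, and the perfect matching with edges $\{1,3\}$ and $\{2,4\}$ when $n=4$. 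Hence $\Stab(\q)\subseteq\Aut(\Gamma)$, and since $\Aut(C_n)=D_{2n}$ for $n\geq 3$ while the automorphism group of the four-vertex perfect matching is the order-$8$ dihedral group of the square on vertices $1,2,3,4$, we conclude $\Stab(\q)=D_{2n}$ and therefore $\Aut_{\gr}(S_\q(\k^n))\cong(\k^\times)^n\rtimes D_{2n}$.

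The only real subtlety --- and the reason $n=4$ needs separate treatment --- is that $[n]^2/D_{2n}$ refining $\mathcal{P}$ forces $q_{ij}$ to be constant on $D_{2n}$-orbits of ordered pairs, and because $(i,j)$ and $(j,i)$ always share an orbit this pins every off-diagonal entry to $\{\pm 1\}$, leaving only two available values. One is then choosing which $D_{2n}$-orbit of unordered pairs to mark with $-1$ so that simultaneously the resulting graph has automorphism group exactly $D_{2n}$ and the associated circulant word $w$ is aperiodic; the natural ``distance-one'' choice (giving $C_n$) satisfies the first requirement for all $n\geq 3$ but fails aperiodicity precisely at $n=4$, where $w=(1,-1,1,-1)$ has period $2$, which is why that case is handled with the ``distance-two'' orbit instead.
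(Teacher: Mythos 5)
Your proposal is correct and uses exactly the same quantum parameter matrices as the paper: $q_{ij}=-1$ iff $i-j\equiv\pm1\pmod n$ for $n\geq 5$, and the ``distance-two'' matrix for $n=4$. The only difference is in establishing $\Stab(\q)\subseteq D_{2n}$, where the paper carries out an explicit telescoping computation showing $\sigma(i+1)-\sigma(i)\equiv\pm1$ propagates with a fixed sign, while you observe that $\Stab(\q)$ preserves the set $\{(i,j):q_{ij}=-1\}$ and invoke the standard fact that $\Aut(C_n)=D_{2n}$ (and that the two-edge matching on four vertices has automorphism group $D_8$); this is a legitimate and slightly slicker packaging of the same verification.
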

    \begin{proof}
        For \(n = 4\), let
        $ \q $  
        $=\begin{scriptsize}       
    \begin{smallpmatrix}
            1 & 1 & -1 & 1 \\
            1 & 1 & 1 & -1 \\
            -1 & 1 & 1 & 1 \\
            1 & -1 & 1 & 1
        \end{smallpmatrix}
        \end{scriptsize}$.
        Since all the rows of \(\q\) are distinct, by \cref{BlockMatrixDecomposition},
        \[
        \Aut_{\gr}(S_\q(\k^4)) = (\k^\times)^4\, \rtimes\, 
        \{\sigma \in \mathfrak{S}_4:\ q_{ij} = q_{\sigma(i)\sigma(j)}
        \  \text{ for } 1 \leq i,j \leq 4\}
        \, .
        \]
        A short calculation shows that 
        the group on the right is generated by
        $(1\, 2\, 3\, 4)$ and \((1\, 3)\) 
        and thus
        \[\Aut_{\gr}(S_\q(\k^4)) 
        \ \cong\
        (\k^\times)^4\, \rtimes\, 
        \langle (1\, 2\, 3\, 4), (1\, 3) \rangle \ \cong\
        (\k^\times)^4\, \rtimes\, D_{8}\, .
        \]
        
        For \(n > 4\), let \(\q\) be the \(n \times n\) quantum parameter matrix in which every entry is \(1\) except for the superdiagonal and subdiagonal entries, and the top-right and bottom-left entries, which are \(-1\):
        \[\q = \begin{smallpmatrix}
            \ 1 & -1 & \ 1 & \hdots & \ 1 & -1 \\
            -1 & \ 1 & -1 & \hdots & \ 1 & \ 1 \\
           \ 1 & -1 & \ 1 & \ddots & \ 1 & \ 1 \\
            \vdots & \vdots & \ddots & \ddots & \vdots & \vdots \\
            \ 1 & \ 1 &\ 1 & \hdots & \ 1 & -1 \\
             -1 & \ 1 & \ 1 & \hdots & -1 & \ 1
        \end{smallpmatrix}\, .\]
        As the rows of \(\q\) are distinct, 
        \(\Aut_{\gr}(S_\q(\k^n)) \cong (\k^\times)^n \rtimes \Stab (\q)
        \)
        by \cref{BlockMatrixDecomposition},
        where $\Stab (\q)$
         is $\{\sigma \in \mathfrak{S}_n: q_{ij} 
        = q_{\sigma(i)\sigma(j)}\, \text{for } 1 \leq i,j \leq n\}\).
        In the following calculations, we take 
        indices mod n. 
        The group
        \(D_{2n}\) is generated by
        \((1\, 2\, \, \cdots\, \, n)\) 
        and 
        \(\tau := (1\ n)(2\ n-1) \dots \), 
        the product of 2-cycles defined by \(\tau(i) = 1-i \text{ mod } n\). 
        These lie in \(\Stab (\q)\)
        as \(\q\) is constant on 
        super diagonals and is symmetric. 
        
   Conversely, suppose
        $\sigma$ lies in $\Stab (\q)$.
    Then
        $-1=q_{i (i+1)}=q_{\sigma(i)\sigma(i+1)}$
        so our choice of $\q$ implies that
        $\sigma(i+1)-\sigma(i)
        \equiv 
        \pm 1
        \text{ mod } n$
        for $1\leq i<n$.
        But 
        $\sigma(i+1)-\sigma(i)\equiv 1$
        implies
        $\sigma(i+2)-\sigma(i+1)\equiv 1$
        as well,
        otherwise 
        $\sigma(i+2)\equiv 
        \sigma(i+1)-1
        \equiv \sigma(i)+1-1\equiv \sigma(i)$
        which is impossible.
        Likewise
$\sigma(i+1)-\sigma(i)\equiv -1$
        implies
        $\sigma(i+2)-\sigma(i)\equiv -1$
        as well.
     Then as
        $\sigma(i)$ is the telescoping sum
        $\sigma(1)+
        (\sigma(2)-\sigma(1)) +
        \ldots + 
        (\sigma(i)-\sigma(i-1))$,
 either
        $\sigma(i) \equiv 
        \sigma(1)+(i-1) \text{ mod } n$
        for all $1\leq i \leq n$
        or else
        $\sigma(i) \equiv 
        \sigma(1)-(i-1)
        \text{ mod } n$
                for all $1\leq i \leq n$.
        In the first case,
 $\sigma$
        is a power of
        $(1\, 2\, \cdots\, n)$,
        and in the second,
        a power of
        $(1\, 2\, \cdots\, n)$
                multiplied by
        $\tau$.
        In either case, $\sigma$ lies in $D_{2n}$.
    \end{proof}

Lastly, we turn to cyclic groups.

\begin{prop}\label{CyclicAutoGroups}
 For $n\geq 1$, let $G$ be a cyclic subgroup of \(\mathfrak{S}_n\).
    For \(|\k|\) sufficiently large, with \(\textup{char}(\k) \neq 2\) whenever $|G|$ is even, there is a quantum parameter matrix \(\q\) with \(\Aut_{\gr}(S_\q(\k^n)) \cong (\k^\times)^n \rtimes G\).
\end{prop}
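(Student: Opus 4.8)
The plan is to realize $(\k^\times)^n\rtimes G$ by producing a single quantum parameter matrix $\q$ with pairwise distinct rows and $\Stab(\q)=G$; then $\p_\q$ consists of singletons, and \cref{BlockMatrixDecomposition} gives $\Aut_{\gr}(S_\q(\k^n))\cong(\k^\times)^n\rtimes\Stab(\q)$, with $\Stab(\q)$ acting on $(\k^\times)^n$ by exactly the given permutation action of $G$ on $[n]$ --- this is the first extreme case recorded after \cref{BlockAutosSubsetMaximal}. So it suffices to construct such a $\q$.

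Fix a generator $\sigma$ of $G$ with cycles $C_1,\dots,C_r$ of lengths $n_1,\dots,n_r$, so $|G|=d:=\lcm(n_1,\dots,n_r)$, and consider the diagonal action of $G$ on $[n]^2$. For a $G$-orbit $\mathcal O$ write $\mathcal O^{*}=\{(j,i):(i,j)\in\mathcal O\}$. Every orbit is of exactly one of three types: a diagonal orbit $\{(x,x):x\in C_\ell\}$ (these are precisely the orbits meeting the diagonal of $[n]^2$); a self-transpose orbit $\mathcal O=\mathcal O^{*}$ lying off the diagonal (whose existence forces some $n_\ell$, hence $d$, to be even); or an ordered pair $\{\mathcal O,\mathcal O^{*}\}$ of distinct transpose orbits. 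I would then define $\q$ to be constant on $G$-orbits by putting $q_{ij}=1$ on the diagonal orbits, $q_{ij}=-1$ on the self-transpose off-diagonal orbits --- using $\characteristic(\k)\neq 2$, which by the parenthetical above is in force whenever such orbits occur --- and, enumerating the transpose pairs as $\{\mathcal O_1,\mathcal O_1^{*}\},\dots,\{\mathcal O_p,\mathcal O_p^{*}\}$, choosing parameters $a_1,\dots,a_p\in\k^\times$ and setting $q_{ij}=a_t$ on $\mathcal O_t$ and $q_{ij}=a_t^{-1}$ on $\mathcal O_t^{*}$. Then $\q$ is a quantum parameter matrix and is $\sigma$-invariant, so $G\subseteq\Stab(\q)$ no matter how the $a_t$ are chosen.

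Since $p\le\binom{n}{2}$, for $|\k|$ large enough (say $|\k|>n^2$) I can pick the $a_t$ one at a time so that the values $1,-1,a_1,a_1^{-1},\dots,a_p,a_p^{-1}$ are pairwise distinct. Then the level sets of $\q$ are exactly $\q^{-1}(1)=\{(x,x):x\in[n]\}$, $\q^{-1}(-1)=$ the union of the self-transpose orbits, $\q^{-1}(a_t)=\mathcal O_t$, and $\q^{-1}(a_t^{-1})=\mathcal O_t^{*}$; in particular $q_{ji}\neq1$ for $j\neq i$, so rows $i$ and $j$ already differ in position $i$ and the rows of $\q$ are automatically distinct. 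To get $\Stab(\q)\subseteq G$, take $\tau\in\Stab(\q)$: by definition $\tau$ (acting diagonally on $[n]^2$) preserves every level set of $\q$, hence fixes each transpose-pair orbit $\mathcal O_t$ setwise. I would finish in three steps. $(i)$ $\tau$ permutes the cycles $C_\ell$ (which index the diagonal orbits) and in fact fixes each of them, because for $\ell\neq m$ every cross orbit inside $C_\ell\times C_m$ is a transpose pair and hence $\tau$-stable. $(ii)$ Having $\tau(C_\ell)=C_\ell$, the $\tau$-stability of the within-$C_\ell$ transpose pair of ``difference one'' (or triviality when $n_\ell\le2$) forces $\tau|_{C_\ell}=(\sigma|_{C_\ell})^{c_\ell}$ for some $c_\ell$. $(iii)$ The $\tau$-stability of the cross orbits then forces $c_\ell\equiv c_m\pmod{\gcd(n_\ell,n_m)}$ for all $\ell,m$, so by the Chinese Remainder Theorem there is a $k$ with $c_\ell\equiv k\pmod{n_\ell}$ for all $\ell$, i.e.\ $\tau=\sigma^{k}\in G$. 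Applying \cref{BlockMatrixDecomposition} then completes the proof.

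The hard part is the inclusion $\Stab(\q)\subseteq G$: because $\Stab(\q)$ is always the automorphism group of the ``equal-entry'' relations on $[n]^2$, this is precisely the statement that a cyclic permutation group is recovered from its orbits on ordered pairs, whose combinatorial core is the congruence fact used in step $(iii)$ --- pairwise-compatible residues modulo the $n_\ell$ lift to a common residue modulo their lcm. One must also dispatch the degenerate configurations in which some of the three orbit types are absent (a single $n$-cycle, fixed points, $2$-cycles, and the case $\characteristic(\k)=2$, which by construction can only arise when $d$ is odd), but these are routine.
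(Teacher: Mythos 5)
Your proposal is correct and is essentially the paper's proof in different clothing: the paper's congruence conditions ($q_{ij}=q_{\ell m}$ iff $j-i\equiv m-\ell$ mod $c_i$ or mod $\gcd(c_i,c_j)$) are exactly the description of the $G$-orbits on $[n]^2$, so your "constant on orbits with generically distinct values" matrix is the same construction, and your converse (cycle-preservation, then residue compatibility $e_\ell\equiv e_m \bmod \gcd(n_\ell,n_m)$, then the Chinese Remainder Theorem for non-coprime moduli) is the same argument, with your centralizer-of-a-cycle step being a mild repackaging of the paper's within-cycle congruences.
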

\begin{proof}
Suppose $\sigma$ in $\mathfrak{S}_n$ generates $G$.
First note that 
    \(\sigma \, \q = \q\) 
    if and only if 
    \( (\tau \sigma \tau^{-1}) (\tau\, \q) = \tau\, \q\)
    for all $\tau$ in $\mathfrak{S}_n$,
    with \(\tau\,\q\) another quantum parameter matrix. Thus, without loss of generality, we may assume \(\sigma\) has a disjoint cycle decomposition
    \[\sigma 
    = 
    (1\ 2\ \cdots \ m_1)(m_1 + 1 \ \cdots \ m_2)
    \ \cdots\ 
    (m_{r-1} + 1\ \cdots\ n)\, \]
    for some $m_i$.
    For \(1 \leq i,j \leq n\), set $c_i$ to be the length of the cycle
    containing $i$ and write
    $i\sim j$ for $i$ and $j$ in same
    orbit, i.e., in same cycle, so \(\sigma(i) \equiv i + 1 \textup{ mod } c_i\).
    For \(1 \leq i \leq j \leq n\) with \(i \sim j\),
    select $q_{ij}\neq 0$ 
    satisfying
    the rules $q_{ij} = 1$ if $i=j$, 
    $q_{ij} = -1$ if $j-i = c_i/2$, 
    and, for all \( \ell \leq m \) with \(i \sim \ell \sim m\), $q_{ij} = q_{\ell m}$
    exactly when 
    $j-i \equiv m-\ell \text{ mod } c_i$.
    After these have been chosen, for \(i \not \sim j\), select $q_{ij}\neq 0$ satisfying the rule that, 
    for all \(\ell \leq m\), 
    $q_{ij}=q_{\ell m}$ 
    exactly when
    $i \sim \ell$, $j \sim m$,
    and $j-i\equiv m-\ell \text{ mod } 
    \gcd(c_i, c_j)$.
    Set $q_{ji}=q_{ij}^{-1}$ and
    assume further that the $q_{ij}$ are chosen
    so that $q_{ij} \neq q_{m\ell}$
    for all $i\leq j$ with $i\not \sim j$
    and all $\ell \leq m$
    to guarantee that $\Stab(\q)$ is not too large.
    Observe that \(\q = (\q_{ij})\) is 
    then a quantum parameter matrix.

    We now argue that \(\Aut_{\gr}(S_\q(\k^n)) \cong (\k^\times)^n \rtimes G\).
    All rows of \(\q\) are distinct, since \(1\) only appears as an entry of \(\q\) on the diagonal. Thus
    \(\Aut_{\gr}(S_\q(\k^n)) \cong (\k^\times)^n \rtimes \Stab (\q)\) by \cref{BlockMatrixDecomposition}.
    We argue that $G=\Stab (\q)$.
    For fixed \(i < j\), \(i \sim \sigma(i)\) and \(j \sim \sigma(j)\). 
    Furthermore, \(\sigma(i) - i \equiv 1 \textup{ mod } c_i\) and \(\sigma(j) - j \equiv 1 \textup{ mod } c_j\) and so \(\sigma(j) - \sigma(i) \equiv j - i \textup{ mod } \gcd(c_i, c_j)\).
    Thus, \(q_{ij} = q_{\sigma(i)\sigma(j)}\)
    and \(\langle \sigma \rangle \subset \Stab (\q)\).
    Conversely, suppose \(\tau \in \Stab (\q)\). 
    Then \(q_{\tau(i)\tau(j)} = q_{ij}\)
    and
    \(
    \tau(j) - \tau(i) \equiv j - i \
    \textup{ mod } \gcd(c_i, c_j)
       \)
    so 
    $$ \tau(j) - j \ \equiv\ \tau(i) - i 
    \mod
    \gcd(c_i, c_j)
     \qquad\text{ for all } i,j.
    $$
    By the Chinese Remainder Theorem for non co-prime moduli, see~\cite{OreGCRT}, 
    there exists a positive integer \(N\) 
    (unique modulo \(\lcm(c_1, \hdots, c_n)=|G|\))
    with
    \(\tau(i) - i \equiv N \textup{ mod } c_i\) for all \(i\).
    Hence \(\tau = \sigma^N \in G\).
    Thus, \(\Stab (\q) = G\) and so \(\Aut_{\gr}(S_\q(\k^n)) \cong (\k^\times)^n \rtimes G\).
\end{proof}

\vspace{1ex}

\begin{example}{\em 
For $G$ the cyclic group generated
by \((1\,2\,3\,4\,5\,6)(7\,8)\),
 \(|\k| \geq 15\), and \(\textup{char}(\k) \neq 2\), the last proof gives
\(\Aut_{\gr}(S_\q(\k^9)) = (\k^\times)^9
\rtimes G\) for \(a,b,c,d,e,f\) and their inverses distinct in \(\k \) and
\begin{tiny}
\[
\q =
\left(
\begin{array}{cccccc|cc|c}
  1 & a & b & -1 & b^{-1} & a^{-1} & c & d & e\\
  a^{-1} & 1& a & b & -1 & b^{-1} & d & c & e\\
  b^{-1} & a^{-1} & 1 & a & b & -1 & c & d & e\\
  -1 & b^{-1} & a^{-1} & 1 & a & b & d & c & e\\
   b & -1 & b^{-1} & a^{-1} & 1 & a & c & d & e\\
 a & b  & -1 & b^{-1} & a^{-1} & 1 & d & c & e\\
   \hline\rule{0ex}{2.5ex}
 c^{-1} & d^{-1} &  c^{-1} & d^{-1} &  c^{-1} & d^{-1} & 1 & -1 & f \\
 d^{-1} & c^{-1} & d^{-1} &  c^{-1} & d^{-1} &  c^{-1}  & - 1 & 1 & f \\
   \hline\rule{0ex}{2.5ex}
  e^{-1} & e^{-1} & e^{-1} &  e^{-1} & e^{-1} &  e^{-1}  & f^{-1} & f^{-1} & 1 \\ 
\end{array}
\right)
\]
\end{tiny}

    }
\end{example}

\vspace{1ex}


\section{Direct Product Decomposition}
\label{DirectProductSection}

We now decompose 
the graded automorphism
group $\Aut_{\gr}(S_\q(V))$
of a quantum affine space
into graded automorphism groups 
of lower dimensional
spaces
using results in \cref{SemiDirectSection},
and we verify that the set of 
graded automorphism groups
is closed under direct products.
We again fix \(V \cong \k^n\) and an \(n \times n\) quantum parameter matrix \(\q\). 
Various partitions of the set \([n]\) indexing the rows of \(\q\) 
describe automorphisms.
For example, the partition \(\p_\q\) on \([n]\) identifies the ``elementary automorphisms" of \cite{KKZ}, 
and the block-circle decomposition of \([n]\) 
in \cite{KKZ} 
identifies
the important class of mystic reflection subgroups of \(\Aut_{\gr}(S_\q(V))\). 
We construct here a partition $\mathcal{P}$
of \([n]\) giving a decomposition of \(\Aut_{\gr}(S_\q(V))\) 
into a direct product of 
groups 
\(\Aut_{\gr}(S_{\q_{_{DD}}}
(V))\)
ranging over the blocks $D$ of $\mathcal{P}$.

The group \(\Aut_{\gr}(S_\q(V))\) 
is completely determined by the blocks of $\q$
and the stablizing permutation group $\Stab(\q)$.
However, computing $\Stab(\q)$
concretely may be computationally prohibitive.
Indeed, $\Stab(\q)$
can be found with an algorithm of time complexity 
$O(n!)$, and we seek an alternative
description for \(\Aut_{\gr}(S_\q(V))\)
that bypasses that calculation.
The partition $\mathcal{P}$ introduced
here
only requires
an algorithm of time complexity $O(n^2)$, see \cref{PartitionConstruction}.

Observe that
every element $\sigma\in \Stab(\q)$
moves a row of \(\q\) 
to another row which is a permutation of the first. Moreover, if these two rows differ at a column, then \(\sigma\) can not
fix that column.
This motivates the next definition.

\vspace{1ex}

\begin{definition}\label{TransitiveClosure}
{\em
 Define an equivalence relation \(\mathcal{R}\) 
on \([n]\) by taking the symmetric closure of the 
relation with \(i\sim \ell \) 
whenever there is a chain \(i = i_0, i_1 , \dots ,i_p = \ell \)
for some $p\geq 0$ 
with, for all \(1 \leq s \leq p\),
 \begin{enumerate}
     \item the row of \(\q\) indexed by 
     \(i_{s-1}\) is a permutation of the row indexed by \(i_{s}\),
     or
     \item 
     $q_{i_{s-1}i_{s} } 
     \neq q_{i_t i_{s}} \) for some
     $0\leq t < s$.
      \end{enumerate}
    }
\end{definition}

\vspace{1ex}

\begin{thm}\label{DirectProductDecomp}
Let $\q$ be a quantum parameter matrix.
The graded automorphism group breaks into
a direct product over blocks
of the partition \(\mathcal{P}\) 
of $[n]$ given by 
the equivalence classes of
\(\mathcal{R}\):
    \[\Aut_{\gr}(S_\q(V)) \ \cong\
    \prod_{D \in\, \mathcal{P}} \Aut_{\gr}
    \big(S_{\q_{_{DD}}}
    (V_D)\big)
    \, .
    \]
\end{thm}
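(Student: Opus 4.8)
The strategy is to show that the partition $\mathcal{P}$ coming from $\mathcal{R}$ satisfies the hypotheses of \cref{ConstantOffDiagonalBlocks}, so that the stated direct product decomposition (which is the special case of that proposition in which the acting group $G$ is trivial) follows. Concretely, I would verify three things: (i) $\mathcal{P}$ refines $\p_\q$, i.e.\ each block of $\mathcal{R}$ is a union of blocks of $\q$; (ii) $\q$ is constant on every off-diagonal minor for $\mathcal{P}$; and (iii) the induced action of $\Stab(\q)$ fixes $\mathcal{P}$, and moreover the associated group $G$ of \cref{ConstantOffDiagonalBlocks} is trivial.

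For (i): if $i$ and $\ell$ lie in the same block of $\q$, then the row of $\q$ indexed by $i$ equals (hence is trivially a permutation of) the row indexed by $\ell$, so $i\sim\ell$ under clause (1) of \cref{TransitiveClosure} via a one-step chain; thus every block of $\q$ is contained in a block of $\mathcal{R}$. For (ii): let $D\neq E$ be blocks of $\mathcal{P}$, and suppose for contradiction that $\q_{_{DE}}$ is not constant, i.e.\ $q_{i_0 j}\neq q_{i_1 j}$ for some $i_0,i_1\in D$ and $j\in E$. Then clause (2) of \cref{TransitiveClosure} with $t=0$, $s=1$ produces the chain $j=i_0'\,?$ — more precisely, the relation "$q_{i_1 j}\neq q_{i_0 j}$" is exactly the condition making $j\sim i_1$ via the two-element chain $j,i_1$ (reading $i_{s-1}=j$, $i_s=i_1$, $i_t=i_0$... wait: clause (2) for a chain $i=i_0,i_1$ says $q_{i_0 i_1}\neq q_{i_t i_1}$ — one must match column indices carefully). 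The honest version: clause (2) says that $j$ and $i_1$ are $\mathcal{R}$-related whenever there is a column $c$ at which the rows indexed by $j$ and $i_1$ already-linked earlier entries disagree; I would unwind the definition so that non-constancy of $\q_{_{DE}}$ forces some index of $E$ into $D$ (or vice versa), contradicting $D\neq E$. This is the step I expect to require the most care: parsing \cref{TransitiveClosure} precisely and checking that "two rows that are permutations of each other, and differ in a column, cannot be separated into different blocks of $\mathcal{P}$ while that column index also sits in a different block" — the geometric picture from the paragraph before \cref{TransitiveClosure} ("if two rows differ at a column, $\sigma$ cannot fix that column") is the guiding intuition, and clause (2) is precisely designed to absorb such column indices into the block.

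For (iii): by \cref{BlockMatrixDecomposition} it suffices to show $\Stab(\q)$ fixes $\mathcal{P}$ and acts trivially. Take $\sigma\in\Stab(\q)$. Since $\sigma\q=\q$, the row of $\q$ indexed by $\sigma(i)$ is a permutation (by $\sigma$ itself) of the row indexed by $i$, so $i$ and $\sigma(i)$ are $\mathcal{R}$-related via clause (1); hence $\sigma(i)$ lies in the same block of $\mathcal{P}$ as $i$ for every $i$, which says both that $\sigma$ fixes $\mathcal{P}$ setwise and that the induced permutation of the blocks of $\mathcal{P}$ is the identity. Therefore the group $G$ of \cref{ConstantOffDiagonalBlocks} applied to this $\mathcal{P}$ is trivial, and that proposition yields
\[
\Aut_{\gr}(S_\q(V)) \ \cong\ \prod_{D\in\mathcal{P}}\Aut_{\gr}\big(S_{\q_{_{DD}}}(V_D)\big)\,\rtimes\,\{1\}\ \cong\ \prod_{D\in\mathcal{P}}\Aut_{\gr}\big(S_{\q_{_{DD}}}(V_D)\big),
\]
as claimed. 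The remaining loose end is to confirm that $\mathcal{R}$ is genuinely an equivalence relation (reflexivity and transitivity are built into the chain definition; symmetry is imposed by taking the symmetric closure) and that $V=\bigoplus_{D\in\mathcal{P}}V_D$, both routine.
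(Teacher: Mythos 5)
Your overall strategy is the paper's: verify the hypotheses of \cref{ConstantOffDiagonalBlocks} for the partition $\mathcal{P}$ of equivalence classes of $\mathcal{R}$ and then show the residual permutation group is trivial. Parts (i) and (iii) are essentially right (in (iii) you should pass from the block permutation $\sigma\in\Stab(\q)$ to the induced index permutation $\tau\in\mathfrak{S}_n$ before writing ``$\sigma(i)$'', but the observation that $i\sim\tau(i)$ via clause (1) is correct and is in fact a slicker route than the paper's, which instead checks that $\tau$ carries chains to chains). However, part (ii) — that $\q$ is constant on each off-diagonal minor $\q_{_{DE}}$ — is a genuine gap: you explicitly leave it unfinished, and the one concrete attempt you make does not work. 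The two-element chain $j, i_1$ cannot satisfy clause (2) of \cref{TransitiveClosure}: for $s=1$ the only admissible $t$ is $t=0=s-1$, so the required inequality reads $q_{j i_1}\neq q_{j i_1}$, which is false. The correct argument (the paper's) is: for $i,\ell\in D$ and $m\in E$ with $q_{im}\neq q_{\ell m}$, take a chain $i=i_0,\dots,i_p=\ell$ witnessing $i\sim\ell$ \emph{inside} $D$ and append $m$ as $i_{p+1}$; clause (2) at the new step $s=p+1$ is satisfied by choosing $t=0$, since $q_{i_p\, i_{p+1}}=q_{\ell m}\neq q_{im}=q_{i_0\, i_{p+1}}$. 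Hence $m\sim i$ and $m\in D$, contradicting $D\neq E$. (One also needs $q_{ij}=q_{im}$ for $j,m\in E$, obtained symmetrically, and a word about the case where the witnessing chain runs from $\ell$ to $i$ rather than $i$ to $\ell$.) The key point you missed is that the pre-existing chain joining the two rows of $D$ supplies the ``earlier'' index $i_t$ that clause (2) needs; without it the inequality has nothing to compare against.

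A secondary, smaller gap: \cref{ConstantOffDiagonalBlocks} delivers $\Aut_{\gr}(S_\q(V))\cong\prod_D\Aut_{\gr}(S_{\q_{_{DD}}}(V_D))\rtimes G$ where $G$ is defined by its own condition ($|\sigma(D)|=|D|$ and $\q_{_{\sigma(D)\sigma(E)}}=\q_{_{DE}}$ for all $D,E\in\mathcal{P}$), not as the image of $\Stab(\q)$. Showing that every element of $\Stab(\q)$ induces the identity on $\mathcal{P}$ does not by itself kill $G$; you must either invoke the fact (implicit in the proof of \cref{ConstantOffDiagonalBlocks}, via $\pi'\iota'=\mathrm{Id}$ and the surjectivity of $\pi$) that $G$ equals the image of $\Stab(\q)$ under $\psi$, or argue directly as the paper does: for $\sigma\in G$, the condition $\q_{_{\sigma(D)\sigma(E)}}=\q_{_{DE}}$ forces each row indexed in $D$ to be a permutation of a row indexed in $\sigma(D)$, so clause (1) puts $D$ and $\sigma(D)$ in the same $\mathcal{R}$-class and $\sigma(D)=D$.
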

\begin{proof}
    We verify that the partition \(\mathcal{P}\)  satisfies the hypothesis of \cref{ConstantOffDiagonalBlocks} and that the group \(G = \{\sigma \in \mathfrak{S}_{|\mathcal{P}|}: |\sigma(D)| = D \textup{ and } \q_{_{\sigma(D)\sigma(E)}} = \q_{_{DE}} \textup{ for all blocks } D,E \textup{ of } \mathcal{P}\}\) given there is trivial.

    First notice that \(\mathcal{P}\) is refined by \(\p_\q\) since if two rows of $\q$ are equal, then their
    indices are 
    equivalent under $\mathcal{R}$ and lie in the same block of \(\mathcal{P}\).
    Next we observe that 
    \(\q\) is constant on the off-diagonal 
    $\mathcal{P}$-minors.
    To see this, take \(D,E \in \mathcal{P}\) with \(D \neq E\) 
    and let \(i,\ell \in D\) and \(j,m \in E\). 
    Then 
    $q_{im} = q_{\ell m}$
    since otherwise a chain 
    $i=i_0, i_1, \ldots, \ell$ or $\ell=\ell_0,\ell_1,\ldots, i$
    as in \cref{TransitiveClosure}
    could be extended to a chain also satisfying the
    condition of \cref{TransitiveClosure}
    by appending $m$ to the end,
    which would
    force $i$ and $m$ in the same equivalence
    class of $\mathcal{R}$ and $D=E$.
    Similarly, \(q_{im} = q_{ij}\),
    and thus
    \(q_{ij} = q_{im} = q_{\ell m}\). 
    So \(\q\) is constant on the 
off-diagonal $\mathcal{P}$-minor \(\q_{_{DE}}\) of $\q$.

    Let \(\sigma \in \Stab(\q)\). Define \(\tau \in \mathfrak{S}_n\) to be the unique permutation 
    preserving the order of indices within each block 
    with \(\tau(B) = \sigma(B)\) for all blocks \(B\) of $\q$.
    If the row indexed by \(i\) is a permutation of the row indexed by \(j\), 
    say by \(\tau'\), 
    then \(\q_{\tau(i)\tau(m)} = \q_{im} 
    = \q_{j\tau'(m)}
    = \q_{\tau(j)\tau\tau'(m)}\), 
    and so the rows indexed by \(\tau(i)\) and \(\tau(j)\) are equal up to a permutation as well. 
    Similarly, if \(\q_{im} \neq \q_{jm}\), then \(\q_{\tau(i)\tau(m)} \neq \q_{\tau(j)\tau(m)}\). 
    Thus, whenever a chain \(i = i_0, i_1 , \dots , i_m = j\) exists as in \cref{TransitiveClosure},  \(\tau(i) = \tau(i_0), \tau(i_1), \dots \tau(i_m) = j\) is also a chain
    satisfying the definition.
    Thus $\sigma(D) \in \mathcal{P}$  for all \(D \in \mathcal{P}\).

   To see that
   $G$ is trivial, take \(\sigma \in G\) and a block \(D\) of \(\mathcal{P}\). Then \(\q_{_{\sigma(D)\sigma(E)}} = \q_{_{DE}}\) for all blocks \(E\) of \(\mathcal{P}\), which implies that each row indexed by an element of
   \(D\) is a permutation of a row indexed by 
   an element of \(\sigma(D)\).
   Thus the elements of $D$ are equivalent to those of $\sigma(D)$
   under the relation $\mathcal{R}$, 
and \(D = \sigma(D)\) by the definition of \(\mathcal{P}\).
\end{proof}

\vspace{1ex}

\begin{remark}\label{PartitionConstruction}\em{
Let $\mathcal{P}$ denote the partition
giving the equivalence classes of $\mathcal{R}$ (\cref{TransitiveClosure}). From the proof of \cref{DirectProductDecomp}, we see that $\mathcal{P}$ satisfies three key properties:
\begin{enumerate}
    \item $\mathcal{P}$ is refined by $\p_\q$;
    \item indices of rows of \(\q\) that are permutations of each other lie in the same block of $\mathcal{P}$;
    \item for any two blocks $D,E$ of $\mathcal{P}$ with $D \neq E$, the $\mathcal{P}$-minor $\q_{_{DE}}$ is constant.
\end{enumerate}
Observe that $\mathcal{P}$
is the finest partition 
with these properties.
Indeed, we construct \(\mathcal{P}\) 
with the following two step process.
First, we construct the partition of \([n]\) 
whose blocks 
index rows of \(\q\) which are permutations of each other.
We do this by constructing the corresponding multiset of entries for each row, hashing these multisets, and then grouping indices by hash value.
This has $O(n^2)$ time complexity for multiset construction and $O(n)$ time complexity for the grouping.

For the second step, we successively merge blocks $D,E$  whenever \(q_{_{DE}}\) is not a constant matrix.
The algorithm terminates with a partition $\mathcal{P}$ such that the $\mathcal{P}$-minor $\q_{_{DE}}$ is constant for all distinct blocks $D,E$ of $\mathcal{P}$. This step has $O(n^2)$ time complexity since the number of comparisons is bounded by the number of entries of $\q$. This results in an algorithm generating $\mathcal{P}$ with $O(n^2)$ time complexity overall.
}
\end{remark}

\vspace{1ex}

\begin{example}{\em 

Suppose $a,b,c,d,f$ and their inverses are distinct in $\k$
and
$$\q = \begin{smallpmatrix}
    1 & -1 & a & b & c & c & f & f\\
    -1 &1 & b & a & c & c & f & f\\
    a^{-1} & b^{-1} & 1 & -1 & c & c & d & f \\
    b^{-1} & a^{-1} & -1 & 1 & c & c & f & d \\
    c^{-1} & c^{-1} & c^{-1} & c^{-1} & 1 & -1 & c^{-1} & c^{-1} \\
    c^{-1} & c^{-1} & c^{-1} & c^{-1} & -1 & 1 & c^{-1} & c^{-1} \\
    f^{-1} & f^{-1} & d^{-1} & f^{-1} & c & c & 1 & -1 \\
    f^{-1} & f^{-1} & f^{-1} & d^{-1} & c & c & -1 & 1
\end{smallpmatrix}
\, .
$$
Note that any \(\sigma\in \Stab(\q)\) can only only permute 
blocks whose rows
are permutations of each other,
so it must fix the sets 
$\{1,2\}$,  $\{3,4\}$,  $\{5,6\}$, and $\{7,8\}$.
We turn to the off-diagonal minors. 
If $\sigma$ interchanges $1$ and $2$,
 then the block $\begin{smallpmatrix}
    a & b \\ b & a
\end{smallpmatrix}$ is preserved 
which implies that $\sigma$
interchanges $3$ and $4$.
 But then the block $\begin{smallpmatrix}
    d & f \\ f & d
\end{smallpmatrix}$ is preserved 
so $\sigma$ must interchange $7$ and $8$.
So $\Stab(\q) 
\subset \langle (1\, 2)(3\, 4)(7\, 8), (5\, 6)\rangle$. The corresponding partition 
$\mathcal{P}$ of \cref{DirectProductDecomp}
has blocks 
$D=\{1,2,3,4,7,8\}$ and $E=\{5,6\}$,
and
$\Aut_{\gr}(S_\q(V))
\cong
\Aut_{\gr}(S_{\q_{_{DD}}}(V_D))
\times
\Aut_{\gr}(S_{\q_{_{EE}}}(V_E))
$.
}
\end{example}

\vspace{1ex}

 \cref{DirectProductDecomp}
implies that the set of
graded automorphism groups 
is closed under direct products.

\begin{cor}
\label{ClosedUnderDirectProducts}
    Let \(V_1\) and \(V_2\) be vector spaces 
    with \(\dim V_1 = n_1\) and \(\dim V_2 = n_2\). 
    Let \(\q_1\) and \(\q_2\) be \(n_1 \times n_1\) and \(n_2 \times n_2\) quantum parameter matrices,
    respectively.
    For \(|\k|\) sufficiently large, there exists a quantum parameter matrix \(\q\) of size \((n_1 + n_2) \times (n_1 + n_2)\) such that 
    \[
    \Aut_{\gr}(S_\q(V_1 \oplus V_2)) 
    \ \cong \  
    \Aut_{\gr}(S_{\q_1}(V_1)) \times \Aut_{\gr}(S_{\q_2}(V_2))
    \, .\]
\end{cor}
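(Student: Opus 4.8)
The goal is to build a single matrix $\q$ on $[n_1+n_2]$ whose "off-diagonal coupling" between the first $n_1$ indices and the last $n_2$ indices is constant, distinct from all entries appearing inside either block, and chosen so that no block permutation can mix the two groups of indices. Then \cref{DirectProductDecomp} (or equivalently the corollary following \cref{ConstantOffDiagonalBlocks}) will immediately give the direct product decomposition, provided the partition $\mathcal{P}$ of $[n_1+n_2]$ arising from $\mathcal{R}$ has exactly the two blocks $D=\{1,\dots,n_1\}$ and $E=\{n_1+1,\dots,n_1+n_2\}$, and that the diagonal minors recover $\q_1$ and $\q_2$ so that $S_{\q_{_{DD}}}(V_D)\cong S_{\q_1}(V_1)$ and $S_{\q_{_{EE}}}(V_E)\cong S_{\q_2}(V_2)$.

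Concretely, I would set $\q_{_{DD}}=\q_1$, $\q_{_{EE}}=\q_2$, and fill the off-diagonal minors with a single new scalar: pick $t\in\k^\times$ with $t\neq \pm1$ and with $t,t^{-1}$ not equal to any entry of $\q_1$ or $\q_2$ (possible once $|\k|$ exceeds roughly $|n_1^2|+|n_2^2|+3$, which is what "$|\k|$ sufficiently large" buys us), and declare $q_{ij}=t$ for $i\in D,\ j\in E$ and $q_{ij}=t^{-1}$ for $i\in E,\ j\in D$. Since $q_{ij}q_{ji}=tt^{-1}=1$ and the diagonal entries are inherited from quantum parameter matrices, $\q$ is again a quantum parameter matrix. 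By construction every off-diagonal minor of $\q$ for the partition $\mathcal{P}_0=\{D,E\}$ is constant, and its (single) entry $t$ or $t^{-1}$ is shared with no diagonal minor. So the hypotheses of the corollary after \cref{ConstantOffDiagonalBlocks} hold for $\mathcal{P}_0$ once we check $\Stab(\q)$ fixes $\mathcal{P}_0$ — but the corollary does that verification for us, or alternatively: any $\sigma\in\Stab(\q)$ must send a row containing the off-diagonal value $t$ to another such row, and a row of $\q$ indexed by $i\in D$ has its $D$-entries drawn from $\q_1\cup\{1\}$ and its $E$-entries all equal to $t$, whereas a row indexed by $i\in E$ has $t^{-1}$ in its $D$-columns; since $t\neq t^{-1}$ these two row-types cannot be interchanged, forcing $\sigma(D)=D$ and $\sigma(E)=E$.

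It then remains to identify the resulting $G$ and the diagonal factors. Applying the corollary (with partition $\mathcal{P}_0$) gives $\Aut_{\gr}(S_\q(V_1\oplus V_2))\cong \Aut_{\gr}(S_{\q_1}(V_1))\times\Aut_{\gr}(S_{\q_2}(V_2))\rtimes G$ where $G=\{\sigma\in\mathfrak S_2: |\sigma(D)|=|D|\text{ and }\q_{_{\sigma(D)\sigma(E)}}=\q_{_{DE}}\text{ for all }D,E\in\mathcal P_0\}$; since the only nontrivial element of $\mathfrak S_2$ would require $|D|=|E|$ and $\q_{_{DE}}=\q_{_{ED}}$, i.e. $t=t^{-1}$, and we chose $t\neq\pm1$, $G$ is trivial (when $n_1\neq n_2$ even the size condition already kills it). Hence $\Aut_{\gr}(S_\q(V_1\oplus V_2))\cong\Aut_{\gr}(S_{\q_1}(V_1))\times\Aut_{\gr}(S_{\q_2}(V_2))$, as claimed. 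Alternatively, one can invoke \cref{DirectProductDecomp} directly: one checks that the equivalence relation $\mathcal{R}$ on $[n_1+n_2]$ never relates an index of $D$ to an index of $E$ — condition (1) fails because a $D$-row has all $E$-columns equal to $t$ while an $E$-row has them equal to $t^{-1}$, so no $D$-row is a permutation of an $E$-row; and condition (2), which would extend a chain across, also cannot jump between $D$ and $E$ because every off-diagonal minor is constant — so $\mathcal{P}=\{D,E\}$ and \cref{DirectProductDecomp} gives the decomposition with $\q_{_{DD}}=\q_1$, $\q_{_{EE}}=\q_2$.

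The only genuinely delicate point is the counting argument for "$|\k|$ sufficiently large": we need $\k^\times$ to contain an element $t$ avoiding the finite set $\{1,-1\}\cup\{q_{ij},q_{ij}^{-1}:\text{entries of }\q_1,\q_2\}$, which has size at most $2(n_1^2+n_2^2)+2$, so $|\k|>2(n_1^2+n_2^2)+3$ suffices; in characteristic $2$ the constraint $t\neq -1$ is vacuous. Everything else — that $\q$ is a quantum parameter matrix, that the diagonal minors reproduce the given algebras, that the chains of $\mathcal R$ stay inside each block — is a routine check. I expect the main (mild) obstacle to be presenting cleanly why no chain of $\mathcal{R}$ crosses between $D$ and $E$, since one must rule out both clause (1) and clause (2) of \cref{TransitiveClosure}, but the constant-and-disjoint choice of the off-diagonal value handles both at once.
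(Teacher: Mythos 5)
Your construction of $\q$ is exactly the paper's: diagonal minors $\q_1,\q_2$ and constant off-diagonal minors filled with a new scalar $t$ (the paper calls it $a$) avoiding $0,\pm1$ and the entries of $\q_1,\q_2$. The difference is in how you conclude. The paper routes the argument through \cref{DirectProductDecomp}: it shows that the partition coming from the relation $\mathcal{R}$ of \cref{TransitiveClosure} is $\mathcal{P}_1\sqcup\mathcal{P}_2$, where $\mathcal{P}_i$ is the $\mathcal{R}$-partition of $\q_i$, and then regroups the resulting product and applies \cref{DirectProductDecomp} again to each $\q_i$. You instead invoke the corollary following \cref{ConstantOffDiagonalBlocks} directly with the two-block partition $\mathcal{P}_0=\{D,E\}$, check that the off-diagonal minors are constant and share no entries with the diagonal minors, and kill the $\mathfrak{S}_2$-factor because $t\neq t^{-1}$. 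This is a legitimate and arguably cleaner route, since it avoids analyzing $\mathcal{R}$ altogether; it does require (as you note) that \emph{both} $t$ and $t^{-1}$ avoid the entries of $\q_1$ and $\q_2$, a slightly stronger choice than the paper makes. One caveat on your ``alternative'' route: your claim that the $\mathcal{R}$-partition of $\q$ equals $\{D,E\}$ is false in general --- if $\q_1$ or $\q_2$ itself decomposes under \cref{DirectProductDecomp}, the partition $\mathcal{P}$ is the strictly finer $\mathcal{P}_1\sqcup\mathcal{P}_2$. This does not damage the conclusion (one regroups the factors exactly as the paper does), but as stated that sentence is inaccurate; your primary argument via the corollary to \cref{ConstantOffDiagonalBlocks} does not suffer from this and stands on its own.
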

\begin{proof}
We take a basis for \(V=V_1 \oplus V_2\) by appending a basis for \(V_2\) 
onto a basis for \(V_1\) after embedding $V_i$ in $V$.
        Select $a$ not in $\{0, -1, 1\}$ 
        nor an entry of both \(\q_1\) and \(\q_2\). 
        We set \(\q = \begin{smallpmatrix}
         \q_1 & A \\
         A' & \q_2
     \end{smallpmatrix}\) 
     where \(A\) is the \(n_1 \times n_2\) constant matrix with all entries \(a\) 
     and \(A'\) 
     is the \(n_2 \times n_1\) constant matrix with all entries \(a^{-1}\). 
     Then \(\q\) is a quantum parameter matrix corresponding to \(V_1 \oplus V_2\).

     Write \(\mathcal{P}_1, \mathcal{P}_2\), and \(\mathcal{P}\) for the partitions of \([n_1], [n_2]\), and \([n_1 + n_2]\) corresponding to \(\q_1, \q_2\), and \(\q\), respectively, 
     arising from the decomposition given by \cref{DirectProductDecomp}.
     
      Fix indices \(i,j,m \in [n_1 + n_2]\).
      Since $A$ and $A'$ are constant matrices, rows $i$ and 
      $j$ of $\q$ are permutations of each other if and only if 
they correspond to permuted rows both in $\q_1$ or both in $\q_2$.
      Therefore, \(\mathcal{P} = \mathcal{P}_1 \sqcup \mathcal{P}_2\). 
         
         Since
     \(\q_{_{DD}} = {(\q_i)}_{_{DD}}\) for \(D \in \mathcal{P}_i\),
     \[
     \prod_{D \in \mathcal{P}}\Aut_{\gr}(S_{\q_{_{DD}}}((V_1 \oplus V_2)_{D})) 
     \ \cong\ 
     \prod_{D \in \mathcal{P}_1}\Aut_{\gr}(S_{{(\q_1)}_{_{DD}}}(V_D))\ \times\ \prod_{D \in \mathcal{P}_2}\Aut_{\gr}(S_{{(\q_2)}_{_{DD}}}(V_D))
     \]
     and thus
          $\Aut_{\gr}(S_\q(V_1 \oplus V_2)) \cong \Aut_{\gr}(S_{\q_1}(V_1)) \times \Aut_{\gr}(S_{\q_2}(V_2))$
          by \cref{DirectProductDecomp}.
\end{proof}

\vspace{1ex}

\begin{remark}\em{
    The requirement that $|\k|$ be sufficiently large in the last proposition is only to guarantee some $a \in \k^\times$ is not an entry of both $\q_1$ and $\q_2$ with $a \neq a^{-1}$.
    \cref{sec: MinimalFieldSize} offers a more detailed discussion on how the cardinality of $\k$ determines which graded automorphism groups appear for some quantum parameter matrix $\q$ with entries in $\k$.
}
\end{remark}

\vspace{1ex}


\section{Kronecker products
of quantum parameters}
\label{TensorProductSection}

We investigate here the graded automorphism
group arising from the  Kronecker product of quantum parameter matrices.
This gives a useful tool for constructing 
quantum affine spaces with desirable automorphism groups.

We fix two quantum parameter matrices
    \(\q \in \Mat(n, \k)\) and 
    \(\q' \in \Mat(n', \k)\) 
    for integers $n, n'\geq 1$
    and index each entry of  $\q \ot \q' $ 
    in $\Mat(n n', \k)$
    by a pair of indices $(i,i')$ and $ (j,j')$,
    setting
    \begin{equation}
    \label{PairsOfOrderedPairs}
    (\q \ot \q')_{(i,i'),(j,j')} 
    = q_{ij}\, q'_{i'j'}\, .
    \end{equation}
    Note that  \(\q \ot \q'\) is again a
    quantum parameter matrix
    and
    defines
        the skew polynomial algebra 
    \(S_{\q \ot \q'}(V \ot V')\) 
    with relations 
    \[
    (v_j \ot w_m)(v_i \ot w_\ell) 
    = 
    q_{ij}\, q'_{\ell m}\
    (v_i \ot w_\ell)(v_j \ot w_m)\quad \textup{ for } 1 \leq i,j\leq n \textup{ and } 1 \leq \ell,m \leq n'
    \, 
    \]
for basis 
$v_1,\ldots, v_{n}$ of $V$ and
basis
$w_1,\ldots, w_{n'}$ of $V'$,
using the basis $v_i\ot w_\ell$ of
$V\ot V'$.
Generically, a pair of quantum parameter matrices $\q$ and $\q'$
are independent 
in the sense that they share no entries
nor even products of pairs
    of entries
    except $1$, i.e.,
    $$
    \{ ab: a, b \text{ entries of $\q$}  \}
    \ \cap\  
        \{ cd: c, d \text{ entries of $\q'$}\}  
= \{1\}
    \, .
    $$
    We formalize
    this notion in the next definition.
        
 \vspace{1ex}

\begin{definition}{\em
 We say that \(\q\) and \(\q'\) are 
 \textit{multiplicatively independent} 
 when $\q \ot \q$ and $\q' \ot \q'$ have $1$ as their only common entry.
    }
\end{definition}

\vspace{1ex}

We use the indexing convention of \cref{PairsOfOrderedPairs}
to describe blocks of $\q\ot \q'$:

\begin{lemma}\label{MultIndDecompSplit}
    The partition \(\p_{\q \ot \q'}\) refines the product partition \(\{B\times B': B\in \p_\q, B' \in \p_{\q'}\}\) of \([n] \times [n']\). If \(\q\) and \(\q'\) are multiplicatively independent, then
    these partitions 
    of $[n]\times [n']$ are equal.
   \end{lemma}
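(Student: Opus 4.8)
The plan is to work directly from the definitions of the blocks. Recall that two indices $(i,i')$ and $(j,j')$ lie in the same block of $\q\ot\q'$ exactly when the corresponding rows agree, i.e.\ $(\q\ot\q')_{(i,i'),(k,k')} = (\q\ot\q')_{(j,j'),(k,k')}$ for all $(k,k')$, which by \eqref{PairsOfOrderedPairs} says $q_{ik}\,q'_{i'k'} = q_{jk}\,q'_{j'k'}$ for all $k\in[n]$ and $k'\in[n']$. The first (easy) direction: if $i\sim j$ in $\p_\q$ and $i'\sim j'$ in $\p_{\q'}$, then $q_{ik}=q_{jk}$ and $q'_{i'k'}=q'_{j'k'}$ for all $k,k'$, so the displayed equality holds termwise and $(i,i')\sim(j,j')$ in $\p_{\q\ot\q'}$. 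Hence $\p_{\q\ot\q'}$ refines $\{B\times B'\}$.

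For the converse under multiplicative independence, suppose $(i,i')\sim(j,j')$ in $\p_{\q\ot\q'}$, so $q_{ik}\,q'_{i'k'} = q_{jk}\,q'_{j'k'}$ for all $k,k'$. The idea is to separate the variables: rewrite this as
\[
q_{ik}\,q_{jk}^{-1} \;=\; q'_{j'k'}\,q'_{i'k'}^{-1}\qquad\text{for all }k\in[n],\ k'\in[n'].
\]
The left-hand side depends only on $k$ and the right-hand side only on $k'$, so both sides equal a common constant $c\in\k^\times$ independent of $k$ and $k'$. Now $c = q_{ik}q_{jk}^{-1}$ is a product of two entries of $\q$ (namely $q_{ik}$ and $q_{jk}^{-1}=q_{kj}$), hence an entry of $\q\ot\q$; likewise $c = q'_{j'k'}q'_{i'k'}^{-1}$ is an entry of $\q'\ot\q'$. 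By multiplicative independence $c=1$. Therefore $q_{ik}=q_{jk}$ for all $k$, so $i\sim j$ in $\p_\q$, and $q'_{i'k'}=q'_{j'k'}$ for all $k'$, so $i'\sim j'$ in $\p_{\q'}$; thus $(i,i')$ and $(j,j')$ lie in the same block $B\times B'$ of the product partition. Combined with the first direction, the two partitions coincide.

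The main (and only real) obstacle is the variable-separation step: one must make sure the equation genuinely decouples into a function of $k$ times a function of $k'$ and conclude the common value is a constant, then correctly identify that constant as a product of two entries of $\q$ (and of $\q'$) so that the definition of multiplicative independence applies — the subtle point being that $q_{jk}^{-1}=q_{kj}$ is itself an entry of $\q$, so $c$ is legitimately an entry of $\q\ot\q$. Everything else is bookkeeping with the indexing convention of \eqref{PairsOfOrderedPairs}.
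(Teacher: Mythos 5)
Your proposal is correct and follows essentially the same route as the paper: both directions hinge on rewriting $q_{ik}q'_{i'k'}=q_{jk}q'_{j'k'}$ as an equality between an entry of $\q\ot\q$ (using $q_{jk}^{-1}=q_{kj}$) and an entry of $\q'\ot\q'$, and invoking multiplicative independence to conclude both equal $1$. The separation-of-variables observation that the common value is a constant independent of $k,k'$ is harmless but not needed — the paper simply applies the independence hypothesis pointwise for each fixed $(k,k')$.
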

\begin{proof}
    Suppose \(i\) and \(\ell\) are in the same block of $\q$ and \(i'\) and \(\ell'\) are in the same block of $\q'$.
    Then \(q_{im}\, q'_{i'm'} = q_{\ell m}\, q'_{\ell'm'}\) and thus
    \((\q \ot \q')_{(i,i')(m,m')}
    =  
    (\q \ot \q')_{(\ell,\ell')(m,m')}\) for all \((m,m')
    \in [n] \times [n']\),
    so that \((i,i')\) and \((\ell,\ell')\) are in the same block of 
    $\q \ot \q'$. 
    Refinement in the opposite direction uses multiplicative independence: 
        For all $(m,m')$, if
\(q_{im}\, q'_{i'm'} 
    = q_{\ell m}\, q'_{\ell' m'}\),  
   then  \(q_{im}\, q_{m\ell} = q_{\ell'm'}'\, q_{m'i'}'\), 
    and hence 
    \(q_{im}\, 
    q_{m\ell} = 1 = q_{\ell'm'}'\, q_{m'i'}'\)
 so
    \(q_{im} = q_{\ell m}\)  and \(q'_{i'm'} = q'_{\ell'm'}\).
\end{proof}

\vspace{1ex}


We now describe the
graded automorphism group arising from the tensor product of two quantum parameter matrices with multiplicatively independent entries.
\begin{thm}\label{TensorProductAutos}
    For \(\q\) and \(\q'\) 
    multiplicatively independent,
    $\Stab(\q\ot\q')
    \cong \Stab (\q) \times \Stab (\q')$
    and
    \[
    \Aut_{\gr}(S_{\q \ot \q'}(V \otimes V')) 
    \ \cong\  
    \bigg(\prod_{B \in \p_\q,\, B' \in \p_{\q'}
    \rule{0ex}{2ex}}
    \GL(V_B \otimes V'_{B'})\bigg)
    \,  \rtimes\, 
    (\Stab (\q) \times \Stab (\q')).
    \]
\end{thm}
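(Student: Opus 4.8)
The plan is to derive both assertions from \cref{BlockMatrixDecomposition} together with \cref{MultIndDecompSplit}. By \cref{MultIndDecompSplit}, multiplicative independence gives $\p_{\q\ot\q'} = \{B\times B' : B\in\p_\q,\ B'\in\p_{\q'}\}$, and for $B\times B'$ such a block we have $V_{B\times B'} = V_B\ot V'_{B'}$, so the kernel factor $\prod_{D\in\p_{\q\ot\q'}}\GL(V_D)$ of the semidirect product in \cref{BlockMatrixDecomposition} is exactly $\prod_{B\in\p_\q,\,B'\in\p_{\q'}}\GL(V_B\ot V'_{B'})$. Thus the entire content of the theorem reduces to the isomorphism $\Stab(\q\ot\q')\cong\Stab(\q)\times\Stab(\q')$, compatible with the conjugation action on that product of general linear groups.

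To analyze $\Stab(\q\ot\q')$, I would identify the index set for $\p_{\q\ot\q'}$ with the product of the index sets for $\p_\q$ and $\p_{\q'}$ via $B\times B' \leftrightarrow (B,B')$, so $\Stab(\q\ot\q')$ is a subgroup of $\mathfrak S_{|\p_\q|\cdot|\p_{\q'}|}$ acting on pairs. First I would construct the obvious inclusion: given $(\sigma,\sigma')\in\Stab(\q)\times\Stab(\q')$, the map $(B,B')\mapsto(\sigma(B),\sigma'(B'))$ preserves block sizes (since $|\sigma(B)\times\sigma'(B')| = |B|\,|B'|$) and satisfies $(\q\ot\q')_{(\sigma B\times\sigma'B'),(\sigma C\times\sigma'C')} = \q_{\sigma(B)\sigma(C)}\cdot\q'_{\sigma'(B')\sigma'(C')} = \q_{BC}\cdot\q'_{B'C'} = (\q\ot\q')_{(B\times B'),(C\times C')}$, using that $\q$ and $\q'$ are constant on the relevant minors and the defining relation \eqref{PairsOfOrderedPairs}; hence $(\sigma,\sigma')$ lies in $\Stab(\q\ot\q')$, and this map is clearly an injective homomorphism.

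The harder direction — and the main obstacle — is surjectivity: showing every $\rho\in\Stab(\q\ot\q')$ is of this product form, i.e.\ that $\rho$ respects the product structure on pairs of blocks. The key point is multiplicative independence. Fix a block $(B_0,B_0')$ and write $\rho(B_0,B_0') = (C_0,C_0')$. For the first coordinate: given another block $(B,B_0')$ sharing the second coordinate, I want $\rho(B,B_0')$ to share its second coordinate with $\rho(B_0,B_0')$, i.e.\ equal $(C,C_0')$ for some $C$. The constant value $(\q\ot\q')_{(B_0\times B_0'),(B\times B_0')} = \q_{B_0 B}\cdot\q'_{B_0' B_0'} = \q_{B_0 B}$ is an entry of $\q$; by $\Stab$-invariance this equals $\q_{C_0 C_1}\cdot\q'_{C_0' C_1'}$ where $(C_1,C_1') = \rho(B,B_0')$; multiplying by the reciprocal $\q_{C_1 C_0}\q'_{C_1'C_0'}$ (also an entry of $\q\ot\q'$, coming from $\rho$ of the transposed pair) and invoking the hypothesis that $\q\ot\q$ and $\q'\ot\q'$ meet only in $1$ forces $\q'_{C_0'C_1'}\q'_{C_1'C_0'}=1$, hence $\q'_{C_0'C_1'}=1$, hence $C_0'$ and $C_1'$ index identical rows of $\q'$ and so $C_1'=C_0'$. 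This shows $\rho$ induces a well-defined permutation $\sigma'$ of $\p_{\q'}$ on the second coordinate (independent of the first), and symmetrically a permutation $\sigma$ of $\p_\q$ on the first; a short check that $\sigma\in\Stab(\q)$, $\sigma'\in\Stab(\q')$, and $\rho=(\sigma,\sigma')$ then completes surjectivity. Finally I would note that under the identification $\iota$ of \cref{BlockAutoInjection}, the conjugation action of $\Stab(\q\ot\q')$ on $\prod\GL(V_B\ot V'_{B'})$ is the one induced coordinatewise by the actions of $\Stab(\q)$ and $\Stab(\q')$ — this is immediate from the fact that $\iota(\sigma,\sigma')$ sends $V_B\ot V'_{B'}$ to $V_{\sigma(B)}\ot V'_{\sigma'(B')}$ preserving basis order — so the isomorphism of \cref{BlockMatrixDecomposition} carries the stated product structure, yielding the displayed formula.
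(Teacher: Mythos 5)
Your overall route is the same as the paper's: reduce everything via \cref{MultIndDecompSplit} and \cref{BlockMatrixDecomposition} to the identity $\Stab(\q\ot\q')=\Stab(\q)\times\Stab(\q')$, prove the easy inclusion directly, and use multiplicative independence for the reverse inclusion. The reduction and the easy inclusion are fine, but the key step of the hard direction has a genuine gap. From $(\q\ot\q')_{(B_0,B_0'),(B,B_0')}=q_{b_0b}$ and $\Stab$-invariance you can indeed extract $q'_{c_0'c_1'}=1$ via multiplicative independence (though not the way you wrote it: multiplying an invariance equation by the invariance equation for the transposed pair gives the vacuous identity $1=1$, and your intermediate assertion ``$q'_{c_0'c_1'}q'_{c_1'c_0'}=1$'' holds for every quantum parameter matrix and implies nothing; the correct manipulation is $q_{b_0b}\,q_{c_1c_0}=q'_{c_0'c_1'}$, an entry of $\q\ot\q$ equal to an entry of $\q'\ot\q'$, so both sides are $1$). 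The fatal step is the next inference: $q'_{c_0'c_1'}=1$ does \emph{not} imply that $C_0'$ and $C_1'$ are the same block of $\p_{\q'}$. Blocks are equivalence classes of \emph{identical rows}, and two distinct blocks can have a $1$ between them: for $\q'=\begin{smallpmatrix}1&1&a\\1&1&b\\a^{-1}&b^{-1}&1\end{smallpmatrix}$ with $a\neq b$ nonzero and $\neq 1$, the blocks are $\{1\},\{2\},\{3\}$ yet $q'_{12}=1$. So your argument does not establish that $\rho$ acts independently of the first coordinate on the second.

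The repair is to compare whole rows rather than the single entry between the two blocks in question, which is what the paper does. For $\tau\in\Stab(\q\ot\q')$ with $\tau(B,B')=(D,D')$ and $\tau(B,C')=(F,F')$, test both against an arbitrary block $(L,L')=\tau^{-1}(M,M')$: each resulting equality of products of entries splits, by multiplicative independence, into separate equalities of $\q$-entries and of $\q'$-entries, giving $q_{dm}=q_{b\ell}=q_{fm}$ for \emph{every} $m$, whence rows $d$ and $f$ of $\q$ are identical and $D=F$ (and symmetrically in the other coordinate). Only this ``for all columns'' quantification lets you conclude equality of blocks. With that step replaced, the rest of your outline --- bijectivity of the induced $\sigma,\sigma'$, their membership in $\Stab(\q)$ and $\Stab(\q')$ (which also needs the block-size argument), and compatibility of the conjugation action --- goes through as you describe.
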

\begin{proof}
     By \cref{MultIndDecompSplit}, 
    \(\p_{\q \ot \q'}
    =
    \{B\times B': B\in \p_\q, B' \in \p_{\q'}\}\).
     The subspace of \(V \ot V'\) corresponding to the block \(B \times B'\) is $(V\ot V')_{B\times B'} = V_B\otimes V_{B'}$
     with our indexing convention.
     \cref{BlockMatrixDecomposition} then gives
     \[
     \Aut_{\gr}(S_{\q \ot \q'}(V \otimes V')) 
     \ \cong\ 
     \big(\prod_{B \in \p_\q,\, B' \in \p_{\q'}
    \rule{0ex}{2ex}}
     \GL(V_B \otimes V'_{B'})\big)
     \, \rtimes\, \Stab (\q \ot \q')\ .\]
     Recall
     $
     \Stab (\q \ot \q') 
     = 
     \{\tau \in \mathfrak{S}_{rr'}:
     \tau
     \text{ preserves block sizes
     for $\q\ot \q'$}
     \text{ and }
     \tau  (\q \ot \q') = \q \ot \q' \}$
     for $r=|\p_\q|$ and $r'=|\p_{\q'}|$.
     Here
   preserving
     block sizes means
      $|\tau(B,B')|=|B||B'|$
     for all blocks $B$ of $\q$
     and $B'$ of $\q'$,
     in which case
      $\tau(\q \ot \q')_{_{(B,B')(C,C')}}$ is  $(\q \ot \q')_{_{\tau(B,B')\, \tau(C,C')}}$.
      We identify \(\mathfrak{S}_r \times \mathfrak{S}_{r'}\)  with a subgroup of \(\mathfrak{S}_{rr'}\),
   and, for $\sigma \in \mathfrak{S}_r$
   and $\sigma'\in \mathfrak{S}_{r'}$
  preserving blocks sizes of $\q$ and $\q'$, respectively,
  we define the quantum parameter matrix
     \[
     (\sigma, \sigma')(\q \ot \q')\  =\  \sigma\q\ot \sigma'\q'
     \, . 
     \]  
   We argue that 
   $\Stab (\q) \times \Stab (\q')
     = \Stab (\q \ot \q') $
     under this identification.
     
     If \((\sigma, \sigma') \in \Stab (\q) \times \Stab (\q')\), then
     $
     (\sigma, \sigma') (\q \ot \q') = \sigma \q \ot \sigma' \q' = \q \ot \q'$
     so that $(\sigma, \sigma') \in \Stab (\q \ot \q')$.
    
     For the opposite inclusion, let \(\tau \in \Stab (\q \ot \q')\).
     Define \(\pi_1, \pi_2\) to be the projection functions onto the first and second coordinates, respectively.
 We claim that for any fixed $B'$ in $\p_{q'}$, 
 the function
 $\sigma: \p_{\q}\rightarrow \p_{\q}$
 defined by $B\mapsto \pi_1\, \tau(B,B')\)
 is independent of choice of $B'$.

 To verify the claim, we fix
 $B', C'$ in $\p_{\q'}$
 and $B$ in $\p_\q$, and show
 $\pi_1\tau(B,B')= \pi_1\tau(B, C')$.
 Write
     \(\tau(B,B') = (D,D')\)
     and
     \(\tau(B,C') = (F, F')\)
     for some blocks $D, D', F, F'$.
     Take any $(M,M')$ in $\p_q \times
     \p_q'$ and set
 $(L,L') = \tau^{-1}(M, M')$.
     Then 
     \[
     (\q \ot \q')_{_{(B,B')\, (L,L')}} 
     = (\q \ot \q')_{_{(D,D')\, (M,M')}} 
     \quad\text{ and }\quad 
    (\q \ot \q')_{_{(B,C')\, (L,L')}} 
    = 
     (\q \ot \q')_{_{(F,F')\, (M,M')}} 
 \, . \]
Since each minor matrix considered
 has all identical entries,
 we take a sample entry from each.
 Say 
 $b, b', c', d, d', f, f',
 \ell, \ell', m, m'$
 lie in 
 $B, B', C', D, D', F, F',
 L, L', M, M'$,
 respectively.
 The first equality in the display
 gives
 $q_{b\ell} \, q'_{b'\ell'}
 =q_{dm} \, q'_{d'm'}$
 which forces
 $q_{b\ell}=q_{dm}$
 by the multiplicative independence
 of $\q$ and $\q'$.
 A similar argument using the second
 equality 
 confirms that
 $q_{b\ell}=q_{fm}$,
 and hence
 $q_{dm}=q_{fm}$.
 As we may take $m$ to be arbitrary,
 $D=F$.
 Thus
 $\sigma = \pi_1\tau(-, B'):  \p_\q\rightarrow \p_\q$
 does not depend on choice of $B'$
 in $\p_{\q'}$.

 Likewise,  we may define
 a function
 $\sigma': \p_{\q'}\rightarrow \p_{\q'}$
 defined by $B'\mapsto \pi_2\, \tau(B,B')$
 for a fixed block $B$ of $\p_\q$
and show that $\sigma'$ does not depend on 
 choice of $B$.
 
 Then $\tau: \p_{\q\ot \q'}
 \rightarrow \p_{\q\ot {\q'}}$
 may be written as
 $\tau=(\sigma, \sigma')$.
 As $\tau$ is bijective,
both $\sigma$ and $\sigma'$ must be bijective.
 For example,
 if $\sigma(B_1)=\sigma(B_2)$,
 then for any block $C'$,
 $\tau(B_1, C')
 =(\sigma(B_1), \sigma'(C'))
 =(\sigma(B_2), \sigma'(C'))
 = \tau(B_2, C')$
 and $B_1=B_2$ as $\tau$ is one-to-one.
 Hence $\sigma$ and $\sigma'$ 
 are permutations of the blocks of $\q$
 and $\q'$, respectively.

We next argue that $\sigma$ lies in $\Stab(\q)$
and $\sigma'$ lies in $\Stab(\q')$.
First observe that both $\sigma$ and $\sigma'$
 preserve block size.
 Indeed,
 for all blocks $B$ in $\p_\q$ and 
 $B'$ in $\p_{\q'}$,
 $|B||B'|=|\tau(B,B')|
 =|\sigma(B)| |\sigma'(B')|$
 as $\tau\in \Stab(\q\ot \q')$.
 Thus if either $\sigma$ or $\sigma'$
 preserve block size, then so does the other.
 And if neither $\sigma$ nor $\sigma'$
 preserve block size, then 
 there are blocks $B$ and $B'$ sent by $\sigma$
 and $\sigma'$, respectively, 
 to larger blocks,
  implying that $|B||B'|=|\tau(B,B')|
=|\sigma(B)| |\sigma'(B')|> |B| |B'|$.
Since $\tau\in \Stab(\q\ot \q')$,
for any blocks $B, L$ of $\q$ and $B', L'$ of $\q'$,
$$
(\q \ot \q')_{_{(B,B')\, (L,L')}} 
=
(\q \ot \q')_{_{\tau(B,B')\, \tau(L,L')}} 
=(
\q \ot \q')_{_{(\sigma(B), \sigma'(B'))\, (\sigma(L), \sigma'(L'))
}}
$$
which implies that
$\q_{_{B L}} = \q_{_{\sigma(B)\, \sigma(L)}}$
and 
$\q'_{_{B'\, L'}} 
= \q'_{_{\sigma(B')\, \sigma(L')}}$
since $\q$ and $\q'$ are 
multiplicatively independent and $\sigma$ and $\sigma'$
preserve block sizes.
Thus $\tau=(\sigma,\sigma')$ lies in 
$\Stab (\q) \times \Stab (\q')$
and hence
   $\Stab (\q \ot \q') 
     \subset \Stab (\q) \times \Stab (\q')$.
\end{proof}

\vspace{1ex}

\begin{remark}{\em
\label{InfiniteFamilyHugeField}
For $|\k|$ sufficiently large
and
any  $n\times n$ quantum parameter matrix $\q$,
\cref{TensorProductAutos} implies that
there is an $m\times m$ quantum
parameter matrix $\q'$
for all positive multiples $m$ of $n$
with
stabilizing permutation group
$\Stab(\q') \cong \Stab(
\q)$.
Indeed,
we may find $\q''$ of appropriate size
with $\q$ and $\q''$ multiplicatively independent and $\Stab(\q'')$ trivial by \cref{CyclicAutoGroups}.
}
\end{remark}

\vspace{1ex}

The last theorem gives infinite families of graded automorphism groups that all share the same stabilizing permutation group.
\begin{cor}
\label{BlockPermutationTypeAppearsInfinitelyOften}
For positive integers \(n,m\), and \(n_1, \dots, n_r\), 
and a permutation group $G$,
there exists a quantum parameter matrix
\(\q \in \Mat(n, \k)\) 
with $\Stab(\q)=G$,
giving
$$
\Aut_{\gr}(S_\q(\k^n)) 
\ \cong\ 
\prod_i \GL(n_i, \k)\, \rtimes\, G
\, ,
$$
if and only if 
there exists a quantum parameter matrix
\(\q' \in \Mat(mn, \k)\) 
with $\Stab(\q')=G$,
giving
$$ 
\Aut_{\gr}(S_{\q'}(\k^{mn})) \ \cong \ 
\prod_i \GL(m\cdot n_i, \k)\, \rtimes\, G
\, .
$$
\end{cor}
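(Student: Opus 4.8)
The plan is to prove both directions of the biconditional using the Kronecker product machinery of \cref{TensorProductAutos} together with the cyclic-group construction of \cref{CyclicAutoGroups}. The key observation is that tensoring with a small matrix of trivial block permutation type preserves the block permutation type of $\Stab$ while scaling all block sizes by a fixed factor, which is exactly the relationship between $\prod_i \GL(n_i,\k)\rtimes G$ and $\prod_i \GL(m\cdot n_i,\k)\rtimes G$.

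First I would prove the forward direction. Assume $\q\in\Mat(n,\k)$ has $\Stab(\q)=G$ with the stated automorphism group. Using \cref{CyclicAutoGroups} (applied to the trivial group, which is cyclic), for $|\k|$ sufficiently large there is an $m\times m$ quantum parameter matrix $\q''$ with $\Stab(\q'')$ trivial and, crucially, all rows of $\q''$ distinct (so $\p_{\q''}$ has $m$ singleton blocks). I would then want $\q$ and $\q''$ to be multiplicatively independent; since the proof of \cref{CyclicAutoGroups} chooses the entries of $\q''$ with freedom in $\k$, for $|\k|$ large enough we may pick them avoiding the set $\{ab : a,b \text{ entries of }\q\ot\q\}$, as required by the definition. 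Now set $\q' = \q\ot\q''$, an $mn\times mn$ quantum parameter matrix. By \cref{TensorProductAutos}, $\Stab(\q') = \Stab(\q)\times\Stab(\q'') = G\times 1 \cong G$. For the automorphism group: by \cref{MultIndDecompSplit} the blocks of $\q'$ are exactly $B\times B''$ for $B\in\p_\q$, $B''\in\p_{\q''}$; since each $B''$ is a singleton, the block $B\times B''$ has size $|B|$, so the distinct block sizes of $\q'$ are the same $n_i$'s, each now occurring $m$ times as often. Wait — I should be more careful: \cref{TensorProductAutos} gives $\Aut_{\gr}(S_{\q'}(\k^{mn}))\cong\big(\prod_{B,B''}\GL(V_B\ot V''_{B''})\big)\rtimes(G\times 1)$, and $\dim(V_B\ot V''_{B''}) = |B|\cdot 1 = |B|$, so this is $\prod_i \GL(n_i,\k)^{m\cdot m_i}\rtimes G$ where $m_i$ is the original multiplicity. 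That is \emph{not} $\prod_i \GL(m\cdot n_i,\k)\rtimes G$. So I would instead tensor on the \emph{other} side: take $\q' = \q''\ot\q$ with $\q''$ of size $m$ having \emph{a single block} of size $m$ — but then $\Stab(\q'')$ is $\mathfrak{S}_m$, not trivial. The correct fix is to take $\q''$ to be the $m\times m$ matrix of all $1$'s, whose only block is all of $[m]$ and whose $\Stab$ is $\mathfrak{S}_m$; this does not work either. The honest approach: one cannot get $\GL(m n_i,\k)$ from a Kronecker product in general. Instead I would argue directly by modifying $\q$: replace each row/index by $m$ copies, i.e., form the $mn\times mn$ matrix $\q'$ indexed by pairs $(i,k)$, $i\in[n]$, $k\in[m]$, with $(\q')_{(i,k),(j,l)} = q_{ij}$. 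Then each block $B$ of $\q$ becomes a block $B\times[m]$ of size $m|B|$, so $\p_{\q'}$ has $r$ blocks of sizes $m\lambda_i$; and $\Stab(\q')\cong\Stab(\q)=G$ since permuting the $m$-fold copies respects the relation $(\q')_{(i,k),(j,l)}=q_{ij}$ exactly when the induced permutation of $[n]$ stabilizes $\q$ (the extra $[m]$-coordinate is free, but block-size preservation forces the $r$ blocks to be permuted as in $\mathfrak{S}_r$, recovering $G$). Then \cref{BlockMatrixDecomposition} yields $\Aut_{\gr}(S_{\q'}(\k^{mn}))\cong\prod_i\GL(m n_i,\k)\rtimes G$, as desired.

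For the converse I would run the same argument in reverse: given $\q'\in\Mat(mn,\k)$ with $\Stab(\q')=G$ and $\Aut_{\gr}(S_{\q'}(\k^{mn}))\cong\prod_i\GL(m n_i,\k)\rtimes G$, the blocks of $\q'$ have sizes that are multiples of $m$ — but this is not automatic from the hypotheses alone, so the clean statement is really ``there exists $\q'$'' in each direction, and the converse is proved by constructing $\q$ of size $n$ from the data $(n_1,\dots,n_r,G)$ directly: apply the forward direction's construction starting from any $\q_0\in\Mat(n,\k)$ with $\Stab(\q_0)=G$ and block sizes $n_i$, which exists precisely when the right-hand side of the biconditional does (by symmetry of the construction). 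In other words, both sides of the biconditional are equivalent to the single combinatorial condition ``$G$ arises as $\Stab$ of some quantum parameter matrix with block multiset $\{n_i\}$,'' and the $m$-fold blow-up operation $\q\mapsto\q'$ and its inverse (collapsing each block of $\q'$ to $1/m$ its size, possible exactly when all blocks have size divisible by... ) — here I would instead simply note that the forward construction, applied symmetrically, gives both implications: from $\q$ build $\q'$; from the existence of \emph{any} $\q'$ with the stated data, one re-derives the existence of $\q$ by the same blow-up applied to a matrix realizing $G$ with block sizes $n_i$, whose existence is what the LHS asserts.

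The main obstacle I anticipate is pinning down the computation $\Stab(\q')=G$ for the blow-up $\q'$: one must check that although each block $B\times[m]$ could in principle be permuted internally in many ways, the block-size-preservation requirement in the definition of $\Stab$ together with the constancy of $\q'$ on the $[m]$-coordinate collapses $\Stab(\q')$ onto the induced action on the $r$ blocks, which is $G$. This is the same phenomenon exploited in \cref{InfiniteFamilyHugeField} and in the proof of \cref{TensorProductAutos} (where the block-size argument forces $\tau=(\sigma,\sigma')$), so I would model the argument on those; the remaining steps are routine bookkeeping with \cref{BlockMatrixDecomposition}.
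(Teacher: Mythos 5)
Your forward direction ends in the right place, but by an unnecessary detour caused by a misreading of the definition of $\Stab$. You reject the all-ones matrix $\mathds{1}_m$ on the grounds that its stabilizer is $\mathfrak{S}_m$; in fact $\Stab$ is a subgroup of $\mathfrak{S}_r$ acting on the \emph{blocks} of the parameter matrix, and $\mathds{1}_m$ has a single block $[m]$, so $\Stab(\mathds{1}_m)\subset\mathfrak{S}_1$ is trivial. Moreover $\mathds{1}_m$ is automatically multiplicatively independent from any $\q$ (the entries of $\mathds{1}_m\ot\mathds{1}_m$ are all $1$), so \cref{TensorProductAutos} and \cref{MultIndDecompSplit} apply directly and give $\Stab(\q\ot\mathds{1}_m)\cong G$ with blocks $B\times[m]$ of size $m|B|$. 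This is exactly the paper's proof. Your fallback ``blow-up'' matrix $(\q')_{(i,k),(j,l)}=q_{ij}$ is literally $\q\ot\mathds{1}_m$, and your direct verification of $\Stab(\q')=G$ is fine once one remembers that $\Stab$ permutes blocks rather than indices (so the ``internal permutations of $B\times[m]$'' you worry about never enter). So the forward implication is correct in substance.

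The converse, however, has a genuine gap: your argument is circular. You need to produce an $n\times n$ matrix $\q$ from the \emph{given} $\q'\in\Mat(mn,\k)$, but instead you assert that ``any $\q_0$ with $\Stab(\q_0)=G$ and block sizes $n_i$ exists precisely when the right-hand side does, by symmetry of the construction'' --- the existence of such a $\q_0$ is exactly the left-hand side of the biconditional, i.e., the thing to be proved. Reducing both sides to ``$G$ arises as $\Stab$ of some matrix with block multiset $\{n_i\}$'' restates the corollary rather than proving it. The missing (short) step is the down-construction: by hypothesis $\q'$ has $r$ blocks of sizes $mn_1,\dots,mn_r$ with $\Stab(\q')=G$, and every minor $\q'_{_{BC}}$ between blocks is constant; define $\q\in\Mat(n,\k)$ with blocks of sizes $n_1,\dots,n_r$ whose $(i,j)$ minor is constant equal to the entry of $\q'_{_{B_iB_j}}$. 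Then $\q'=\q\ot\mathds{1}_m$ up to reindexing, distinct blocks of $\q'$ give distinct rows of $\q$ so the block sizes are exactly the $n_i$, the minor data (hence $\Stab$) is unchanged, and \cref{BlockMatrixDecomposition} yields $\Aut_{\gr}(S_\q(\k^n))\cong\prod_i\GL(n_i,\k)\rtimes G$. This is what the paper does; without it your converse does not stand.
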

\begin{proof}
    Suppose \(\q\) is an $n\times n$
    quantum parameter matrix 
   with
    \(G = \Stab (\q)\) 
    giving the semidirect
    product indicated by
    \cref{BlockAutosSubsetMaximal}.
Let \(\q' = \q \ot \mathds{1}_m\) 
    for $\mathds{1}_m$ the $m \times m$ matrix whose every entry is $1$. Since $\mathds{1}_m$ and \(\q\) are multiplicatively independent, and \(\Stab (\mathds{1}_m)\) is trivial, \cref{TensorProductAutos} 
    implies that \(\Stab (\q) \cong \Stab(\q')\). By \cref{MultIndDecompSplit}, since \(\p_{\mathds{1}_m}\) has only one block, 
    \(\p_{\q'}\) 
    is in bijection with \(\p_\q\), 
    and each block of \(\p_{\q'}\) has \(m\) times as many elements as the corresponding block of \(\p_\q\). 
    Thus, 
    \[\Aut_{\gr}(S_{\q'}(\k^{nm})) \ \cong \ 
\prod_i \GL(m\cdot n_i, \k)\, \rtimes\, G
\, .\]

    Conversely, fix a quantum 
    parameter matrix \(\q' \in \Mat(mn, \k)\)
    with the above graded automorphism group.
    Then every block of \(\p_{\q'}\) has size a multiple of \(m\). As all entries in \(\q'_{_{BC}}\) are equal for all blocks \(B,C \in \p_{\q'}\), 
     there is a \(n \times n\) quantum parameter matrix \(\q\)
     with \(\q' = \q \ot \mathds{1}_m\)
     and a similar argument verifies that \(\Aut_{\gr}(S_\q(\k^n))\) is as claimed.
\end{proof}

\vspace{1ex}

\begin{example}{\em 
Observe $\Aut_{\gr}(S_\q(\k^3)) \cong (\k^\times)^3 \rtimes \mathfrak{S}_3\, $ 
   and
   $\Aut_{\gr}(S_{\q'}(\k^9)) \cong (\GL(3, \k))^3 \rtimes \mathfrak{S}_3$
for
$$\q =         \begin{smallpmatrix}
        1 & -1 & -1 \\
        -1 & 1 & -1 \\
        -1 & -1 & 1
    \end{smallpmatrix}
\qquad\text{ and }\qquad
\q'= \q \ \ot 
            \begin{smallpmatrix}
        1 & 1 & 1 \\
        1 & 1 & 1 \\
        1 & 1 & 1
    \end{smallpmatrix}.
    $$
    }
   \end{example}

\vspace{1ex}

\section{Classification of Graded Automorphism Groups of Low Dimension}
\label{ClassificationSection}
We classify graded automorphism groups of 
quantum affine spaces
up to $\dim V\leq 7$.
Since two skew polynomial rings
are isomorphic as algebras
if and only if their
quantum parameters differ
by a permutation applied to indices, see Gaddis \cite[Theorem~7.4]{gaddis2014isomorphisms},
we classify these groups up to a permutation of the basis elements of \(V\).
Determining the classification reduces to computing the possibilities for the stabilizing permutation group 
$\Stab(\q)$ of an $n \times n$ quantum parameter matrix $\q$ up to conjugation in $\mathfrak{S}_n$.
Not every subgroup of the symmetric group can appear as a stabilizing permutation group. For example, of the $96$ conjugacy classes of
subgroups of $\mathfrak{S}_7$, only 
$53$ appear.
This may be explained by the
following corollary
of
\cref{BlockAutosSubsetMaximal}.

\begin{cor} 
\label{BlockAutosOrbitProperty}
Consider a skew polynomial algebra
 \(S_\q(V)\) with $r$ the number of 
 distinct rows of $\q$.
 The group of graded automorphism is
    $
    \Aut_{\gr}(S_\q(V)) 
    \cong 
    \big(\prod_{i=1}^r
    \GL(n_i, \k)\big)\rtimes G
    $ for
    a group \(G \subset \mathfrak{S}_r\) 
       with  the property
    that $H \subset G$
whenever $[r]^2/H = [r]^2/G$
 for all
$    H\subset \mathfrak{S}_r$.
Here, $G=\Stab(\q)$.
\end{cor}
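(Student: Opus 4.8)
The plan is to read this off directly from \cref{BlockAutosSubsetMaximal} together with \cref{lattice}, the only work being bookkeeping. First I would fix an enumeration $B_1, \dots, B_r$ of the blocks of $\q$ and set $n_i = |B_i|$, identifying $\GL(V_{B_i})$ with $\GL(n_i, \k)$. Collecting the factors of a common size, the product $\prod_{i=1}^{r}\GL(n_i,\k)$ is exactly the group $\prod_{i=1}^{s}\GL(\lambda_i,\k)^{m_i}$ appearing in \cref{BlockAutosSubsetMaximal}, where $\lambda_1, \dots, \lambda_s$ are the distinct block sizes and $m_i$ the number of blocks of size $\lambda_i$. Hence \cref{BlockAutosSubsetMaximal} (equivalently \cref{BlockMatrixDecomposition}) already gives $\Aut_{\gr}(S_\q(V)) \cong \big(\prod_{i=1}^{r}\GL(n_i,\k)\big)\rtimes G$ with $G = \Stab(\q)$.

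It remains to establish the orbit maximality property. Let $\mathcal{P}$ be the partition of $[r]^2$ defined in \cref{BlockAutosSubsetMaximal} by $(i,j)\sim(\ell,m)$ when $\q_{_{B_i\, B_j}} = \q_{_{B_\ell\, B_m}}$, so that, by that theorem, $G = \Stab(\q)$ is the maximal subgroup of $\mathfrak{S}_r$ whose orbit space $[r]^2/G$ refines $\mathcal{P}$. By \cref{lattice}, the collection
\[
\mathcal{L} \ =\ \{\, H \subset \mathfrak{S}_r \ :\ [r]^2/H \text{ refines } \mathcal{P}\,\}
\]
is a lattice under subgroup inclusion; being a nonempty finite lattice, it has a greatest element, namely the join of all of its members, and this greatest element is $G$ since $G$ is maximal in $\mathcal{L}$. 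Now suppose $H \subset \mathfrak{S}_r$ satisfies $[r]^2/H = [r]^2/G$. Then $[r]^2/H$ refines $\mathcal{P}$ because $[r]^2/G$ does, so $H \in \mathcal{L}$, whence $H \subset G$. This is the asserted property.

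I do not expect a genuine obstacle here: the content is entirely in \cref{BlockAutosSubsetMaximal} and \cref{lattice}, and the proof amounts to noticing that ``maximal'' in \cref{BlockAutosSubsetMaximal} means ``greatest element'' once \cref{lattice} supplies the lattice structure. The only points requiring a little care are matching the relabeled product $\prod_{i=1}^{r}\GL(n_i,\k)$ with the grouped product $\prod_{i=1}^{s}\GL(\lambda_i,\k)^{m_i}$, and observing that the equality of orbit spaces $[r]^2/H = [r]^2/G$ is precisely what places $H$ inside the lattice $\mathcal{L}$.
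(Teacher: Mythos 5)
Your proposal is correct and is exactly the intended derivation: the paper states this as an immediate consequence of \cref{BlockAutosSubsetMaximal} (with \cref{lattice} supplying the join-closure that upgrades ``maximal'' to ``greatest''), and offers no separate proof. Your bookkeeping identifying $\prod_{i=1}^{r}\GL(n_i,\k)$ with $\prod_{i=1}^{s}\GL(\lambda_i,\k)^{m_i}$ and the observation that $[r]^2/H=[r]^2/G$ places $H$ in the lattice are precisely the right details to check.
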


We used 
the software GAP \cite{GAP} 
to give conjugacy
classes of subgroups of $\mathfrak{S}_n$
and a depth-first search algorithm in Python \cite{Python} to compute the graded automorphism groups of all possible $\q$ matrices, using \cref{BlockAutosOrbitProperty}
to rule out groups
when helpful. The core linear algebra operations were performed with NumPy \cite{Numpy}.

\vspace{1ex}

\subsection*{Maximality condition
is not sufficient}

Not every group $G$ with the
maximality property
of
\cref{BlockAutosOrbitProperty}
is the stabilizing permutation group
of some $\Aut_{\gr}(S_\q(V))$,
as we see in the next example.

\vspace{1ex}

\begin{example}{\em 
    Suppose
     \(G = \langle (1\, 2)(3\, 4), (1\, 3)(2\, 4) \rangle \subset \mathfrak{S}_r\)
     for $r\geq 4$.
    Then \(G\) satisfies the maximality condition
     of \cref{BlockAutosOrbitProperty}.
     We argue $G \neq \Stab(\q)$ for every
     quantum parameter matrix $\q$.
     Fix $\q$
     with blocks $B_1, \ldots, B_r$
     and $G = \Stab(\q)$. 
     Define a new $4 \times 4$ matrix $\q'$ with $q'_{ij}$ the entry in $\q_{_{B_iB_j}}$ for $i, j\leq 4$. 
    Note that
     $\q_{_{B_iB_m}} = \q_{_{B_jB_m}}$ for $i,j \leq 4$ and $m >4$
     as $G$ is transitive on $\{1,2,3,4\}$ but fixes $m$.
     This implies that
     $\q'$ is a quantum parameter matrix with distinct rows
      and $\Stab(\q')=G$.
 Thus
$$\q' = 
    \begin{smallpmatrix}
        1 & a & b & c \\
        a & 1 & c & b \\
        b & c & 1 & a \\
        c & b & a & 1
    \end{smallpmatrix}
\qquad\text{ 
    for some }
    a,b,c \in \{1, -1\}
    \, .
    $$
     Note \(c \neq b\) 
    else $(1\, 2)$ would lie in $G$. 
    If $(b,c)=(1,-1)$,
    then
 $a\neq 1$, 
    else $(2\, 3)$ would lie in $G$, and 
 $a\neq -1$,
 else rows $1$ and $3$ would be identical.
Similarly impossible is the case $(b,c)=(-1,1)$.
    Hence there is no quantum parameter matrix \(\q\)
    with $G= \Stab(\q)$ and
    \(\Aut_{\gr}(S_\q(V)) \cong \prod_{i=1}^r\GL(n_i, \k) \rtimes G\).
    }
\end{example}

\vspace{1ex}

\subsection*{Tracking minimal field size}\label{sec: MinimalFieldSize}
Whether or not a group
may arise as $\Aut_{\gr}(S_\q(\k^n))\) for some quantum parameter matrix \(\q\) depends on the cardinality of the field \(\k\).
See \cref{CyclicAutoGroups} for example:
The field must be large enough 
for \(\q\) to contain enough distinct entries. For example, if \(\k = \mathbb{F}_2\), then
the only
quantum parameter matrix is 
the trivial matrix
\(\q = \mathds{1}_n\)
with
\(S_\q(V) \cong \k[v_1, \dots,  v_n]\) and \(\Aut_{\gr}(S_\q(\k^n)) \cong \GL(n,\k)\). A sharp lower bound on \(|\k|\) is calculated for each group. For $n > 7$, if $G$ indeed does appear as $\Stab(\q)$ for some $\q$,
the number of orbits of $G$ on $[n]^2$ is an upper bound on the minimum
cardinality of $\k^\times$ required, 
but we are unaware of any formulas for
these bounds.

\vspace{1ex}

\begin{example}\label{SillyMissingGroup}{\em
    Let \(n=4\) and \(H = (\k^\times)^4
    \rtimes\langle (1\,2\,3) \rangle\). 
    We argue that $4$ is the minimal cardinality of $\k$ 
    to guarantee
    $H$ arises as the graded automorphism
    group of some skew polynomial algebra. 
        We may rule out
    \(|\k| = 2\), 
    and
    there are four possibilities for $\q$
    when
    \(|\k| = 3\),
    namely
    \[\begin{smallpmatrix}
        1 & 1 & 1 & 1 \\
        1 & 1 & 1 & 1 \\
        1 & 1 & 1 & 1 \\
        1 & 1 & 1 & 1
    \end{smallpmatrix},
    \quad \begin{smallpmatrix}
        1 & 1 & 1 & -1 \\
        1 & 1 & 1 & -1 \\
        1 & 1 & 1 & -1 \\
        -1 & -1 & -1 & 1
    \end{smallpmatrix},
    \quad \begin{smallpmatrix}
        1 & -1 & -1 & 1 \\
        -1 & 1 & -1 & 1 \\
        -1 & -1 & 1 & 1 \\
        1 & 1 & 1 & 1
    \end{smallpmatrix},
    \quad 
    \text{ and }\quad
    \begin{smallpmatrix}
        1 & -1 & -1 & -1 \\
        -1 & 1 & -1 & -1 \\
        -1 & -1 & 1 & -1 \\
        -1 & -1 & -1 & 1
    \end{smallpmatrix}
    \]
    which give rise to respective
    graded automorphism groups
    \[\GL(4,\k),
    \quad \GL(3,\k)\times \k^\times,
    \quad (\k^\times)^4\, \rtimes\, \langle (1\,2), (1\,2\,3) \rangle,
    \quad\text{ and }\quad
    (\k^\times)^4\, \rtimes\, \mathfrak{S}_4\, .\]
    Of these, none are 
    isomorphic to \(H\),
    so the minimal cardinality of $\k$
    is at least $4$. 
     A simple computation verifies that \(\Aut_{\gr}(S_\q(V)) \cong H\)
     for
    \(\k = \{0,1,a,a^{-1}\} \cong \mathbb{F}_4\) and
    $ \q = \begin{smallpmatrix}
        1 & a & a^{-1} & a \\
        a^{-1} & 1 & a & a \\
        a & a^{-1} & 1 & a \\
        a^{-1} & a^{-1} & a^{-1} & 1
    \end{smallpmatrix}\, .$
    }
\end{example}

\vspace{1ex}

\subsection*{Nonmonomial groups
classified using monomial groups
of lower dimension}
By \cref{BlockAutosSubsetMaximal}, we
may assign to each quantum parameter matrix
\(\q\) a partition
$\lambda$
of $n=\dim V$
with
\(\Aut_{\gr}(S_\q(V)) \cong \big(\prod_i \GL(\lambda_i, \k)^{m_i}\big) \rtimes G\)
for $G=\Stab(\q)$, the stabilizing permutation group,
and $\lambda =(\lambda_1^{m_1} \ \ldots\ \lambda_s^{m_s})$.
Here, 
$\lambda_1, \ldots, \lambda_s$ are the distinct 
parts of $\lambda$ with $m_i$ the multiplicity
of $\lambda_i$ so that the length of $\lambda$ 
is $r=\sum_{i=1}^s m_i$.

Every {\em monomial} graded automorphism group 
has the form \((\k^\times)^n \rtimes G\)
corresponding to the partition 
\(\lambda = (1\ \ldots\ 1)\). 
For {\em nonmonomial} graded automorphism groups
corresponding to a fixed $\lambda$,
the stabilizing permutation groups \(G\) 
all arise from
{monomial}
graded automorphism groups
in lower dimension $r = \ell(\lambda)$,
as explained with the next proposition.

\begin{prop}\label{EarlierTable}
Suppose $|\k|$ is sufficiently large
and let 
$\lambda=(\lambda_1^{m_1} \ \ldots\
\lambda_s^{m_s})$ 
be a partition of $n$ of length $r$.
Let $G$ be a subgroup of $\mathfrak{S}_r$.
The following are equivalent:
\begin{itemize}
    \item 
There is a
$n\times n$ quantum parameter matrix 
$\q$ with block sizes corresponding to $\lambda$
with $\Stab(\q)=G$.
\item
There is an $r\times r$
quantum parameter matrix
$\q'$ with \(\Stab(\q') = G\) 
a subgroup of $\prod_{i=1}^s \mathfrak{S}_{m_i}$
in $\mathfrak{S}_r$
(up to conjugation)
with $\Aut_{\gr}(S_{\q'}(V))$ monomial.
\end{itemize}
\end{prop}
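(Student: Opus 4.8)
The plan is to treat the two implications separately: the reverse one by \emph{inflating} $\q'$ into a matrix with the prescribed block sizes, and the forward one by \emph{deflating} $\q$ to size $r$ and then multiplicatively twisting its off-diagonal entries so that the block sizes are re-encoded. For ``if'', suppose $\q'$ is $r\times r$ with distinct rows and $\Stab(\q')=G$, where (after conjugating in $\mathfrak{S}_r$) $G\subseteq\prod_{i=1}^s\mathfrak{S}_{m_i}$. I would fix a ``type'' function $t\colon[r]\to\{1,\dots,s\}$ with $|t^{-1}(i)|=m_i$ whose level sets are preserved by $G$, and form the $n\times n$ matrix $\q$ by replacing, in $\q'$, each diagonal entry $1$ by the all-ones block $\mathds{1}_{\lambda_{t(j)}}$ and each off-diagonal entry $q'_{jk}$ by the constant $\lambda_{t(j)}\times\lambda_{t(k)}$ block with every entry $q'_{jk}$. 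Then $\q$ is a quantum parameter matrix, and distinctness of the rows of $\q'$ is exactly what guarantees that $\p_\q$ consists of $r$ blocks $B_1,\dots,B_r$ with $|B_j|=\lambda_{t(j)}$, so the block sizes realize $\lambda$. By \cref{IsoConditionOnIndices} one gets $\Stab(\q)=\{\sigma\in\Stab(\q'):t\circ\sigma=t\}=\Stab(\q')\cap\prod_i\mathfrak{S}_{m_i}=G$, since the off-diagonal minors of $\q$ merely reproduce the entries of $\q'$ while the diagonal minors contribute only the constraint that $\sigma$ preserve block size; \cref{BlockAutosSubsetMaximal} then yields the stated decomposition. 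This direction needs no hypothesis on $|\k|$.

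\textbf{The ``only if'' direction.} Suppose now $\q$ is $n\times n$ with blocks $B_1,\dots,B_r$ realizing $\lambda$ and $\Stab(\q)=G$; by \cref{BlockAutosSubsetMaximal}, $G$ is conjugate into $\prod_i\mathfrak{S}_{m_i}$. The first step is the observation that \emph{every diagonal minor $\q_{B_jB_j}$ is the all-ones matrix}: for $i,i'\in B_j$ the $i$-th and $i'$-th rows of $\q$ coincide, so evaluating at column $i$ gives $q_{i'i}=q_{ii}=1$, whence $\q_{B_jB_j}=\mathds{1}_{|B_j|}$. Thus the only information in $\q$ beyond the constants $c_{jk}:=(\q_{B_jB_k})$ for $j\neq k$ is the block-size function $t$. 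Let $\q^{\mathrm{nv}}\in\Mat(r,\k)$ have diagonal $1$ and off-diagonal $c_{jk}$; then $\Stab(\q^{\mathrm{nv}})\cap\prod_i\mathfrak{S}_{m_i}=G$, but $\Stab(\q^{\mathrm{nv}})$ itself can be strictly larger (a permutation swapping blocks of different sizes may still fix all the $c_{jk}$). I would fix this by choosing ``generic'' scalars $d_1,\dots,d_s\in\k^\times$ (possible for $|\k|$ large) and setting
\[
q'_{jj}=1,\qquad q'_{jk}=c_{jk}\,d_{t(j)}\,d_{t(k)}^{-1}\ \ (j\neq k).
\]
Then $q'_{jk}q'_{kj}=c_{jk}c_{kj}=1$, so $\q'$ is a quantum parameter matrix; its rows are distinct (two rows of $\q'$ can coincide only if the corresponding rows of $\q^{\mathrm{nv}}$ do, i.e.\ only if two blocks of $\q$ coincide), so $\Aut_{\gr}(S_{\q'}(V))$ is monomial by \cref{WhenMonomialGroupsArise}.

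\textbf{Verification that $\Stab(\q')=G$.} One inclusion is immediate: elements of $G$ fix $t$ and preserve the $c_{jk}$, hence lie in $\Stab(\q')$. For the reverse, any $\sigma\in\Stab(\q')$ satisfies $c_{jk}/c_{\sigma j\,\sigma k}=d_{t(\sigma j)}d_{t(k)}/(d_{t(j)}d_{t(\sigma k)})$ for all $j\neq k$. Dividing two such identities for indices $j,j'$ in a common $t$-class and a fixed $k$ yields a predetermined element equal to $d_{t(\sigma j)}/d_{t(\sigma j')}$, which by genericity of the $d_i$ forces $t(\sigma j)=t(\sigma j')$; so $\sigma$ induces a map $\bar\sigma$ of $\{1,\dots,s\}$, and the resulting identities $d_{\bar\sigma(i)}/d_i=\mu\cdot(\text{a product of }c\text{'s})$ are, for generic $d_i$, solvable only when every monomial involved is trivial, i.e.\ $\bar\sigma=\mathrm{id}$. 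Then $\sigma$ preserves block sizes, all the displayed ratios equal $1$, so $c_{jk}=c_{\sigma j\,\sigma k}$ and $\sigma\in\Stab(\q)=G$.

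\textbf{Main obstacle.} The delicate point is this last verification: one must see that the multiplicative twist is both necessary and sufficient. Its necessity is real, since $\Stab(\q^{\mathrm{nv}})$ can strictly exceed $G$, and since any $r\times r$ quantum parameter matrix is forced to have constant diagonal, the twist is the only device left for reinjecting block-size data into the off-diagonal entries. Making ``generic'' precise means enumerating the finitely many monomial identities among $d_1,\dots,d_s$ (each with a right-hand side that is a fixed product of the $c_{jk}$ of $\q$) that a stray $\sigma$ would require, and invoking $|\k|$ large to avoid them all while keeping each $d_i\neq0$. One should also dispatch the degenerate cases directly: $s=1$, where $\q=\q^{\mathrm{nv}}\otimes\mathds{1}_{\lambda_1}$ and \cref{TensorProductAutos} already gives $\Stab(\q)\cong\Stab(\q^{\mathrm{nv}})$, and $r\le2$.
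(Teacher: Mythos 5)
Your proposal is correct, and its skeleton (inflate $\q'$ to get $\q$; deflate $\q$ and then modify the off-diagonal entries to re-encode the block sizes) is the same as the paper's. The ``if'' direction coincides with the paper's: repeat rows/columns of $\q'$ according to $\lambda$, note that distinctness of the rows of $\q'$ makes the inflated groups exactly the blocks of $\p_\q$, and conclude $\Stab(\q)=\Stab(\q')\cap\prod_i\mathfrak{S}_{m_i}=G$. Where you genuinely diverge is the key step of the ``only if'' direction. The paper handles the problem you correctly identify (that the naive deflation $\q^{\mathrm{nv}}$ can have a strictly larger stabilizer) by \emph{re-alphabetizing}: it replaces the entries of each off-diagonal minor $\q'_{E_iE_j}$ (indexed by pairs of size-classes) with fresh scalars, preserving within-minor equalities and making different minors share no entries, so that any stabilizing permutation is forced to preserve the size-class partition. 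You instead keep the original entries and apply the multiplicative twist $q'_{jk}=c_{jk}\,d_{t(j)}d_{t(k)}^{-1}$ with generic $d_i$; the cocycle form guarantees $q'_{jk}q'_{kj}=1$, the inclusion $G\subseteq\Stab(\q')$ is automatic because the twist is constant on size-classes, and genericity rules out the finitely many monomial identities a size-class-mixing $\sigma$ would impose. Both devices need $|\k|$ large and both work; yours has the mild virtue of being an explicit formula whose quantum-parameter property is verified by a one-line computation, while the paper's is combinatorially cleaner (no genericity bookkeeping beyond ``enough distinct scalars''). Your remaining debts --- making ``generic'' precise by listing the finitely many excluded monomial equations, checking that the twisted rows stay distinct, and the degenerate cases $s=1$ and $r\le 2$ --- are all routine and are honestly flagged, so I see no gap.
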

\begin{proof}
 We use \cref{BlockAutosSubsetMaximal}.
Take $\q$ as described 
and label its blocks  $B_1, \ldots, B_r$ so that 
each $\q_{_{B_i B_j}}$
has all entries the same.
Let $\q'$ be the $r\times r$ quantum parameter matrix
whose $ij$-th entry is the entry
in $\q_{_{B_i B_j}}$.
In other words, $\q'$ is 
the matrix obtained by removing duplicate rows and columns of $\q$ and reindexing.
Let \(\mathcal{P}\) be the partition of \([r]\) 
with two indices \(i,j\) in the same block 
exactly when \(|B_i| = |B_j|\). 
We write \(\mathcal{P} = \{E_1, E_2, \dots E_s\}\) and replace entries in $\q'$
so that entries that were equal remain equal and entries that were distinct remain distinct in 
each off-diagonal minor $\q'_{E_i E_j}$
and so that 
each of these
minors 
has distinct entries from the rest of \(\q'\).
Then \(\sigma \in \mathfrak{S}_r\)
lies in $\Stab(\q')$
exactly when \(\sigma(i)\) and \(i\) are in the same block of \(\mathcal{P}\) and
$\q'_{ij}=\q'_{\sigma(i) \sigma(j)}$
for all $i,j$. This happens
 exactly when \(|B_i| = |\sigma(B_i)|\) and
\(\q_{_{B_i B_j}} = \q_{_{\sigma(B_i)\sigma(B_j)}}\) 
for all $i,j$. Therefore \(\Stab(\q') = \Stab(\q) = G\).
Finally, since \(\q'\) has distinct rows, the graded automorphism group corresponding to \(\q'\) is monomial.

Now assume $\q'$ as described in the statement is given.
Then $\q'$ must have distinct
rows as $\Aut_{\gr}(S_{\q'}(V))$
is monomial.
We construct $\q$ from $\q'$ by
repeating $\lambda_i$ times 
the $i$-th row of $\q'$
to create a matrix of size $n\times r$
and then adding the columns necessary
to obtain a
$n\times n$ quantum parameter matrix $\q$
as claimed
with $\Stab(\q)=\Stab(\q')$.
\end{proof}

\vspace{1ex}

\begin{example}{\em 
    Say $\q$
is an $8\times 8$ quantum parameter
matrix corresponding
to the partition
\(\lambda = (1,1,2,2,2) = (1^2\ 2^3) \)
so $\ell(\lambda)=5$.  We
find in the $\dim V = 5$ table
the  rows with  $\Stab \q'$ a subgroup
of $\mathfrak{S}_2\times \mathfrak{S}_3$
identified with a subgroup of 
$\mathfrak{S}_5$
up to conjugation in $\mathfrak{S}_5$,
as
there must be two blocks of the smallest size and
three of the next smallest size.
Thus, up to reindexing,
$\Stab(\q)=G$ and
$$
\Aut_{\gr}(S_\q(\k^8)) 
\ \cong\  
\big(
(\k^\times)^2 \times \GL(2, \k)^3\big) \,\rtimes\, G\, 
$$
for $G\subset \mathfrak{S}_2
\times \mathfrak{S}_3$
one of $8$ groups, namely,
$1$,
$1\times\langle (1\, 2\,  3)\rangle$,
$1\times \mathfrak{S}_3$,
$\mathfrak{S}_2\times 1$,
$\mathfrak{S}_2\times\langle (1\, 2)\rangle$,
$\mathfrak{S}_2\times\langle (1\, 2\,  3)\rangle$,
$\langle (1\, 2) \times (1\, 2) \rangle$,
and $\mathfrak{S}_2\times \mathfrak{S}_3$
for $1$ the identity group.
All of these $8$ groups arise for some $\q$.
}
\end{example}

\vspace{1ex}

\subsection*{Listing the monomial
groups, counting the nonmonomial groups}
For $\dim V \leq 5$, we give
each graded automorphism group
\(\Aut_{\gr}(S_\q(V)) \)
explicitly.
For $\dim V = 6$ and $\dim V=7$,
we give the monomial 
graded automorphism groups  
$(\k^{\times})^m \rtimes G$
by listing the
stabilizing permutation groups $G=\Stab(\q)$
that arise explicitly.
We count the
nonmonomial graded automorphism groups
in the classification using \cref{EarlierTable},
which 
explains how to write them out
explicitly 
using earlier tables
in order to complete the classification.
We omit explicit tables 
for these nonmonomial groups 
for brevity.

\vspace{1ex}

\subsection*{The explicit classification}
Below, we classify the matrix groups that arise as graded automorphism groups for skew polynomial algebras with $\dim V \leq 7$.
We give the classification up to a permutation of the basis elements, i.e., up to a permutation of indices 
of the quantum parameter matrices.
In the tables, 
\((\k^\times)^m \times \GL(\ell, \k)\)  indicates a group of block diagonal matrices, which sometimes appears in a semidirect
product with a permutation group acting by permuting the blocks.
There are exactly 

%
\begin{itemize}
    \item 
\(3\) groups for $\dim V =2$,
\item
 \(6\) groups for $\dim V = 3$,
 \item
 $15$ groups for $\dim V = 4$,
 \item
 $25$ groups for $\dim V = 5$,
 \item 
 $65$ groups for $\dim V = 6$, and
 \item
 $105$ groups for $\dim V = 7$
 \end{itemize}
that arise as 
\(\Aut_{\gr}(S_\q(V))\) 
for some quantum parameter matrix $\q$.
For $\dim V = 3$, this recovers
\cite[Theorem~11.1]{LevandovskyyShepler}.
Aside from a handful of permutation groups of small order, the classification  for $\dim V$ equal to $4$, $5$, $6$, and $7$ was determined by exhaustive computation, taking only a brief amount of time with current 
computer capabilities.

In each table below, 
we list 
the groups in the classification
for a fixed dimension of $V$
and
indicate a sample  $\q$
giving that group as 
\(\Aut_{\gr}(S_\q(V))\) 
along with the
stabilizing permutation group
$G=\Stab(\q)$ as in 
\cref{BlockAutosSubsetMaximal}.
We also indicate
the minimum cardinality of \(\k\) 
required.
For each matrix $\q$ listed, we assume $a,b,c,d,e,f$
are pairwise distinct scalars in $\k^*/ \{\pm 1\}$.

We also indicate in each table the orbits 
of $G$ on \([n]^2\) for $n=\dim V$
using the fact that $G$ permutes blocks of $\q$ of the same size:
For each possible block size,
we reindex so the blocks of that size are 
$B_1,\ldots, B_m$
and give an $m\times m$ matrix
indicating the orbits of $G$ on $[m]\times [m]$;
the entries 
range from $1$ to the 
number of orbits
with $ij$-th and $k \ell $-th entries equal 
whenever $(i,j)$ and $(k, \ell)$ lie in the same orbit.

\begin{footnotesize}
\begingroup
        \renewcommand\arraystretch{2.7}
        \centering
        \begin{longtable}{|ccccc|}
        \caption{Graded Automorphism Groups for $\dim V=2$} \\
        \hline
        \(\Aut_{\gr}(S_\q(V))\) & \(\q\) & 
        $\Stab \q$
         & Orbits & Min \(|\k|\) \\
        \hline
        \((\k^{\times})^2\) & \(\begin{smallpmatrix}
            1 & a \\
            a^{-1} & 1
        \end{smallpmatrix}\) & \(\langle e \rangle\) & \(\begin{bsmallmatrix}
            1 & 2 \\
            3 & 4
        \end{bsmallmatrix}\) & 4 \\


        \((\k^{\times})^2 \rtimes \langle (1\,2)\rangle\) & \(\begin{smallpmatrix}
            1 & -1 \\
            -1 & 1
        \end{smallpmatrix}\) & \(\langle (1\,2) \rangle\) & \(\begin{bsmallmatrix}
            1 & 2 \\
            2 & 1
        \end{bsmallmatrix}\) & 3 \\


        \(\GL(2, \k)\) & \(\begin{smallpmatrix}
            1 & 1 \\
            1 & 1
        \end{smallpmatrix}\) & \(\langle e \rangle\) & \(\begin{bsmallmatrix}
            1
        \end{bsmallmatrix}\) & 2
        
        \\
        \hline
        \end{longtable}
    \endgroup
\end{footnotesize}

\begin{footnotesize}
\begingroup
        \renewcommand\arraystretch{2.7}
        \centering
        \begin{longtable}{|ccccc|}
        \caption{Graded Automorphism Groups for $\dim V=3$} \\
        \hline
        \(\Aut_{\gr}(S_\q(V))\) & \(\q\) & \(\Stab \q \) & Orbits & Min \(|\k|\)\\
        \hline
        \((\k^{\times})^3\) & \(\begin{smallpmatrix}
            1 & a & b \\
            a^{-1} & 1 & c \\
            b^{-1} & c^{-1} & 1
        \end{smallpmatrix}\) & \(\langle e \rangle\) & \(\begin{bsmallmatrix}
            1 & 2 & 3 \\
            4 & 5 & 6 \\
            7 & 8 & 9
        \end{bsmallmatrix}\) & 4 \\
        
        
        \((\k^{\times})^3 \rtimes \langle (1\,2) \rangle\) & \(\begin{smallpmatrix}
            1 & -1 & a \\
            -1 & 1 & a \\
            a^{-1} & a^{-1} & 1
        \end{smallpmatrix}\) & \(\langle( 1\, 2)\rangle\) & \(\begin{bsmallmatrix}
            1 & 2 & 3 \\
            2 & 1 & 3 \\
            4 & 4 & 1
        \end{bsmallmatrix}\) & 3 \\

        
        \((\k^{\times})^3 \rtimes \langle (1\,2\,3) \rangle\) & \(\begin{smallpmatrix}
            1 & a & a^{-1} \\
            a^{-1} & 1 & a \\
            a & a^{-1} & 1
        \end{smallpmatrix}\) & \(\langle(1\,2\,3)\rangle\) & \(\begin{bsmallmatrix}
            1 & 2 & 3 \\
            3 & 1 & 2 \\
            2 & 3 & 1
        \end{bsmallmatrix}\) & 4 \\
        
        
        \((\k^{\times})^3 \rtimes \mathfrak{S}_3\) & \(\begin{smallpmatrix}
            1 & -1 & -1 \\
            -1 & 1 & -1 \\
            -1 & -1 & 1
        \end{smallpmatrix}\) & \(\mathfrak{S}_3\) & \(\begin{bsmallmatrix}
            1 & 2 & 2 \\
            2 & 1 & 2 \\
            2 & 2 & 1
        \end{bsmallmatrix}\) & 3 \\
        

        \(\k^{\times} \times \GL(2,\k)\) & \(\begin{smallpmatrix}
            1 & a & a  \\
            a^{-1} & 1 & 1 \\
            a^{-1} & 1 & 1
        \end{smallpmatrix}\) & 
        \(\langle e \rangle\) & \(\begin{bsmallmatrix}
            1
        \end{bsmallmatrix}\), \(\begin{bsmallmatrix}
            1
        \end{bsmallmatrix}\) & 3 \\
        
        
        \(\GL(3,\k)\) & \(\begin{smallpmatrix}
            1 & 1 & 1 \\
            1 & 1 & 1 \\
            1 & 1 & 1
        \end{smallpmatrix}\) & \(\langle e \rangle\) & \(\begin{bsmallmatrix}
            1
        \end{bsmallmatrix}\) & 2
        
        \\
        \hline
        \end{longtable}
    \endgroup
\end{footnotesize}

\newpage

For \(\dim V = 4\),
the classification comprises $15$ graded automorphism groups.

\begin{footnotesize}
    \begingroup
        \renewcommand\arraystretch{2.7}
        \centering
        \begin{longtable}{|ccccc|}
        \caption{Graded Automorphism Groups for $\dim V = 4$} \\
        \hline
        \(\Aut_{\gr}(S_\q(V))\) & \(\q\) & \(\Stab \q\) & Orbits & Min \(|\k|\)\\
        \hline
        \((\k^{\times})^4\) & \(\begin{smallpmatrix}
            1 & a & b & c \\
            a^{-1} & 1 & d & e \\
            b^{-1} & d^{-1} & 1 & f \\
            c^{-1} & e^{-1} & f^{-1} & 1
        \end{smallpmatrix}\) & \(\langle e \rangle\) & \(\begin{bsmallmatrix}
            1 & 2 & 3 & 4 \\
            5 & 6 & 7 & 8 \\
            9 & 10 & 11 & 12 \\
            13 & 14 & 15 & 16
        \end{bsmallmatrix}\) & 5 \\
        
        
        \((\k^{\times})^4 \rtimes \langle (1\,2) \rangle\) & \(\begin{smallpmatrix}
            1 & -1 & b & c \\
            -1 & 1 & b & c \\
            b^{-1} & b^{-1} & 1 & d \\
            c^{-1} & c^{-1} & d^{-1} & 1
        \end{smallpmatrix}\) & \(\langle (1\,2) \rangle\) & \(\begin{bsmallmatrix}
            1 & 2 & 3 & 4 \\
            2 & 1 & 3 & 4 \\
            5 & 5 & 6 & 7 \\
            8 & 8 & 9 & 10
        \end{bsmallmatrix}\) & 3 \\
        \((\k^{\times})^4 \rtimes \langle (1\,2\,3) \rangle\) & \(\begin{smallpmatrix}
            1 & a & a^{-1} & b \\
            a^{-1} & 1 & a & b \\
            a & a^{-1} & 1 & b \\
            b^{-1} & b^{-1} & b^{-1} & 1
        \end{smallpmatrix}\) & \(\langle (1\,2\,3) \rangle\) & \(\begin{bsmallmatrix}
            1 & 2 & 3 & 4 \\
            3 & 1 & 2 & 4 \\
            2 & 3 & 1 & 4 \\
            5 & 5 & 5 & 6
        \end{bsmallmatrix}\) & 4 \\
        
        
        \((\k^{\times})^4 \rtimes \langle (1\,2\,3), (1\,2) \rangle\) & \(\begin{smallpmatrix}
            1 & -1 & -1 & b \\
            -1 & 1 & -1 & b \\
            -1 & -1 & 1 & b \\
            b^{-1} & b^{-1} & b^{-1} & 1
        \end{smallpmatrix}\) & \(\langle (1\,2\,3), (1\,2) \rangle\) & \(\begin{bsmallmatrix}
            1 & 2 & 2 & 3 \\
            2 & 1 & 2 & 3 \\
            2 & 2 & 1 & 3 \\
            4 & 4 & 4 & 5
        \end{bsmallmatrix}\) & 3 \\
        \((\k^{\times})^4 \rtimes \langle (1\,2)(3\,4)\rangle\) & \(\begin{smallpmatrix}
            1 & \pm1 & a & b \\
            \pm1 & 1 & b & a \\
            a^{-1} & b^{-1} & 1 & \pm1 \\
            b^{-1} & a^{-1} & \pm1 & 1
        \end{smallpmatrix}\) & \(\langle (1\,2)(3\,4) \rangle\) & \(\begin{bsmallmatrix}
            1 & 2 & 3 & 4 \\
            2 & 1 & 4 & 3 \\
            5 & 6 & 7 & 8 \\
            6 & 5 & 8 & 7
        \end{bsmallmatrix}\) & 3 \\
        
        
        \((\k^{\times})^4 \rtimes \langle (1\,2),(3\,4)\rangle\) & \(\begin{smallpmatrix}
            1 & -1 & a & a \\
            -1 & 1 & a & a \\
            a^{-1} & a^{-1} & 1 & -1 \\
            a^{-1} & a^{-1} & -1 & 1
        \end{smallpmatrix}\) & \(\langle (1\,2),(3\,4) \rangle\) & \(\begin{bsmallmatrix}
            1 & 2 & 3 & 3 \\
            2 & 1 & 3 & 3 \\
            4 & 4 & 5 & 6 \\
            4 & 4 & 6 & 5
        \end{bsmallmatrix}\) & 5 \\
        
        
        \((\k^{\times})^4 \rtimes \langle (1\,2\,3\,4)\rangle\) & \(\begin{smallpmatrix}
            1 & a & \pm1 & a^{-1} \\
            a^{-1} & 1 & a & \pm1 \\
            \pm1 & a^{-1} & 1 & a \\
            a & \pm1 & a^{-1} & 1
        \end{smallpmatrix}\) & \(\langle (1\,2\,3\,4) \rangle\) & \(\begin{bsmallmatrix}
            1 & 2 & 3 & 4 \\
            4 & 1 & 2 & 3 \\
            3 & 4 & 1 & 2 \\
            2 & 3 & 4 & 1
        \end{bsmallmatrix}\) & 5\\
        
        
        \((\k^{\times})^4 \rtimes \langle (1\,2\,3\,4), (1\,3)\rangle\) & \(\begin{smallpmatrix}
            1 & 1 & -1 & 1 \\
            1 & 1 & 1 & -1 \\
            -1 & 1 & 1 & 1 \\
            1 & -1 & 1 & 1
        \end{smallpmatrix}\) & \(\langle (1\,2\,3\,4), (1\,3) \rangle\) & \(\begin{bsmallmatrix}
            1 & 2 & 3 & 2 \\
            2 & 1 & 2 & 3 \\
            3 & 2 & 1 & 2 \\
            2 & 3 & 2 & 1
        \end{bsmallmatrix}\) & 3\\
        
        
        \((\k^{\times})^4 \rtimes \mathfrak{S}_4\) & \(\begin{smallpmatrix}
            1 & -1 & -1 & -1 \\
            -1 & 1 & -1 & -1 \\
            -1 & -1 & 1 & -1 \\
            -1 & -1 & -1 & 1
        \end{smallpmatrix}\) & \( \mathfrak{S}_4\) & \(\begin{bsmallmatrix}
            1 & 2 & 2 & 2 \\
            2 & 1 & 2 & 2 \\
            2 & 2 & 1 & 2 \\
            2 & 2 & 2 & 1
        \end{bsmallmatrix}\) & 3\\
        
        
        \((\k^\times)^2 \times \GL(2, \k)\) & \(\begin{smallpmatrix}
            1 & a & b & b \\
            a^{-1} & 1 & b & b \\
            b^{-1} & b^{-1} & 1 & 1 \\
            b^{-1} & b^{-1} & 1 & 1
        \end{smallpmatrix}\) & \(\langle e \rangle\) & \(\begin{bsmallmatrix}
            1 & 2  \\
            3 & 4  \\
        \end{bsmallmatrix}, \begin{bsmallmatrix}
            1
        \end{bsmallmatrix}\) & 4\\
        
        
        \(((\k^\times)^2 \times \GL(2, \k)) \rtimes \langle (1\,2)\rangle\) & \(\begin{smallpmatrix}
            1 & -1 & a & a \\
            -1 & 1 & a & a \\
            a^{-1} & a^{-1} & 1 & 1 \\
            a^{-1} & a^{-1} & 1 & 1
        \end{smallpmatrix}\) & \(\langle (1\,2) \rangle\) & \(\begin{bsmallmatrix}
            1 & 2  \\
            2 & 1  \\
        \end{bsmallmatrix}, \begin{bsmallmatrix}
            1
        \end{bsmallmatrix}\) & 3\\


        \((\GL(2, \k))^2\) & \(\begin{smallpmatrix}
            1 & 1 & a & a \\
            1 & 1 & a & a \\
            a^{-1} & a^{-1} & 1 & 1 \\
            a^{-1} & a^{-1} & 1 & 1
        \end{smallpmatrix}\) & \(\langle e \rangle\) & \(\begin{bsmallmatrix}
            1 & 2  \\
            3 & 4  \\
        \end{bsmallmatrix}\) & 4 \\
        
        
        \(\left(\GL(2, \k)\right)^2 \rtimes (1\,2)\) & \(\begin{smallpmatrix}
            1 & 1 & -1 & -1 \\
            1 & 1 & -1 & -1 \\
            -1 & -1 & 1 & 1 \\
            -1 & -1 & 1 & 1
        \end{smallpmatrix}\) & \(\langle (1\,2) \rangle\) & \(\begin{bsmallmatrix}
            1 & 2  \\
            2 & 1  \\
        \end{bsmallmatrix}\) & 3 \\
        
        
        \(\k^\times \times \GL(3, \k)\) & \(\begin{smallpmatrix}
            1 & a & a & a \\
            a^{-1} & 1 & 1 & 1 \\
            a^{-1} & 1 & 1 & 1 \\
            a^{-1} & 1 & 1 & 1
        \end{smallpmatrix}\) & \(\langle e \rangle\) & \(\begin{bsmallmatrix}
            1
        \end{bsmallmatrix}, \begin{bsmallmatrix}
                1
            \end{bsmallmatrix}\) & 3\\


        \(\GL(4, \k)\) & \(\begin{smallpmatrix}
            1 & 1 & 1 & 1 \\
            1 & 1 & 1 & 1 \\
            1 & 1 & 1 & 1 \\
            1 & 1 & 1 & 1
        \end{smallpmatrix}\)\rule[-15pt]{0pt}{0pt} & \(\langle e \rangle\) & \(\begin{bsmallmatrix}
            1
        \end{bsmallmatrix}\) & 2 
        \\
        \hline
        \end{longtable}
    \endgroup
\end{footnotesize}

For $\dim V= 5$,
we list monomial 
graded automorphism groups in the first table and nonmonomial groups in the second table. There are total of 25 graded automorphism groups.

   \newpage

   \begin{footnotesize}
    \begingroup
        \renewcommand\arraystretch{2.9}
        \centering
        \setlength{\tabcolsep}{1pt}
        \begin{longtable}{|ccccc|}
            \caption{Monomial 
            Graded Automorphism Groups for $\dim V = 5$} \\
            \hline
                        \(\Aut_{\gr}(S_\q(V))\) & \(\q\) & \(\Stab \q\) & Orbits & Min \(|\k|\) \hspace{3pt} \\
            \hline
            \rule{0ex}{6.5ex}
            \((\k^{\times})^5\) & \ \(\begin{smallpmatrix}
                1 & a & b & c & d\\
                a^{-1} & 1 & e & f & g\\
                b^{-1} & e^{-1} & 1 & h & i \\
                c^{-1} & f^{-1} & h^{-1} & 1 & j \\
                d^{-1} & g^{-1} & i^{-1} & j^{-1} & 1
            \end{smallpmatrix}\) \ \ 
            & \(\langle e \rangle\) & \(\begin{bsmallmatrix}
                1 & 2 & 3 & 4 & 5\\
                6 & 7 & 8 & 9 & 10\\
                11 & 12 & 13 & 14 & 15 \\
                16 & 17 & 18 & 19 & 20 \\
                21 & 22 & 23 & 24 & 25
            \end{bsmallmatrix}\) & 5\\
            
            
            \((\k^{\times})^5 \rtimes \langle (1\,2) \rangle\) & \(\begin{smallpmatrix}
                1 & -1 & a & b & c\\
                -1 & 1 & a & b & c\\
                a^{-1} & a^{-1} & 1 & d & e \\
                b^{-1} & b^{-1} & d^{-1} & 1 & f \\
                c^{-1} & c^{-1} & e^{-1} & f^{-1} & 1
            \end{smallpmatrix}\) & \(\langle (1\,2) \rangle\) & \(\begin{bsmallmatrix}
                1 & 2 & 3 & 4 & 5\\
                2 & 1 & 3 & 4 & 5\\
                6 & 6 & 7 & 8 & 9 \\
                10 & 10 & 11 & 12 & 13 \\
                14 & 14 & 15 & 16 & 17
            \end{bsmallmatrix}\) & 3\\
            
            
            \((\k^{\times})^5 \rtimes \langle (1\,2\,3) \rangle\) & \(\begin{smallpmatrix}
                1 & a & a^{-1} & c & d\\
                a^{-1} & 1 & a & c & d\\
                a & a^{-1} & 1 & c & d \\
                c^{-1} & c^{-1} & c^{-1} & 1 & f \\
                d^{-1} & d^{-1} & d^{-1} & f^{-1} & 1
            \end{smallpmatrix}\) & \(\langle (1\,2\,3) \rangle\) & \(\begin{bsmallmatrix}
                1 & 2 & 3 & 4 & 5\\
                3 & 1 & 2 & 4 & 5\\
                2 & 3 & 1 & 4 & 5 \\
                6 & 6 & 6 & 7 & 8 \\
                9 & 9 & 9 & 10 & 11
            \end{bsmallmatrix}\) & 4\\
            
            
            \((\k^{\times})^5 \rtimes \langle (1\,2\,3\,4) \rangle\) & \(\begin{smallpmatrix}
                1 & a & -1 & a^{-1} & d\\
                a^{-1} & 1 & a & -1 & d\\
                -1 & a^{-1} & 1 & a & d \\
                a & -1 & a^{-1} & 1 & d \\
                d^{-1} & d^{-1} & d^{-1} & d^{-1} & 1
            \end{smallpmatrix}\) & \(\langle (1\,2\,3\,4) \rangle\) & \(\begin{bsmallmatrix}
                1 & 2 & 3 & 4 & 5\\
                4 & 1 & 2 & 3 & 5\\
                3 & 4 & 1 & 2 & 5 \\
                2 & 3 & 4 & 1 & 5 \\
                6 & 6 & 6 & 6 & 7
            \end{bsmallmatrix}\) & 5\\
            
            
            \((\k^{\times})^5 \rtimes \langle (1\,2)(3\,4) \rangle\) & \(\begin{smallpmatrix}
                1 & \pm 1 & a & b & c\\
                \pm 1 & 1 & b & a & c\\
                a^{-1} & b^{-1} & 1 & \pm 1 & d \\
                b^{-1} & a^{-1} & \pm 1 & 1 & d \\
                c^{-1} & c^{-1} & d^{-1} & d^{-1} & 1
            \end{smallpmatrix}\) & \(\langle (1\,2)(3\,4) \rangle\) & \(\begin{bsmallmatrix}
                1 & 2 & 3 & 4 & 5\\
                2 & 1 & 4 & 3 & 5\\
                6 & 7 & 8 & 9 & 10 \\
                7 & 6 & 9 & 8 & 10 \\
                11 & 11 & 12 & 12 & 13
            \end{bsmallmatrix}\) & 3 \\
            
            
            \((\k^{\times})^5 \rtimes \langle (1\, 2),(3\, 4) \rangle\) & \(\begin{smallpmatrix}
                1 & \pm 1 & a & a & b\\
                \pm 1 & 1 & a & a & b\\
                a^{-1} & a^{-1} & 1 & \pm 1 & d \\
                a^{-1} & a^{-1} & \pm 1 & 1 & d \\
                b^{-1} & b^{-1} & d^{-1} & d^{-1} & 1
            \end{smallpmatrix}\) & \(\langle (1\,2),(3\,4) \rangle\) & \(\begin{bsmallmatrix}
                1 & 2 & 3 & 3 & 4\\
                2 & 1 & 3 & 3 & 4\\
                5 & 5 & 6 & 7 & 8 \\
                5 & 5 & 7 & 6 & 8 \\
                9 & 9 & 10 & 10 & 11
            \end{bsmallmatrix}\) & 3 \\
            
            
            \((\k^{\times})^5 \rtimes \langle (1\,2),(1\,2\,3) \rangle\) & \(\begin{smallpmatrix}
                1 & - 1 & -1 & a & b\\
                - 1 & 1 & -1 & a & b\\
                -1 & -1 & 1 & a & b \\
                a^{-1} & a^{-1} & a^{-1} & 1 & c \\
                b^{-1} & b^{-1} & b^{-1} & c^{-1} & 1
            \end{smallpmatrix}\) & \(\langle (1\,2),(1\,2\,3) \rangle\) & \(\begin{bsmallmatrix}
                1 & 2 & 2 & 3 & 4\\
                2 & 1 & 2 & 3 & 4\\
                2 & 2 & 1 & 3 & 4 \\
                5 & 5 & 5 & 6 & 7 \\
                8 & 8 & 8 & 9 & 10
            \end{bsmallmatrix}\) & 3\\
            
            
            \((\k^{\times})^5 \rtimes \langle (1\,2\,3\,4), (1\,3) \rangle\) & \(\begin{smallpmatrix}
                1 & 1 & - 1 & 1 & a\\
                1 & 1 & 1 & - 1 & a\\
                -1 & 1 & 1 & 1 & a \\
                1 & -1 & 1 & 1 & a \\
                a^{-1} & a^{-1} & a^{-1} & a^{-1} & 1
            \end{smallpmatrix}\) & \(\langle (1\,2\,3\,4), (1\,3) \rangle\) & \(\begin{bsmallmatrix}
                1 & 2 & 3 & 2 & 4\\
                2 & 1 & 2 & 3 & 4\\
                3 & 2 & 1 & 2 & 4 \\
                2 & 3 & 2 & 1 & 4 \\
                5 & 5 & 5 & 5 & 6
            \end{bsmallmatrix}\) & 3 \\
            
            
            \((\k^{\times})^5 \rtimes \langle (1\,2\,3\,4), (1\,2) \rangle\) & \(\begin{smallpmatrix}
                1 & -1 & -1 & -1 & a\\
                -1 & 1 & -1 & -1 & a\\
                -1 & -1 & 1 & -1 & a \\
                -1 & -1 & -1 & 1 & a \\
                a^{-1} & a^{-1} & a^{-1} & a^{-1} & 1
            \end{smallpmatrix}\) & \(\langle (1\,2\,3\,4), (1\,2) \rangle\) & \(\begin{bsmallmatrix}
                1 & 2 & 2 & 2 & 3\\
                2 & 1 & 2 & 2 & 3\\
                2 & 2 & 1 & 2 & 3 \\
                2 & 2 & 2 & 1 & 3 \\
                4 & 4 & 4 & 4 & 5
            \end{bsmallmatrix}\) & 3 \\
            
            
            \((\k^{\times})^5 \rtimes \langle (1\,2),(3\,4\,5) \rangle\) & \(\begin{smallpmatrix}
                1 & \pm 1 & a & a & a\\
                \pm 1 & 1 & a & a & a\\
                a^{-1} & a^{-1} & 1 & b & b^{-1} \\
                a^{-1} & a^{-1} & b^{-1} & 1 & b \\
                a^{-1} & a^{-1} & b & b^{-1} & 1
            \end{smallpmatrix}\) & \(\langle (1\,2),(3\,4\,5) \rangle\) & \(\begin{bsmallmatrix}
                1 & 2 & 3 & 3 & 3\\
                2 & 1 & 3 & 3 & 3\\
                4 & 4 & 5 & 6 & 7 \\
                4 & 4 & 7 & 5 & 6 \\
                4 & 4 & 6 & 7 & 5
            \end{bsmallmatrix}\) & 5 \\
            
            
            \((\k^{\times})^5 \rtimes \langle (1\,2),(1\,2\,3)(4\,5) \rangle\) & \(\begin{smallpmatrix}
                1 & - 1 & -1 & a & a\\
                - 1 & 1 & -1 & a & a\\
                -1 & -1 & 1 & a & a \\
                a^{-1} & a^{-1} & a^{-1} & 1 & -1 \\
                a^{-1} & a^{-1} & a^{-1} & -1 & 1
            \end{smallpmatrix}\) & \(\langle (1\,2),(1\,2\,3)(4\,5) \rangle\) & \(\begin{bsmallmatrix}
                1 & 2 & 2 & 3 & 3\\
                2 & 1 & 2 & 3 & 3\\
                2 & 2 & 1 & 3 & 3 \\
                4 & 4 & 4 & 5 & 6 \\
                4 & 4 & 4 & 6 & 5
            \end{bsmallmatrix}\) & 3\\
            
            
            \((\k^{\times})^5 \rtimes \langle (1\,2\,3\,4\,5) \rangle\) & \(\begin{smallpmatrix}
                1 & a & b & b^{-1} & a^{-1}\\
                a^{-1} & 1 & a & b & b^{-1}\\
                b^{-1} & a^{-1} & 1 & a & b \\
                b & b^{-1} & a^{-1} & 1 & a \\
                a & b & b^{-1} & a^{-1} & 1
            \end{smallpmatrix}\) & \(\langle (1\,2\,3\,4\,5) \rangle\) & \(\begin{bsmallmatrix}
                1 & 2 & 3 & 4 & 5\\
                5 & 1 & 2 & 3 & 4\\
                4 & 5 & 1 & 2 & 3 \\
                3 & 4 & 5 & 1 & 2 \\
                2 & 3 & 4 & 5 & 1
            \end{bsmallmatrix}\) & 4\\
            
            
            \((\k^{\times})^5 \rtimes \langle (1\,2\,3\,4\,5), (1\,5)(2\,4) \rangle\) & \(\begin{smallpmatrix}
                1 & -1 & 1 & 1 & -1\\
                -1 & 1 & -1 & 1 & 1\\
                1 & -1 & 1 & -1 & 1 \\
                1 & 1 & -1 & 1 & -1 \\
                -1 & 1 & 1 & -1 & 1
            \end{smallpmatrix}\) & \(\langle (1\,2\,3\,4\,5), (1\,5)(2\,4) \rangle\) & \(\begin{bsmallmatrix}
                1 & 2 & 3 & 3 & 2\\
                2 & 1 & 2 & 3 & 3\\
                3 & 2 & 1 & 2 & 3 \\
                3 & 3 & 2 & 1 & 2 \\
                2 & 3 & 3 & 2 & 1
            \end{bsmallmatrix}\) & 3\\
            
            
            \((\k^{\times})^5 \rtimes \mathfrak{S}_5\) & \(\begin{smallpmatrix}
                1 & -1 & -1 & -1 & -1\\
                -1 & 1 & -1 & -1 & -1\\
                -1 & -1 & 1 & -1 & -1 \\
                -1 & -1 & -1 & 1 & -1 \\
                -1 & -1 & -1 & -1 & 1
            \end{smallpmatrix}\) & \(
            \mathfrak{S}_5\) & \(\begin{bsmallmatrix}
                1 & 2 & 2 & 2 & 2\\
                2 & 1 & 2 & 2 & 2\\
                2 & 2 & 1 & 2 & 2 \\
                2 & 2 & 2 & 1 & 2 \\
                2 & 2 & 2 & 2 & 1
            \end{bsmallmatrix}\)\rule[-18pt]{0pt}{0pt} & 3 \\
            \hline
        \end{longtable}
    \endgroup
   \end{footnotesize}
   
    \newpage
    
    \begin{footnotesize}
    \begingroup
        \renewcommand\arraystretch{2.9}
        \centering
        \setlength{\tabcolsep}{2pt}
        \begin{longtable}{|ccccc|}
            \caption{Nonmonomial 
            Graded Automorphism Groups for
            $\dim V = 5$ } \\
            \hline
            \(\Aut_{\gr}(S_\q(V))\) & \(\q\) & \(\Stab \q\) & Orbits & Min \(|\k|\) \hspace{3pt} \\
            \hline
            \rule{0ex}{6.5ex}
            \((\k^{\times})^3 \times \GL(2,\k)\) & 
            \ \ \(\begin{smallpmatrix}
                1 & a & b & c & c \\
                a^{-1} & 1 & d & e & e \\
                b^{-1} & d^{-1} & 1 & f & f \\
                c^{-1} & e^{-1} & f^{-1} & 1 & 1 \\
                c^{-1} & e^{-1} & f^{-1} & 1 & 1
            \end{smallpmatrix}\) \ \ 
            & \(\langle e \rangle\) & \(\begin{bsmallmatrix}
                1 & 2 & 3 \\
                4 & 5 & 6 \\
                7 & 8 & 9 \\
            \end{bsmallmatrix}, \begin{bsmallmatrix}
                1
            \end{bsmallmatrix}\) & 4 \\
            
            
            \(( (\k^{\times})^3 \times \GL(2, \k)) \rtimes \langle (1\,2) \rangle\) &
            \ \(\begin{smallpmatrix}
                1 & -1 & a & b & b \\
                -1 & 1 & a & b & b \\
                a^{-1} & a^{-1} & 1 & c & c \\
                b^{-1} & b^{-1} & c^{-1} & 1 & 1 \\
                b^{-1} & b^{-1} & c^{-1} & 1 & 1
            \end{smallpmatrix}\) \ \ 
            & \(\langle (1\,2) \rangle\) & \(\begin{bsmallmatrix}
                1 & 2 & 3 \\
                2 & 1 & 3 \\
                4 & 4 & 5
            \end{bsmallmatrix}, \begin{bsmallmatrix}
                1
            \end{bsmallmatrix}\) & 5 \\
            
            
            \(( (\k^{\times})^3 \times \GL(2, \k)) \rtimes \langle (1\,2\,3) \rangle\) & \(\begin{smallpmatrix}
                1 & a & a^{-1} & b & b \\
                a^{-1} & 1 & a & b & b \\
                a & a^{-1} & 1 & b & b \\
                b^{-1} & b^{-1} & b^{-1} & 1 & 1 \\
                b^{-1} & b^{-1} & b^{-1} & 1 & 1
            \end{smallpmatrix}\) & \(\langle (1\,2\,3) \rangle\) & \(\begin{bsmallmatrix}
                1 & 2 & 3 \\
                3 & 1 & 2 \\
                2 & 3 & 1 \\
            \end{bsmallmatrix}, \begin{bsmallmatrix}
                1
            \end{bsmallmatrix}\) & 4 \\
            
            
            \(((\k^{\times})^3 \times \GL(2, \k)) \rtimes \mathfrak{S}_3\) & \(\begin{smallpmatrix}
                1 & -1 & -1 & a & a \\
                -1 & 1 & -1 & a & a \\
                -1 & -1 & 1 & a & a \\
                a^{-1} & a^{-1} & a^{-1} & 1 & 1 \\
                a^{-1} & a^{-1} & a^{-1} & 1 & 1
            \end{smallpmatrix}\) & \( \mathfrak{S}_3\) & \(\begin{bsmallmatrix}
                1 & 2 & 2 \\
                2 & 1 & 2 \\
                2 & 2 & 1 \\
            \end{bsmallmatrix}, \begin{bsmallmatrix}
                1
            \end{bsmallmatrix}\) & 3 \\
            
            
            \(\k^\times \times (\GL(2, \k))^2\) & \(\begin{smallpmatrix}
                1 & a & a & b & b \\
                a^{-1} & 1 & 1 & c & c\\
                a^{-1} & 1 & 1 & c & c\\
                b^{-1} & c^{-1} & c^{-1} & 1 & 1 \\
                b^{-1} & c^{-1} & c^{-1} & 1 & 1
            \end{smallpmatrix}\) & \(\langle e \rangle\) & \( \begin{bsmallmatrix}
                1
            \end{bsmallmatrix}, \begin{bsmallmatrix}
                1 & 2 \\
                3 & 4
            \end{bsmallmatrix}\) & 3\\
            
            
            \((\k^\times \times (\GL(2, \k))^2) \rtimes \mathfrak{S}_2\) & \(\begin{smallpmatrix}
                1 & a & a & b & b \\
                a^{-1} & 1 & 1 & -1 & -1\\
                a^{-1} & 1 & 1 & -1 & -1\\
                b^{-1} & -1 & -1 & 1 & 1 \\
                b^{-1} & -1 & -1 & 1 & 1
            \end{smallpmatrix}\) & \( \mathfrak{S}_2\) & \(\begin{bsmallmatrix}
                1
            \end{bsmallmatrix},
            \begin{bsmallmatrix}
                1 & 2 \\
                2 & 1
            \end{bsmallmatrix}\) & 3 \\
            
            
            \((\k^{\times})^2 \times \GL(3, \k)\) & \(\begin{smallpmatrix}
                1 & a & b & b & b \\
                a^{-1} & a & c & c & c \\
                b^{-1} & c^{-1} & 1 & 1 & 1 \\
                b^{-1} & c^{-1} & 1 & 1 & 1 \\
                b^{-1} & c^{-1} & 1 & 1 & 1
            \end{smallpmatrix}\) & \(\langle e \rangle\) & \(\begin{bsmallmatrix}
                1 & 2 \\
                3 & 4
            \end{bsmallmatrix}, \begin{bsmallmatrix}
                1
            \end{bsmallmatrix}\) & 3 \\
            
            
            \(((\k^{\times})^2 \times \GL(3, \k))
                \rtimes \langle (1\,2)\rangle\) & \(\begin{smallpmatrix}
                1 & -1 & a & a & a \\
                -1 & 1 & a & a & a \\
                a^{-1} & a^{-1} & 1 & 1 & 1 \\
                a^{-1} & a^{-1} & 1 & 1 & 1 \\
                a^{-1} & a^{-1} & 1 & 1 & 1 
            \end{smallpmatrix}\) & \(\langle (1\,2)\rangle\) & \(\begin{bsmallmatrix}
                1 & 2 \\
                2 & 1
            \end{bsmallmatrix}, \begin{bsmallmatrix}
                1
            \end{bsmallmatrix}\) & 3 \\
            
            
            \(\begin{matrix}
                \GL(2, \k) \times
                \GL(3, \k)
            \end{matrix}\) & \(\begin{smallpmatrix}
                1 & 1 & a & a & a \\
                1 & 1 & a & a & a \\
                a^{-1} & a^{-1} & 1 & 1 & 1 \\
                a^{-1} & a^{-1} & 1 & 1 & 1 \\
                a^{-1} & a^{-1} & 1 & 1 & 1
            \end{smallpmatrix}\) & \(\langle e \rangle\) & \(\begin{bsmallmatrix}
                1 \\
            \end{bsmallmatrix}, \begin{bsmallmatrix}
                1 \\
            \end{bsmallmatrix}\) & 3\\
            
            
            \( \k^\times \times \GL(4, \k)\) & \(\begin{smallpmatrix}
                1 & a & a & a & a \\
                a^{-1} & 1 & 1 & 1 & 1 \\
                a^{-1} & 1 & 1 & 1 & 1 \\
                a^{-1} & 1 & 1 & 1 & 1 \\
                a^{-1} & 1 & 1 & 1 & 1
            \end{smallpmatrix}\) & \(\langle e \rangle \times \langle e \rangle\) & \(\begin{bsmallmatrix}
                1
            \end{bsmallmatrix}, \begin{bsmallmatrix}
                1
            \end{bsmallmatrix}\) & 3 \\
            
            
            \(\begin{matrix}
                \GL(5, \k)
            \end{matrix}\) & \(\begin{smallpmatrix}
                1 & 1 & 1 & 1 & 1\\
                1 & 1 & 1 & 1 & 1\\
                1 & 1 & 1 & 1 & 1 \\
                1 & 1 & 1 & 1 & 1 \\
                 1 & 1 & 1 & 1 & 1
            \end{smallpmatrix}\) & \(\langle e \rangle\) & \(\begin{bsmallmatrix}
                1
            \end{bsmallmatrix}\)\rule[-18pt]{0pt}{0pt} & 2 \\
            \hline
        \end{longtable}
    \endgroup
       \end{footnotesize}

\vspace{0ex}

\subsection*{Dimension six}
We now assume $\dim V =6$.
There are total of 65
graded automorphism groups.
There are 11 partitions of \(6\),
with the partition \( (1\ 1 \ 1 \ 1 \ 1 \ 1) \) corresponding to monomial automorphisms.
The next table provides the number
of groups
        corresponding to the fixed block decomposition
        given by each partition.
        We give the monomial groups explicitly
        below.
\vspace{-0.2cm}
\begin{table}[h!]
    \centering
    \caption{Graded Automorphism Groups for \(\dim V = 6\) organized by partition}
    \begin{tabular}{|c|c|c|c|c|c|c|c|c|c|c|c|}
        \hline
        $\lambda$ & $(1^6)$ & $(1^4\,2)$ & $(1^2\,2^2)$ & $(2^3)$ & $(1^3\,3)$ & $(1\,2\,3)$ & $(3^2)$ & $(1^2\,4)$ & $(2\,4)$ & $(1\,5)$ & $(6)$\rule{0ex}{2.5ex} \\ 
        \hline
        count & 36 & 9 & 4 & 4 & 4 & 1 & 2 & 2 & 1 & 1 & 1 \\
        \hline
    \end{tabular}
    \vspace{-0.5cm}
\end{table}
\newline
For $|\k|=3$
there are
$21$ monomial
automorphism groups; they have 
stabilizing permutation groups
\begin{align*}
    & \langle e\rangle,\ 
    \langle(1\, 2)\rangle,\ 
    \langle(1\, 2)(3\, 4)(5\, 6)\rangle,\ 
    \langle(1\, 2)(3\, 4)\rangle,\ 
    \langle(1\, 2)(3\, 4), (5\, 6)\rangle,\ 
    \langle(1\, 2), (3\, 4)\rangle,\ \langle(1\, 5)(3\, 4), (5\, 6)(2\, 4)\rangle, \\
    & \langle(1\, 3)(2\, 4), (1\, 2)(3\, 4)(5\, 6)\rangle,\ 
    \langle(1\, 2), (2\, 3)\rangle,\ 
    \langle(1\, 2\, 3\, 4)(5\, 6), (2\, 4)\rangle,\ \langle(1\, 2\, 3\, 4), (2\, 4)\rangle,\ \langle(1\, 2\, 3), (3\, 4)\rangle, \\
    &
    \langle(1\, 5)(2\, 4), (1\, 2\, 3\, 4\, 5)\rangle,\ \langle(1\, 2), (2\, 3), (4\, 5)\rangle,\ \langle(1\, 2\, 3\, 4), (4\, 5)\rangle,\ \langle(1\, 2\, 3\, 4\, 5\, 6), (1\, 3)\rangle,\,
    \langle(1\, 2), (2\, 3\, 4)(5\, 6)\rangle, \\
    & 
    \langle(1\, 2\, 3\, 4\, 5\, 6), (1\, 6\, 3\, 4\, 5\, 2)\rangle,\ \langle(1\, 2\, 3\, 4\, 5\, 6), (1\, 3\, 4\, 6), (2\, 5)\rangle,\ \langle(1\, 2\, 3\, 4), (1\, 3), (5\, 6)\rangle,\
    \langle(1\, 2\, 3\, 4\, 5), (5\, 6)\rangle
    .
\end{align*}
For 
$|\k| \geq 4$ and 
$\text{char } \k=2$,
there are $12$ monomial 
automorphism groups: 
$4$ groups arise from  the \(|\k| = 3\) case
and have stabilizing permutation groups
$$
\langle e\rangle,\ \langle(1\, 2)(3\, 4)\rangle,\ \langle(1\, 2)(3\, 4)(5\, 6)\rangle,\ 
\langle(1\, 2\, 3)(4\, 5\, 6), (1\, 2)(4\, 5) \rangle,
$$
and $8$ additional  groups
have stabilizing permutation groups
\begin{align*}
    & \langle(1\, 2\, 3)\rangle,\
\langle(1\, 2\, 3)(4\, 5\, 6)\rangle,\
\langle(1\, 2\, 3\, 4)(5\, 6)\rangle,\
\langle(1\, 2\, 3\, 4)\rangle, \\
& \langle(1\, 2\, 3\, 4\, 5)\rangle,\
\langle(1\, 2\, 3\, 4\, 5\, 6)\rangle,\
\langle(1\, 2\, 3), (4\, 5\, 6)\rangle,\
\langle(1\, 2)(3\, 4)(5\, 6), (1\, 3\, 5)\rangle
\, .
\end{align*}
\noindent
For \(|\k| \geq 5\) and 
 $\text{char } \k \neq 2$,
there are $36$
monomial automorphism groups:
$21+8=29$ with stabilizing permutation groups
as in the cases \(|\k| = 3\)
and \(|\k| = 4\) 
and $7$ with stabilizing permutation groups
\begin{align*}
    & \langle(1\, 2\, 3)(4\, 5\, 6), (3\, 4)(2\, 5)(1\, 6)\rangle,\ 
    \langle(1\, 2\, 3)(4\, 5)\rangle,\
    \langle(1\, 2), (3\, 4), (5\, 6)\rangle,\ 
    \langle(1\, 2), (3\, 4\, 5\, 6)\rangle, \\
    & \langle(1\, 2\, 3), (4\, 5\, 6), (4\, 5)\rangle,\ \langle(1\, 2\, 3\, 4\, 5\, 6), (1\, 4)\rangle,\
    \langle(2\, 3), (1\, 2\, 3)(4\, 5\, 6), (5\, 6)\rangle
    \, .
\end{align*}

\subsection*{Dimension seven}
We now assume $\dim V =7$. There are a total of 105
graded automorphism groups.
These arise from the $15$ partitions of \(7\),
with the partition \((1\ 1 \ 1 \ 1 \ 1 \ 1 \ 1) \) corresponding to monomial automorphism groups.
The next two tables provides a count of the number
of graded automorphism groups corresponding to the fixed block decomposition given by each partition.

\begin{table}[h!]
    \centering
    \caption{Graded Automorphism Groups for \(\dim V = 7\) organized by partition}
    \begin{tabular}{|c|c|c|c|c|c|c|c|c|c|c|c|}
        \hline
        $\lambda$ & $(1^7)$ & $(1^5\,2)$ & $(1^3\,2^2)$ & $(1\,2^3)$ & $(1^4\,3)$ & $(1^2\,2\,3)$ & $(2^2\,3)$ & $(1\,3^2)$\rule{0ex}{2.5ex} \\ 
        \hline
        count & 53 & 14 & 8 & 4 & 9 & 2 & 2 & 2 \\
        \hline
    \end{tabular}
  \rule[-3ex]{0ex}{8ex}
    \begin{tabular}{|c|c|c|c|c|c|c|c|c|c|c|c|}
        \hline
        $\lambda$ & $(1^3\,4)$ & $(1\,2\,4)$ & $(3\,4)$ & $(1^2\,5)$ & $(2\,5)$ & $(1\, 6)$ & $(7)$\rule{0ex}{2.5ex} \\ 
        \hline
        count & 4 & 1 & 1 & 2 & 1 & 1 & 1 \\
        \hline
    \end{tabular}
\end{table}
\noindent
For \(|\k| = 3\),\! there are $31$ 
monomial automorphism groups;\! they have 
stabilizing permutation groups 
\begin{align*}
    & \langle e\rangle,\ 
    \langle(1\, 2)\rangle,\ 
    \langle(1\, 2)(3\, 4)\rangle,\ 
    \langle(1\, 2)(3\, 4)(5\, 6)\rangle,\ \langle(1\, 2), (3\, 4)\rangle,\ 
    \langle(1\, 3)(2\, 4), (1\, 2)(3\, 4)(5\, 6)\rangle,\ \langle(1\, 2), (2\, 3)\rangle,
    \\
    & \langle(1\, 2)(3\, 4), (5\, 6)\rangle,\  \langle(1\, 2)(4\, 5), (2\, 3)(5\, 6)\rangle,\ 
    \langle(1\, 3), (1\, 2\, 3\, 4)\rangle,\
    \langle(1\, 2), (3\, 4), (5\, 6)\rangle,\ \langle(1\, 3), (1\, 2\, 3\, 4)(5\, 6)\rangle,
    \\
    &
    \langle(1\, 2)(3\, 4), (1\, 4)(5\, 2)\rangle,\ 
    \langle(1\, 2), (2\, 3), (4\, 5)(6\, 7)\rangle,\ 
    \langle(1\, 2), (2\, 3), (4\, 5)\rangle,\  \langle(1\, 6)(3\, 5), (1\, 2)(3\, 4)(5\, 6)\rangle,
    \\
    &
    \langle(1\, 2\, 3\, 4\, 5\, 6\, 7), (1\, 7)(2\, 6)(3\, 5)\rangle,\ 
    \langle(1\, 3), (1\, 2\, 3\, 4), (5\, 6)\rangle,\ \langle(1\, 2)(3\, 5), (1\, 2\, 3\, 4\, 5), (6\, 7)\rangle,\ 
    \langle(1\, 2\, 3), (3\, 4)\rangle,
    \\
    &
    \langle(1\, 2), (3\,4) (5\,6) (5\,7)\rangle,\ \langle(1\, 2), (1\, 2\, 3\, 4\, 5)\rangle,\ \langle(1\,2), (1\, 3), (4\, 5), (4\, 6)\rangle,\ 
    \langle(1\, 2\, 3\, 4), (1\, 4), (5\, 6)\rangle,
    \\
    &
    \langle(1\, 2)(3\, 4), (1\, 5\, 4\, 6)(2\, 3), (1\, 4)\rangle,\ \langle(1\, 3), (1\, 2\, 3\, 4), (5\, 6), (6\, 7)\rangle ,\ 
    \langle(1\, 2)(3\, 4)(5\, 6), (2\, 3), (4\, 5)\rangle,
    \\
    &
    \langle(3\, 4\, 5\, 6)(1\, 7), (1\, 2)(3\, 4\, 5)\rangle,\ \langle(1\, 2\, 3)(4\, 5)(6\, 7), (1\, 2\, 4)(3\, 5)\rangle,\ 
\langle(1\, 2\, 3\, 4\, 5), (5\, 6)\rangle,\ 
\langle(1\, 2\, 3\, 4\, 5\, 6), (6\, 7)\rangle
\, . 
\end{align*}
For $|\k|=4$,
there are $16$ 
monomial automorphism groups:
$4$ 
from the \(|\k| = 3\) case, namely,
those with stabilizing permutation groups
$$\langle e\rangle,\ \langle(1\, 2)(3\, 4)\rangle,\ \langle(1\, 2)(3\, 4)(5\, 6)\rangle,\ \langle(1\, 2\, 3)(4\, 5\, 6), (1\, 2)(4\, 5)\rangle,
$$
and $12$ additional 
groups
with stabilizing permutation groups
\begin{align*}
   & \langle(1\, 2\, 3)\rangle,\ 
\langle(1\, 2\, 3)(4\, 5\, 6)\rangle,\ 
\langle(1\, 2\, 3\, 4)\rangle,\ 
\langle(1\, 2\, 3\, 4)(5\, 6)\rangle,\ 
\langle(1\, 2\, 3\, 4\, 5)\rangle, \
\langle(1\, 2\, 3)(4\, 5)(6\, 7)\rangle,\
\langle(1\, 2\, 3\, 4\, 5\, 6)\rangle,
\\
 & 
\langle(1\, 2\, 3\, 4\, 5\, 6\, 7)\rangle,\
\langle(1\, 2\, 3), (4\, 5\, 6)\rangle,\ \langle(1\, 2\, 3\, 4)(5\, 6\, 7)\rangle,\  \langle(1\, 3\, 5), (1\, 2\, 3\, 4\, 5\, 6)\rangle,\ 
\langle(1\, 2\, 3\, 4\, 5\, 6\, 7), (2\, 3\, 5)(4\, 7\, 6)\rangle
\, .
\end{align*}
For
\(|\k| \geq 5\),
there are $53$
monomial
automorphism groups:
$31+12=43$ with stabilizing permutation groups
as in the cases \(|\k| = 3\)
and \(|\k| = 4\) 
above 
and $10$ additional groups
with
types
\begin{align*}
& \langle(1\, 6)(2\, 5)(3\, 4), (1\, 2\, 3\, 4\, 5\, 6)\rangle,\
\langle(1\, 3\, 5)(2\, 4)\rangle,\
\langle(1\, 2), (3\, 4\, 5\, 6)\rangle,\ 
\langle(1\, 2\, 3\, 4\, 5)(6\, 7)\rangle,
\\
& \langle(1\, 2), (3\, 4\, 5), (6\, 7)\rangle,\ 
\langle(1\, 2), (2\, 3), (4\, 5\, 6)\rangle,\ 
\langle(1\, 2\, 3)(4\, 5\, 6), (1\, 4)\rangle,
\\
& \langle(1\, 3), (1\, 2\, 3\, 4), (5\, 6\, 7)\rangle,\ 
\langle(1\, 2\, 3\, 4), (5\, 6), (6\, 7)\rangle,\
\langle(1\, 2\, 3), (4\, 5\, 6), (6\, 7)\rangle
\, .
\end{align*}



\vspace{1ex}

\section{Acknowledgments}
The authors thank James J. Zhang for generously
sharing his expertise and for
helpful discussions on this topic.


\vspace{1ex}

\section{Declarations}

The authors declare that the data supporting the 
findings of this 
article are available within the paper.
The authors have no relevant financial or non-financial interests to
disclose, nor any relevant conflicts of interest to disclose.
No research for the article involved human or animal participants.
The second author was partially supported by Simons grants 
  429539 and 949953.

\end{document}